\documentclass[reqno,11pt]{amsart}
\usepackage{amsthm,amsfonts,amssymb,euscript,mathrsfs,graphics,color,amsmath,amssymb,latexsym,marginnote, bigints}
\usepackage{soul}
\usepackage{graphicx}
\usepackage{multicol,bbm}
\usepackage{stmaryrd,empheq}
\usepackage{amsthm,amsfonts,amssymb,euscript,mathrsfs,graphics,xcolor,amsmath,amssymb,latexsym,marginnote,esint}
\usepackage{subcaption}
\usepackage[hidelinks]{hyperref}
\usepackage{comment}
\usepackage[margin=1.0in]{geometry}
\numberwithin{equation}{section}

\def\al{\alpha}

\def\R{\mathbb{R}}

\def\bar{\overline}

\definecolor{bluegreen}{rgb}{0.0, 0.3, 0.9}

\usepackage[normalem]{ulem}

\newtheorem{theorem}{Theorem}[section]
\newtheorem{lemma}[theorem]{Lemma}
\newtheorem{proposition}[theorem]{Proposition}
\newtheorem{corollary}[theorem]{Corollary}
\newtheorem{definition}[theorem]{Definition}

\title[]{Mixing Zone Dynamics for the Saffman--Taylor Instability with Viscosity Jump}
\author{Paula Luna-Velasco and Francisco Mengual}

\begin{document}

\begin{abstract}
We construct mixing solutions to the incompressible porous media equation for fully unstable Muskat initial data. The mixing zone expands linearly in time around an evolving central interface. Our approach combines the $h$-principle with the inversion of the associated Birkhoff–Rott operator, required when the viscosities differ. A key ingredient is to formulate this inversion in terms of the Saffman–Taylor function rather than the vorticity. This considerably simplifies the analysis and may be of independent interest for the Muskat problem.
\end{abstract}

\maketitle
 
\setcounter{tocdepth}{1}
\tableofcontents


\section{Introduction and main results}


We study the Saffman--Taylor instability \cite{SaffmanTaylor58}, which refers to the formation of finger-like patterns at the interface between two incompressible fluids in a porous medium \cite{EscherMatioc11}. The interface dynamics is modeled by the Muskat problem \cite{Muskat37}, originally introduced to describe oil-water motion through sand in the petroleum industry, and is closely related to Hele--Shaw models \cite{HeleShaw1898}.

In this work, we follow the program of modelling hydrodynamic instabilities through convex integration, addressing density and viscosity jumps in \eqref{IPM} for fully unstable Muskat initial interfaces. This allows us to circumvent the classical ill-posedness and identify the macroscopic evolution of the mixing zone.

\subsection{Incompressible porous media equation \eqref{IPM}}
We consider a multiphase incompressible fluid in a porous medium under the action of gravity.
The density $\rho=\rho(t,x)$ and viscosity $\mu=\mu(t,x)$ are transported by the flow:
\begin{equation}\label{transport}
\partial_t a+u\cdot\nabla a=0,\qquad a=\rho,\mu.
\end{equation}
The velocity field $u=u(t,x)$ satisfies the incompressibility condition:
\begin{equation}\label{incompressibility}
\nabla\cdot u =0,
\end{equation}
together with Darcy's law:
\begin{equation}\label{Darcy}
\frac{\mu}{\kappa}u
=-\nabla p-g\rho e_2,
\end{equation}
where $p=p(t,x)$ denotes the pressure, $e_2=(0,1)$ is the vertical unit vector, and $\kappa,g>0$ are the permeability of the porous medium and the gravitational constant, respectively.

For a two-phase flow with densities $\rho_+>\rho_->0$ and viscosities $\mu_+,\mu_->0$, it is convenient to introduce a phase variable $\theta(t,x)$, taking values $+1$ and $-1$, according to whether, at time $t\geq 0$, the pores near $x\in\R^2$ are occupied by the $+$ phase or the $-$ phase:
\begin{equation}\label{atheta}
a(t,x)=\frac{a_++a_-}{2}+\frac{a_+-a_-}{2}\theta(t,x),\qquad
a=\rho,\mu.
\end{equation}
Substituting \eqref{atheta} into the transport equation \eqref{transport}, we find that the latter is equivalent to the phase  being transported by the flow. 
Similarly, Darcy's law \eqref{Darcy} can be rewritten in terms of the phase and the Atwood numbers
$$
A_\rho:=\kappa g\frac{\rho_+-\rho_-}{\mu_++\mu_-}>0,\qquad
A_\mu:=\frac{\mu_+-\mu_-}{\mu_++\mu_-}\in (-1,1).
$$
By rescaling time and redefining the velocity and pressure as necessary, this can be normalized so that $A_\rho=1$. Henceforth, we write $A:=A_\mu$.
Notice that $A=0$ corresponds to $\mu_+=\mu_-$.

The resulting system is the two-phase \emph{incompressible porous media equation}:
\begingroup
\renewcommand{\theequation}{IPM}
\begin{subequations}\label{IPM}
\begin{align}
\partial_t\theta+u\cdot\nabla\theta
&=0,\tag{IPMa}\label{IPMa}\\
\nabla\cdot u
&=0,\tag{IPMb}\label{IPMb}\\
u+A\theta u
&=-\nabla p-\theta e_2,\tag{IPMc}\label{IPMc}
\intertext{coupled with the pointwise constraint}
|\theta(t,x)|&=1.\tag{IPMd}\label{IPMd}
\end{align}
\end{subequations}
\endgroup

In this system, the velocity and pressure are determined in terms of the phase by the incompressibility condition and Darcy's law \eqref{IPMb}-\eqref{IPMc}. Thus, $\theta$ is the active scalar governing the evolution and is prescribed by an initial condition
\begin{equation}
\theta|_{t=0}=\theta^\circ.
\end{equation}

\subsection{Muskat equation}
It follows directly that stratified profiles are stationary solutions
$$
\theta=\theta^\circ(x_2),
\qquad
u=0,
\qquad
\partial_2 p=-\theta^\circ.
$$
The simplest two-phase stratified flows correspond to the Riemann initial data
\begin{equation}\label{Riemann}
\theta_{\mathrm{s}}^\circ(x_2)=
\begin{cases}
-1, & x_2>0,\\
+1, & x_2<0,
\end{cases}
\qquad
\theta_{\mathrm{u}}^\circ(x_2)=
\begin{cases}
+1, & x_2>0,\\
-1, & x_2<0.
\end{cases}
\end{equation}
In both configurations, the fluids are separated by the flat interface $z^\circ(\alpha)=(\alpha,0)$, with the denser fluid lying below in $\theta_{\mathrm{s}}^\circ$ and above in $\theta_{\mathrm{u}}^\circ$. As we discuss next, these stationary solutions are respectively stable and unstable under perturbations of the interface.

Assuming that the two fluids remain separated by a sufficiently
regular interface $z=z(t,\alpha)$, the \eqref{IPM} system reduces to an interface evolution problem known as the \emph{Muskat equation}:
\begin{align}
(\partial_t z-U)\cdot\partial_\alpha z^\perp&=0,
&\varpi&=-\llbracket\theta\rrbracket(AU+e_2)\cdot\partial_\alpha z,\tag{M$\varpi$}\label{Mvarpi:intro}
\intertext{where $\varpi$ is the vorticity density, $U=U[\varpi,z]$ is the
mean velocity trace on the interface, and
$\llbracket\theta\rrbracket
=\theta_{\mathrm{right}}-\theta_{\mathrm{left}}$
is the phase jump across the oriented curve $z$.
In this paper, we obtain an equivalent formulation in terms
of the Saffman--Taylor function $\sigma$:}
(\partial_tz +Q)\cdot\partial_\alpha z^\perp&=0,
&\sigma&=-\llbracket\theta\rrbracket(AQ-e_2)\cdot\partial_\alpha z^\perp,\tag{M$\sigma$}\label{Msigma:intro}
\end{align}
where $Q=Q[\sigma,z]$ is the mean pressure-gradient trace
on the interface.
Both formulations, their equivalence, and the Birkhoff--Rott
representations of $U$ and $Q$ are developed in Section~\ref{section3}.

Although $\sigma$ is commonly called the Rayleigh--Taylor
function, we use the term Saffman--Taylor to distinguish
the porous-media setting from its Euler counterpart.

The stability of the interface depends on the sign of the Saffman--Taylor function:
\[
\begin{array}{c@{\qquad\qquad}c}
\text{stable} & \text{unstable}\\
\sigma>0     & \sigma\le0
\end{array}
\]
For the flat interface,
$\sigma=\llbracket\theta\rrbracket$, recovering the stability
classification of the Riemann data \eqref{Riemann}.
For $A=0$, $\sigma=\llbracket\theta\rrbracket\partial_\alpha z_1$,
so stability is again determined by the position of the denser fluid.
When $A\ne0$, stability also depends on the flow, with both
$\varpi$ and $\sigma$ determined by implicit, nonlocal relations,
making the analysis more delicate.

\subsection{Background}
We briefly review the literature on both regimes to place our result in context.

In the stable regime, local well-posedness for 2D
flows with density and viscosity jumps was established for
uniformly positive $\sigma$, including periodic and non-graph
interfaces \cite{CCG11}.
Subsequent works lowered the regularity requirements
(see e.g.~\cite{Matioc19,CordobaLazar21}), reaching critical regularity
for equal viscosities
\cite{AlazardNguyen21a,AlazardNguyen21b,AlazardNguyen23}.
For global results with small data, see
\cite{CCGS13,CCGRPS16,CGS17,Cameron19,GGJPS19,CordobaLazar21,Nguyen22},
and with large monotone data, see \cite{DLL17}.
Related results cover domains with boundaries
\cite{CGO14,Granero14,NguyenPausader20,Zlatos26}, non-asymptotically flat interfaces \cite{GGNP22,Sanchez25,Zlatos26}, and the desingularization of small moving corners \cite{GGHP24}.
Finite-time singularity scenarios include initially stable
interfaces that overturn \cite{CCFGL12} and subsequently lose
smoothness \cite{CCFG13}, and singularity formation in the half-plane
\cite{Zlatospp}.

In the unstable regime, the classical Muskat problem is ill-posed in Sobolev spaces \cite{SCH04,GGPS25},
reflecting the Saffman--Taylor instability.
With surface tension, see \cite{Nguyen20} for local
well-posedness and \cite{GGPS25} for global results on Muskat bubbles.

A first macroscopic description was obtained through a Lagrangian
relaxation, which predicts a linearly growing mixing zone for the
flat interface \cite{Otto99}.
Within the differential-inclusion framework, the relaxation of \eqref{IPM}
was explicitly computed for $A=0$, and convex integration was used
to construct infinitely many mixing solutions \cite{Szekelyhidi12} (see also \cite{CFG11,Shvydkoy11,Knott13,HitruhinLindberg21}).
Remarkably, the two relaxation approaches agree when selecting
the subsolution with the largest mixing zone.

Beyond the flat configuration, mixing solutions for $A=0$ were
constructed for fully unstable $H^5$ interfaces using continuous
subsolutions \cite{CCF21} (see also \cite{ACF21}), and for
interfaces in weighted $C^{3,\alpha}$ spaces using piecewise-constant
subsolutions \cite{ForsterSzekelyhidi18}
(see also \cite{NoisetteSzekelyhidi21}).
The approach has also been applied to localized mixing for bubbles and turned interfaces \cite{CFM22}. Recently, the Lagrangian relaxation has been extended to non-flat analytic interfaces \cite{CFG25}.

For different viscosities, $|A|<1$, the $h$-principle and mixing
solutions for the flat interface were obtained in \cite{Mengual22},
preserving the connection with the Lagrangian relaxation. In this work, we take the opportunity to complement this result
by establishing the exact relaxation in Appendix~\ref{AppendixA}.

This strategy of modelling turbulence zones through convex
integration has also been applied to the Rayleigh--Taylor
\cite{GKS21,GebhardKolumban22,GHK24,GebhardKolumban25}, Kelvin--Helmholtz \cite{Szekelyhidi11,MengualSzekelyhidi23,GebhardKolumban22b}, and Widnall \cite{GHM25} instabilities.

\subsection{Main result}

In this work, we construct mixing solutions for the two-phase \eqref{IPM} equation with different densities and viscosities in the fully unstable regime.
Unlike the equal-viscosity case, the viscosity jump requires inverting an implicit relation, usually formulated in terms of vorticity in the classical Muskat problem \eqref{Mvarpi:intro}. In the mixing setting, this approach also involves interior vorticity coupled to the correction of the macroscopic momentum.

A key ingredient of our construction is to work instead with the pressure and the Saffman–Taylor boundary densities. The pressure remains harmonic in each region, and the resulting boundary conditions allow us to determine these densities without introducing the interior vorticity as an independent unknown. This formulation also yields an equivalent description \eqref{Msigma:intro} of the classical Muskat problem that may be of independent interest. We explain these ideas in Section~ \ref{section3}.

\begin{theorem}\label{thm:main}
Let $A\in(-1,1)$, and let $z^\circ$ be a chord--arc curve
satisfying $z^\circ-(\alpha,0)\in H^5(\mathbb{R};\mathbb{R}^2)$.
Assume that its Saffman--Taylor function $\sigma^\circ$
is uniformly negative.
Then there exists $T>0$ such that infinitely many mixing
solutions of the \eqref{IPM} system on $[0,T]\times\R^2$ emanate from
the corresponding unstable Muskat initial data.
\end{theorem}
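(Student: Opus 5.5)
The construction follows the standard route for mixing solutions: build a subsolution and then apply the $h$-principle. Concretely, we invoke the $h$-principle for \eqref{IPM} with $|A|<1$ from \cite{Mengual22}, whose relaxation is completed in Appendix~\ref{AppendixA}. It says that any \emph{admissible subsolution} $(\bar\theta,\bar u,\bar m)$ gives rise to infinitely many weak solutions. Admissibility means: the subsolution solves the relaxed linear system with $\bar m$ the relaxed flux $\theta u$; it takes values in the interior of the hull $U$ on an open mixing zone $\Omega_{\mathrm{mix}}(t)$; and it coincides with $\pm1$ stratified states outside that zone. The solutions so produced satisfy $|\theta|=1$ a.e., agree with $\bar\theta$ outside $\Omega_{\mathrm{mix}}$, and mix in every open subset of it. The theorem is therefore reduced to constructing, for the given $H^5$ interface $z^\circ$, a subsolution on $[0,T]$ whose mixing zone has the form
\begin{equation*}
\Omega_{\mathrm{mix}}(t)=\{z(t,\alpha)+\lambda\,\partial_\alpha z^\perp(t,\alpha)/|\partial_\alpha z|:\ |\lambda|<c\,t\}.
\end{equation*}
It should grow linearly around an evolving central interface $z(t)$ with $z(0)=z^\circ$.

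The ansatz is that of \cite{CCF21}, modified for the viscosity jump. Inside the zone we take $\bar\theta$ to be affine in the transversal variable $\lambda/(ct)$ and set $\bar m=\bar\theta\bar u-\gamma(1-\bar\theta^2)(e_2+\cdots)$, with a relaxation coefficient chosen strictly inside the hull constraint. With this choice \eqref{IPMa} holds in the relaxed sense up to an error that we absorb by choosing the growth speed $c$ and a small correction. The nonlocal part is the relaxed Darcy law, which is the real difficulty since $A\neq0$ makes $(1+A\bar\theta)\bar u$ implicit. Following Section~\ref{section3}, we do not solve for the vorticity. Instead we write the pressure gradient as the relaxed counterpart of $Q$: a Birkhoff--Rott type integral over the layered zone of a Saffman--Taylor density $\sigma$. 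The velocity is then recovered by dividing by $1+A\bar\theta$. The unknown density satisfies an equation of the form $(I+A\,\mathcal{K}[z,t])\sigma=F[z]$, with $\mathcal{K}$ a smoothed (averaged over the layer) version of the operator in \eqref{Msigma:intro}. The central curve then evolves by $(\partial_t z+Q)\cdot\partial_\alpha z^\perp=0$ with $Q=Q[\sigma,z,t]$ averaged over $\lambda$.

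The steps, in order, are as follows.
\begin{enumerate}
\item Prove invertibility of $I+A\mathcal{K}$ on $H^k$ for $|A|<1$. At $t=0$ this is the chord--arc Muskat inversion, via the $L^2$ boundedness of the double-layer/Cauchy operator and a Rellich-type estimate. For small $t>0$ it follows by perturbation, with bounds uniform in the thickness $ct$.
\item Set up the evolution of $z$ as a fixed point in $C([0,T];H^{k})$. The layer averaging regularizes the kernel at scale $t$. The resulting $t^{-1}$ loss is compensated by weighted-in-time norms, as in \cite{CCF21}. This works precisely because $\sigma^\circ<0$ uniformly: the linearization is then backward-parabolic for the sharp interface but forward-dissipative in the averaged sense, which yields a Cauchy--Kowalevski-free energy estimate on a short time interval. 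The $H^5$ regularity supplies room for the derivative losses in the layer expansion.
\item Verify that the remaining error in the relaxed transport equation is $O(t)$. Choose $c$ slightly smaller than the maximal admissible growth rate, which is dictated by $-\sigma$ (analogous to $2(1-A^2)$-type bounds in the flat case of \cite{Mengual22}). This strict inequality keeps the subsolution in the interior of the hull for $t\le T$. Positivity of $-\sigma(t)$ persists by continuity.
\end{enumerate}

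The main obstacle is step~2 coupled with step~1: obtaining inversion bounds for the layered operator that are uniform as $t\to0$ while the central curve only has the regularity given by the fixed point. It also requires showing that the $\sigma$-formulation makes the interior vorticity disappear, so that no additional unknown is needed. I would first attempt the estimates by expanding the kernel in $\lambda$ around the central curve. This splits $\mathcal{K}$ into the sharp Muskat operator plus commutator remainders of size $O(t\log t)$ in operator norm, after which a Neumann series closes.
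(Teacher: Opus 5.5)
There is a genuine gap, and it sits exactly at the step you flag as the main obstacle. In your route, step~2 carries the whole theorem, and the proposal gives no argument that closes it. You must solve exactly, on $[0,T]$, a nonlinear nonlocal evolution for the central curve in the fully unstable regime, coupled at every $t>0$ to an implicit density equation.

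The mechanism you invoke is not correct: averaging over a layer of width $\sim ct$ does not make the linearization dissipative. Take the flat linearization with $A=0$ and a transversally affine profile. There the Muskat growth rate $|\xi|$ is replaced by a rate comparable to $\min\{|\xi|,(ct)^{-1}\}$. This rate is still positive, and its integral over $[0,t]$ grows like $c^{-1}\log(ct|\xi|)$. High frequencies are therefore amplified by a power of $ct|\xi|$. That is a fixed loss of derivatives, not dissipation. So there is no energy estimate of the kind you describe, and a time-weighted fixed point in $C([0,T];H^k)$ does not close as stated. Controlling this loss is what the semiclassical calculus of \cite{CCF21} does for $A=0$, and you would have to redo it with the implicit viscosity coupling.

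Two further points are left open. First, \eqref{mIPMa} has to hold exactly. An $O(t)$ residual cannot remain in the transport equation; it must be put into $\overline m$ and then tested against the hull. Second, for $A\neq0$ the hull involves $|A\overline u+i|$, so you need a quantitative lower bound for it inside the zone (Proposition~\ref{p:nondegenerate} in the paper). Positivity of $-\sigma$ by continuity does not give this.

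The missing idea is that no evolution equation for $z$ needs to be solved. The paper takes $\overline\theta\in\{1,A,-1\}$ piecewise constant and builds a \emph{free} potential into the momentum, $\overline m=\overline\theta\,\overline u-\frac{1-\overline\theta^2}{1-A\overline\theta}\bigl(\nabla^\perp g+\frac12(A\overline u+i)\bigr)$. The pressure is then harmonic in each region. The boundary densities are explicit in the boundary motion, $\sigma_a=-(1-aA)(A\partial_tz_a+i)\cdot\partial_\alpha z_a^\perp$, so no inversion is needed at $t>0$. Phase conservation is imposed exactly through the tangential data $\partial_\alpha G_a$ and linear interpolation, and the hull reduces to $2|\nabla g|<|A\overline u+i|$.

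The central curve is just the polynomial $z^\circ+tz^{(1)}+\frac{t^2}{2}z^{(2)}$. The choice $z^{(1)}=-Q_0[\sigma^{(0)}]$ makes $\partial_\alpha G_+=\partial_\alpha G_-$ at $t=0$, and $z^{(2)}$ cancels the first time derivative of their difference. Both steps invert only $I-AP$ on the fixed initial curve (Proposition~\ref{p:inverse}: $P$ is compact, and a two-sided harmonic-energy argument rules out real eigenvalues $|\mu|>1$ of $P^\dagger$). Each order costs one derivative, which is where $H^5$ enters.

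The second-order matching yields $\partial_\lambda G=O(t^2)$, via the bounded primitives of $P$ and of its moving-curve versions (Lemmas~\ref{l:initialprimitive} and~\ref{l:temporalprimitive}). This is essential because $\nabla g$ contains $t^{-1}\partial_\lambda G$. It is combined with $(A\overline u+i)\cdot\tau^\perp=-\sigma^{(0)}/(2|\partial_\alpha z^\circ|)+O(t)$. Then the choice $c=-\overline c\,\sigma^{(0)}/|\partial_\alpha z^\circ|$ with $0<\overline c<\frac12$ gives the strict hull inequality for small $t$.
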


The mixing solutions constructed here retain the two pure phases
in the regions $\Omega_+(t)$ and $\Omega_-(t)$, separated by an
evolving mixing zone $\Omega_{\mathrm{mix}}(t)$, where both fluids
are present at arbitrarily small scales (see Definition~\ref{def:mixing-solution}). This strip lies between
two curves $z_\pm=z_\pm(t,\alpha)$
\[
  z_+=z+\frac{tc}{1-A}\tau^\perp,
  \qquad
  z_-=z-\frac{tc}{1+A}\tau^\perp,
\]
where $\tau=\partial_\alpha z^\circ/|\partial_\alpha z^\circ|$
is the initial unit tangent and $z=z(t,\alpha)$ is an evolving central curve
whose initial normal velocity agrees with the Muskat evolution (see Figure \ref{fig:muskat-fingering:z}).
For a fixed constant $0<\bar{c}<1$, we choose the opening rate
$c=c(\alpha)$ as
\[
  c=-\bar{c}\,
  \frac{\sigma^\circ}{|\partial_\alpha z^\circ|}
\]
where $\sigma^\circ=\sigma^\circ(\alpha)$ is the initial Saffman--Taylor function.
Thus, the mixing zone opens linearly in time at a rate proportional
to the strength of the initial instability. The viscosity jump
makes this opening asymmetric about $z$, with different speeds
on the two sides whenever $A\ne0$.

\begin{figure}[htbp]
    \centering
    \begin{subfigure}[t]{0.84\textwidth}
        \centering
        \includegraphics[width=\linewidth]{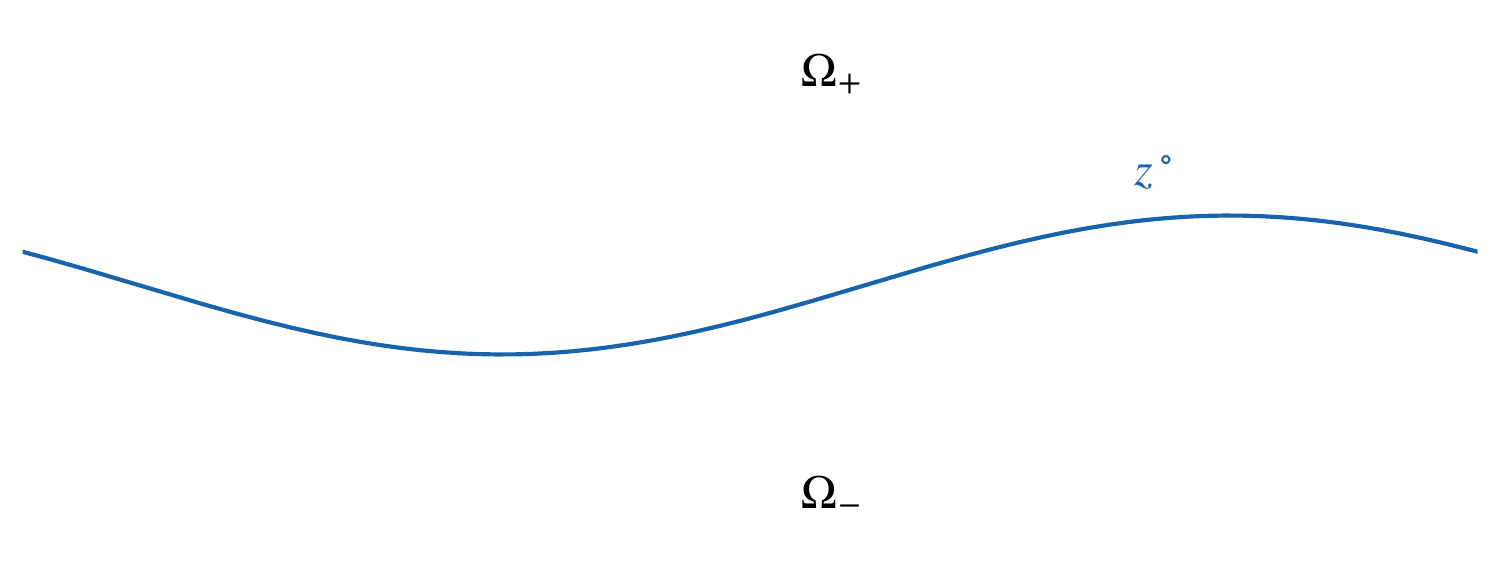}
        \caption{Initial interface $z^\circ$.}
        \label{fig:initial-interface}
    \end{subfigure}


    \begin{subfigure}[t]{0.84\textwidth}
        \centering
        \includegraphics[width=\linewidth]{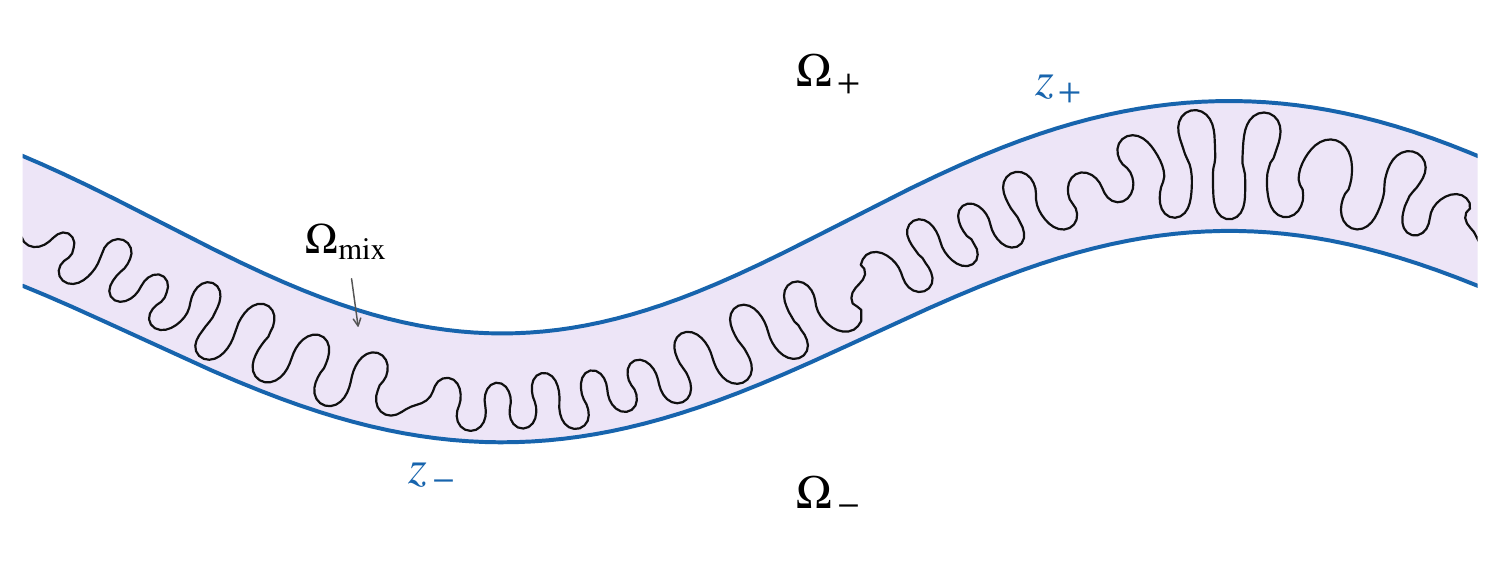}
        \caption{Mixing zone and Saffman--Taylor fingering.}
        \label{fig:mixing-fingers}
    \end{subfigure}

    \caption{%
(a) Initial interface
$z^\circ(\alpha)=(\alpha,-0.3\sin(\alpha-0.5))$. We use a periodic setting for numerical simplicity.
(b) Mixing zone (shaded violet) bounded by the curves
$z_+$ and $z_-$ (blue), for $A=0.3$ at $t=0.6$.
The fingers (black) result from a vortex-blob simulation
of a small multisine perturbation of $z^\circ$.
    }
    \label{fig:muskat-fingering:z}
\end{figure}
Inspired by spontaneous stochasticity in singular shear flows
\cite{TBM20}, we speculate that a suitable
subsolution may describe an ensemble average of flows arising
from vanishingly small initial perturbations, with its mixing
zone containing the Saffman--Taylor fingering region.
Figure~\ref{fig:muskat-fingering:z} illustrates this interpretation,
whose validity remains an open question.

\subsection{Organization of the paper.}
The remainder of the paper is organized as follows:
\begin{description}
  \item[Section~\ref{section2}]
  Introduces subsolutions and mixing solutions,
  recalls the $h$-principle, and states the relaxation results.

  \item[Section~\ref{section3}]
  Presents the Saffman--Taylor formulation of the Muskat problem
  and outlines the construction, starting with the flat-interface case.

  \item[Section~\ref{section4}]
  Defines the mixing zone and the macroscopic fields
  forming our subsolution.

  \item[Section~\ref{section5}]
  Derives the first- and second-order compatibility conditions
  governing the evolution of the mixing zone.

  \item[Section~\ref{section6}]
  Solves these conditions by inverting the Saffman--Taylor equation
  and establishes the regularity of the resulting coefficients.

  \item[Section~\ref{section7}]
  Proves the main theorem by verifying the strict subsolution
  condition and applying the $h$-principle.

  \item[Section~\ref{section8}]
  Collects the estimates for the Birkhoff--Rott and pressure
  operators used in the construction, including the inversion
  estimates required in Section~\ref{section6}.

  \item[Appendix~\ref{AppendixA}]
  Proves the relaxation results stated in Section~\ref{section2}.
\end{description}

\subsection{Notation}

\begin{itemize}
    \item We identify $\mathbb R^2$ with $\mathbb C$ via
$(v_1,v_2)\leftrightarrow v_1+iv_2$, so that $e_2$ corresponds to $i$.
We denote complex conjugation by $v^*$ and write $v^\perp=iv$.
Thus,
\[
v\cdot w=\operatorname{Re}(vw^*),
\qquad
\nabla^\perp g=(-\partial_2g,\partial_1g).
\]

    \item Interfaces are parametrized by $\alpha\in\mathbb R$.
    We suppress the time dependence when convenient. For asymptotically flat interfaces, Sobolev regularity
is understood for $z-(\alpha,0)$; for functions with a constant
far-field value $f_\infty$, it is understood for $f-f_\infty$.

    \item Across an oriented interface, we use the jump convention
\[
\llbracket f\rrbracket=f_{\mathrm{right}}-f_{\mathrm{left}},
\]
where the subscripts denote the right and left traces.
The indices $a,b=\pm1$ label the boundaries $z_\pm$
of the mixing zone, and $\llbracket f\rrbracket_a$
denotes the jump across $z_a$.

    \item Initial data carry a superscript $\circ$.
    Temporal derivatives at $t=0$ are denoted by
    \[
    f^{(j)}=\partial_t^j f\big|_{t=0}.
    \]
    In particular, $f^{(0)}$ denotes the initial value.

    \item We denote the Birkhoff--Rott operator by $B$, which determines
the mean velocity and pressure-gradient traces $U$ and $Q$.
The operator $P$ is obtained from the normal component of $Q$
along the initial interface.
See Section~\ref{section3}.
\end{itemize}


\section{H-principle and mixing solutions}\label{section2}
In this section, we recall the $h$-principle established in
\cite{Szekelyhidi12,Mengual22}. This yields mixing solutions
from suitable subsolutions. Therefore, our task in the subsequent sections
will be to construct such subsolutions.
In Section~\ref{subsec:relaxation}, we discuss the relaxation
underlying this construction.

The
\eqref{IPM} system can be reformulated as a differential inclusion by introducing a new variable $m$, which plays the role of the
momentum $\theta u$. By incompressibility, the transport
term can be written in divergence form as
$u\cdot\nabla\theta=\nabla\cdot(\theta u)$.
The linear part of the system is therefore
\begingroup
\renewcommand{\theequation}{mIPM}
\begin{subequations}\label{mIPM}
\begin{align}
\partial_t\theta+\nabla\cdot m
&=0,\tag{mIPMa}\label{mIPMa}\\
\nabla\cdot u
&=0,\tag{mIPMb}\label{mIPMb}\\
u+Am
&=-\nabla p-\theta i,
\tag{mIPMc}\label{mIPMc}
\end{align}
\end{subequations}
\endgroup
subject to the (nonlinear) pointwise constraint
$$
(\theta,u,m)\in K,
$$
where $K$ is the subset of
$\mathbb R\times\mathbb R^2\times\mathbb R^2$ defined by
\begin{equation}
m=\theta u,\qquad |\theta|=1.
\tag{$K$}\label{mIPMd}
\end{equation}

The relaxation of this differential inclusion consists of keeping
the same linear system \eqref{mIPM}, while replacing the pointwise constraint
$(\theta,u,m)\in K$ with
$$
(\bar\theta,\bar u,\bar m)\in\overline{\mathcal O},
$$
where $\overline{\mathcal O}$ denotes the closure in
$\mathbb R\times\mathbb R^2\times\mathbb R^2$ of the open set
$\mathcal O$ defined by
\begin{equation}
\left|2(1-A\theta)(m-\theta u)
+(1-\theta^2)(Au+i)\right|
<
(1-\theta^2)|Au+i|.
\tag{$\mathcal O$}\label{mathcalO}
\end{equation}
The precise meaning of this relaxation is discussed in
Section~\ref{subsec:relaxation}.

We now define subsolutions and (weak) solutions. We eliminate $p$ by taking the curl of Darcy's
law \eqref{mIPMc}. The pressure can subsequently be recovered by the Helmholtz decomposition.

\begin{definition}[Subsolution]
\label{definition_subsolution}
Given $T>0$, we say that a triple
\[
(\bar\theta,\bar u,\bar m)\in
C([0,T];L^\infty_{w^*}(\mathbb R^2;\mathbb R^5)),
\qquad |\bar\theta|\le1,
\]
is a \emph{subsolution} to \eqref{IPM} if it satisfies the linear
system \eqref{mIPM} in the sense of distributions and $(\bar\theta,\bar u,\bar m)\in\overline{\mathcal O}$
almost everywhere in $[0,T]\times\mathbb R^2$.

A subsolution is a \emph{solution} if $|\bar\theta|=1$
almost everywhere; in this case, the pointwise
constraint implies $\bar m=\bar\theta\bar u$.
\end{definition}

We recall the definition of mixing solution.
The mixing property \eqref{eq:mixing-property} was proved
in \cite{CCF21} and extended to every time slice in \cite{CFM19}.

\begin{definition}[Mixing solution]
\label{def:mixing-solution}
Let $\Omega_{\mathrm{mix}}\subset(0,T]\times\mathbb R^2$
be a nonempty relatively open set. A solution
$(\theta,u)$ to \eqref{IPM} is a \emph{mixing solution associated
with $\Omega_{\mathrm{mix}}$} if, for every $t\in(0,T]$,
$\Omega_{\mathrm{mix}}(t)$ is the largest open set such that
\begin{equation}\label{eq:mixing-property}
\int_B(1-\theta(t,x))\,dx
\int_B(1+\theta(t,x))\,dx>0
\end{equation}
for every nonempty bounded open set
$B\subset\Omega_{\mathrm{mix}}(t)$.
We denote by $\Omega_+(t)$ and $\Omega_-(t)$ the largest
open sets on which $\theta(t,\cdot)=1$ and
$\theta(t,\cdot)=-1$ almost everywhere, respectively.
\end{definition}

Finally, we recall the $h$-principle for the \eqref{IPM} system,
first proved for $A=0$ in \cite{Szekelyhidi12}, and later
extended to the case of viscosity jump in \cite{Mengual22}.

\begin{theorem}[$h$-principle for \eqref{IPM}]
\label{thm:h-principle}
Let $|A|<1$ and $T>0$. Let $\Omega_+$, $\Omega_-$,
and $\Omega_{\mathrm{mix}}$ be pairwise disjoint relatively
open subsets of $(0,T]\times\mathbb R^2$, with
$\Omega_{\mathrm{mix}}\neq\varnothing$, whose time slices
cover $\mathbb R^2$ up to boundaries of measure zero.

Suppose that $(\bar\theta,\bar u,\bar m)$ is a subsolution to \eqref{IPM}
with initial density $\bar\theta(0,\cdot)=\theta^\circ$,
where $|\theta^\circ|=1$ almost everywhere, satisfying
\[
\bar\theta=\pm1,\qquad \bar m=\bar\theta\bar u
\qquad\text{in }\Omega_\pm,
\]
and
\[
(\bar\theta,\bar u,\bar m)
\in C(\Omega_{\mathrm{mix}};\mathcal O).
\]
Then there exist infinitely many mixing solutions
$(\theta,u)$ associated with $\Omega_{\mathrm{mix}}$,
with initial density $\theta^\circ$, such that
\[
(\theta,u)=(\bar\theta,\bar u)
\qquad\text{outside }\Omega_{\mathrm{mix}}.
\]
\end{theorem}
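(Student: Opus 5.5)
We would prove the $h$-principle by the Tartar framework for differential inclusions, combined with a Baire category argument, as in \cite{Szekelyhidi12,Mengual22}. Write $w=(\theta,u,m)$. The first step is to compute the wave cone $\Lambda$ of the linear system \eqref{mIPM}, after eliminating $p$ by taking the curl of \eqref{mIPMc}. These are the states $\bar w$ for which there is a direction $\xi=(\xi_t,\xi_x)$ with $\xi_x\neq0$ such that $w(t,x)=\bar w\,h(\xi_t t+\xi_x\cdot x)$ solves the system for every profile $h$. Explicitly, this requires $\xi_t\bar\theta+\xi_x\cdot\bar m=0$, $\xi_x\cdot\bar u=0$ and $\xi_x^\perp\cdot(\bar u+A\bar m+\bar\theta i)=0$. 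One checks that $\Lambda$ is large: any pair $(\bar\theta,\bar m)$ is admissible after a suitable choice of $\bar u$ and $\xi$. We would then verify two geometric facts about $\mathcal O$, which is where $A\neq0$ enters.
\begin{itemize}
\item[(i)] Every $w\in\mathcal O$ is the barycenter of a $\Lambda$-segment $[w-\lambda_1\bar w,\,w+\lambda_2\bar w]\subset\mathcal O$, with $\bar w\in\Lambda$ and a length quantitatively bounded below by $c(1-\theta^2)^{1/2}$, or a similar function of $\operatorname{dist}(w,K)$.
\item[(ii)] $\mathcal O$ lies in the interior of the $\Lambda$-convex hull of $K$, with $\overline{\mathcal O}\cap\{|\theta|=1\}\subset K$.
\end{itemize}
The explicit form \eqref{mathcalO} is what makes both facts computable. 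The natural segments move $\theta$ towards $\pm1$ while keeping the quantity $2(1-A\theta)(m-\theta u)+(1-\theta^2)(Au+i)$ controlled, which is the ``disc'' condition defining $\mathcal O$.

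The second step is localization. Since the inclusion is first order with a pressure, an exact plane wave cannot be cut off directly. Instead we write the linear constraints through potentials. We take $\theta=\operatorname{div}_{t,x}$ of a vector potential in $(t,x)$ satisfying $\partial_t\theta+\nabla\cdot m=0$, and $u=\nabla^\perp\psi$, with $\psi$ determined by the curl of Darcy's law. In terms of these, each $\Lambda$-direction is generated by a constant-coefficient operator applied to $h(\xi\cdot(t,x))$. Multiplying the potential by a smooth cutoff then gives compactly supported solutions $W$ of \eqref{mIPM}. Each $W$ stays in an arbitrarily small neighbourhood of the segment $[-\lambda_1\bar w,\lambda_2\bar w]$ and satisfies $\|W\|_{L^2}\gtrsim\lambda$ for highly oscillating profiles. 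The point requiring care, and the main obstacle, is that $u$ is slaved to $(\theta,m)$ nonlocally through Darcy's law with the $Am$ term. The localized wave must therefore have its $u$-component equal, up to small errors, to the one prescribed by the segment. We would handle this by choosing $\xi$ with $\xi_x\cdot\bar u=0$, so that $u$ oscillates along the level sets, and by absorbing the cutoff errors into the openness of $\mathcal O$.

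The third step is Baire category. Let $X_0$ be the set of subsolutions that agree with $(\bar\theta,\bar u,\bar m)$ outside $\Omega_{\mathrm{mix}}$, have the same initial data, and take values in $\mathcal O$ continuously on $\Omega_{\mathrm{mix}}$. Let $X$ be its closure in $C([0,T];L^\infty_{w^*})$ on bounded sets, which is metrizable. Combining steps one and two gives the perturbation property: for $w\in X_0$ we can add finitely many localized waves in $\Omega_{\mathrm{mix}}$, and obtain $w_k\rightharpoonup w$ in $X_0$ with
\[
\liminf_k\|w_k\|_{L^2(B)}^2\ge\|w\|_{L^2(B)}^2+c\Big(\int_B(1-\bar\theta^2)\Big)^2
\]
for compact $B\subset\Omega_{\mathrm{mix}}$. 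The identity map $X\to L^2_{loc}$ is Baire-1, so it is continuous on a residual set. At each point of continuity the perturbation property forces $|\theta|=1$ almost everywhere in $\Omega_{\mathrm{mix}}$. Together with (ii), this yields that such points are solutions in $K$. The residual set has infinitely many (indeed uncountably many) elements, since $X$ is not a single point.

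Finally, the mixing property \eqref{eq:mixing-property} in each time slice must be checked. Following \cite{CFM19}, we would intersect the residual set with the residual sets $\{w:\int_B(1\pm\theta(t,\cdot))>0\ \forall t\}$ over a countable basis of balls $B$ contained in $\Omega_{\mathrm{mix}}(t)$. Each of these is open and dense, by the strict inequality in $\mathcal O$ and the time continuity. The equalities outside $\Omega_{\mathrm{mix}}$ hold by construction, and $\Omega_\pm(t)$ are then the pure-phase regions.
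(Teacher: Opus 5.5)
Your outline follows the scheme behind the cited proofs. The paper does not reprove Theorem~\ref{thm:h-principle}: it relies on \cite{Szekelyhidi12,Mengual22}, and on \cite{CFM19} for mixing in every time slice. There are, however, two genuine problems, and the first one breaks the Baire step.

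\textbf{Boundedness of the Baire space.} Your $X_0$ consists of $\mathcal O$-valued subsolutions, and $\mathcal O$ is unbounded. The weak-star topology is metrizable, and the closure complete, only on bounded sets; Definition~\ref{definition_subsolution} also requires bounded solutions. Intersecting $X_0$ with an $L^\infty$-ball does not repair this. Property (i) gives no control on where the segment endpoints lie. Moreover, for values within distance $\delta$ of the boundary of a Euclidean ball of radius $R$, any segment containing them at a fixed ratio has length $O(\sqrt{R\delta})$, whatever $|\theta|$ is. So $X_0$ would not be invariant under perturbations with a uniform gain.

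The missing ingredient is the truncated constraint: $K_M=\{z\in K:|2u+\theta i|\le M_\theta\}$ and the bounded open set $\mathcal O_M$ of \eqref{eq:OM-constraints}, with $K_M^{\mathrm{lc},\Lambda}=\overline{\mathcal O_M}$ (Theorem~\ref{thm:bounded-relaxation}, from \cite{Mengual22}). Concretely, you need three things. The $\Lambda$-segments must be found inside $\mathcal O_M$, with $c=c(M)$. The Baire space must be built from $\mathcal O_M$-valued subsolutions. And you must check that the given subsolution is $\mathcal O_M$-valued for an admissible $M$, chosen as in \cite[Lemma~4.7]{Mengual22} (cf.\ Appendix~\ref{AppendixA}). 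This last check uses that in $\mathcal O$ one has $|m-\theta u|<(1-\theta^2)|Au+i|/(1-A\theta)$. That bound gives \eqref{eq:OM-minus}--\eqref{eq:OM-plus} for large $M$, even where $|\theta|\to1$ near $\partial\Omega_{\mathrm{mix}}$.

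The same pinching also corrects your rate in (i). Combine it with the identity $\bar u\cdot(\bar u+A\bar m+\bar\theta e_2)=0$, which holds for every $(\bar\theta,\bar u,\bar m)\in\Lambda$. Then, near $|\theta|=1$, every $\Lambda$-segment in $\mathcal O$ with midpoint $w$ has length $O\bigl((1-\theta^2)(1+|u|)\bigr)$, so the rate $(1-\theta^2)^{1/2}$ is unattainable. The linear rate suffices, and it is what your $\liminf$ bound actually encodes.

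\textbf{Localization.} You name as the main obstacle that $u$ is slaved nonlocally to $(\theta,m)$, and you propose to absorb the resulting errors into the openness of $\mathcal O$. If such a nonlocal tail existed, that remedy would fail. The tail would reach $\Omega_\pm$, where $K$ has empty interior and where the theorem requires $(\theta,u)=(\bar\theta,\bar u)$ exactly.

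In fact no nonlocality arises once the potential is gauged correctly. Set $\theta=\Delta\phi$, $m=-\nabla\partial_t\phi+\nabla^\perp\chi$ and $u=-\nabla^\perp(A\chi+\partial_1\phi)$. Then, identically, $\partial_t\theta+\nabla\cdot m=0$, $\nabla\cdot u=0$ and $u+Am+\theta e_2=\nabla(\partial_2\phi-A\partial_t\phi)$. Plane-wave choices of $(\phi,\chi)$ realize every direction of $\Lambda$; those with vanishing $\theta$-component come from $\chi$ alone. Cutting off $(\phi,\chi)$ yields exact, compactly supported solutions of \eqref{mIPM}, so only the lower-order cutoff terms need to be absorbed by openness. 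Note also that $\partial_t\theta+\nabla\cdot m=0$ makes $(\theta,m)$ a space-time curl; it does not make $\theta$ a divergence. An arbitrary such potential does not make $\psi$ local, but the gauge above does.

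With these corrections, the rest of your outline is the standard argument. The identity map is Baire-1. Continuity points satisfy $|\theta|=1$ and, being strong limits, take values in $\overline{\mathcal O}\cap\{|\theta|=1\}=K$ (Corollary~\ref{cor:separators}). Mixing in every slice follows from countably many open dense conditions indexed by cylinders $\overline{I}\times\overline{B}\subset\Omega_{\mathrm{mix}}$, with $I$ a compact rational time interval; the compactness of $I$ is what makes your sets open.
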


Since the initial density $\theta^\circ$ and the solution
$\theta(t,\cdot)$ satisfy $|\theta^\circ|=|\theta(t,\cdot)|=1$
almost everywhere for every $t\in(0,T]$, weak continuity
in time implies
\[
\theta\in C\bigl([0,T];L^p_{\mathrm{loc}}(\mathbb R^2)\bigr)
\qquad\text{for every }1\le p<\infty.
\]

\subsection{Relaxation of \eqref{IPM}}
\label{subsec:relaxation}

Following \cite{Mengual22}, we use the wave cone
$\Lambda$ associated with plane waves of \eqref{mIPM} having nonzero
spatial frequency. Thus,
$\bar z=(\bar\theta,\bar u,\bar m)\in\Lambda$ if there
exist  $s\in\mathbb R$ and
$\xi\in\mathbb R^2\setminus\{0\}$ such that
\[
s\bar\theta+\xi\cdot\bar m=0,
\qquad
\xi\cdot\bar u=0,
\qquad
\xi^\perp\cdot(\bar u+A\bar m+\bar\theta e_2)=0.
\]
We now recall three notions of relaxation of the constraint set $K$. Se Appendix~\ref{AppendixA} for further details.

\begin{itemize}
\item The \emph{$\Lambda$-lamination hull} $K^{\mathrm{lc},\Lambda}$
consists of the states obtained from $K$ by finitely
many successive laminations along directions in $\Lambda$.

\item The \emph{$\Lambda$-convex hull} $K^\Lambda$ consists of
the states that cannot be separated from $K$ by a
$\Lambda$-convex function, that is, a function convex
along every direction in $\Lambda$.

\item The \emph{weak relaxation} $\mathcal R$ consists of
the constant states $z\in\mathbb R^5$ that can be
approximated by
$K$-valued solutions $z_j$ to the linear system \eqref{mIPM} in $C([0,T];L^2_w(B;\mathbb R^5))$,
for some $T>0$ and some bounded open ball
$B\subset\mathbb R^2$.
The pointwise constraint is imposed almost everywhere
in $(0,T)\times B$, and no initial or boundary conditions
are prescribed.
\end{itemize}

For $A=0$, these three notions of relaxation were shown
to coincide with $\overline{\mathcal O}$ in \cite{Szekelyhidi12}.
For $A\neq0$, the identity
$K^{\mathrm{lc},\Lambda}=\overline{\mathcal O}$
was established in \cite{Mengual22}.
The following theorem completes this characterization
by identifying $\overline{\mathcal O}$ also with
$K^\Lambda$ and $\mathcal R$.

\begin{theorem}[Relaxation of \eqref{IPM}]
\label{thm:relaxation}
For every $|A|<1$,
\[
K^{\mathrm{lc},\Lambda}
=
K^\Lambda
=
\mathcal R
=
\overline{\mathcal O}.
\]
\end{theorem}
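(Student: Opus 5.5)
The plan is to prove the chain of inclusions
\[
\overline{\mathcal O}=K^{\mathrm{lc},\Lambda}\subset K^\Lambda,\qquad K^{\mathrm{lc},\Lambda}\subset\mathcal R\subset K^\Lambda\subset\overline{\mathcal O}.
\]
The identity $K^{\mathrm{lc},\Lambda}=\overline{\mathcal O}$ is taken from \cite{Mengual22}. The inclusion $K^{\mathrm{lc},\Lambda}\subset K^\Lambda$ is general: a $\Lambda$-convex function that is bounded above on $K$ stays bounded on every laminate built by successive splittings along $\Lambda$-segments. Everything therefore reduces to two inclusions: $K^{\mathrm{lc},\Lambda}\subset\mathcal R$ and $\mathcal R\subset K^\Lambda\subset\overline{\mathcal O}$.

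For $K^{\mathrm{lc},\Lambda}\subset\mathcal R$, I would use plane waves with localization. Suppose $z=\lambda z_1+(1-\lambda)z_2$ with $z_1-z_2\in\Lambda$, given by $(s,\xi)$. The potentials of \eqref{mIPM} can be used to build solutions oscillating between neighborhoods of $z_1$ and $z_2$, i.e.\ functions of $x\cdot\xi+st$ at high frequency, cut off with small errors. Iterating this over the finite lamination tree gives $K$-valued approximants converging weakly to the constant state $z$. Strictly speaking, exact $K$-valuedness needs the convex integration already in the $h$-principle, Theorem~\ref{thm:h-principle}. So a cleaner route is to note that constants $z\in\mathcal O$ are strict subsolutions on $(0,T)\times B$. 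The $h$-principle then produces $K$-valued solutions $z_j\rightharpoonup z$; these are the standard approximating sequences of the Baire-category argument. Closedness of $\mathcal R$ under limits, via a diagonal argument in the metrizable weak topology on bounded sets, then gives $\overline{\mathcal O}\subset\mathcal R$.

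For $\mathcal R\subset K^\Lambda$, I would pass to the Young measure $\nu_{t,x}$ generated by $z_j$. It is supported on $K$ and has barycenter $z$. By the Tartar/compensated-compactness principle, $\nu$ should commute with $\Lambda$-affine functions. These are the quadratic null Lagrangians of \eqref{mIPM}, for instance combinations of $\theta u$, $m$, and the pairing obtained from the div-curl structure of $u$ and the transport equation. This would give $z\in K^{qc}$, which I would then relate to $K^\Lambda$. The cleanest step, however, is to bypass $K^\Lambda$ and prove $\mathcal R\subset\overline{\mathcal O}$ and $K^\Lambda\subset\overline{\mathcal O}$ directly. For both I would exhibit, for each $z\notin\overline{\mathcal O}$, a $\Lambda$-convex function (better, a quadratic $\Lambda$-affine one plus a convex part) that is $\le0$ on $K$ and $>0$ at $z$. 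Such a function separates $z$ from $K$, which gives $z\notin K^\Lambda$. For $\mathcal R$, the div-curl lemma applied to the bilinear quantity passes it to the weak limit, so $z\notin\mathcal R$.

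The main obstacle is constructing these separating functions for $A\neq0$. I would first change variables so that the condition defining \eqref{mathcalO} becomes a disc condition. Concretely, set $w=2(1-A\theta)(m-\theta u)+(1-\theta^2)(Au+i)$; then \eqref{mathcalO} reads $|w|<(1-\theta^2)|Au+i|$. Next I would compute the $\Lambda$-affine quadratic functions. From the wave cone, $\xi\perp \bar u$, $\xi^\perp\perp \bar u+A\bar m+\bar\theta i$ and $s\bar\theta=-\xi\cdot\bar m$, so something like $\bar u\cdot(\bar u+A\bar m+\bar\theta i)$ combined with $\theta$-terms should vanish on $\Lambda$. Finally I would show that, for each direction $e\in S^1$, the function $f_e(z)=e\cdot w-(1-\theta^2)|Au+i|$ (suitably modified) is $\Lambda$-convex, $\le0$ on $K$, and has supremum over $e$ that carves out exactly $\overline{\mathcal O}$. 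I expect verifying the $\Lambda$-convexity of the $|Au+i|$ term to be the delicate computation. It uses that $\bar u$ is restricted to the line $\xi^\perp$ along $\Lambda$, together with the constraint $|A|<1$.
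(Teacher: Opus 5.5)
\textbf{Gap: the separating functions for $K^\Lambda\subset\overline{\mathcal O}$ and $\mathcal R\subset\overline{\mathcal O}$ are missing, and your candidate fails for $A\neq0$.}

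Your handling of the easier inclusions matches the paper. This covers $K^{\mathrm{lc},\Lambda}=\overline{\mathcal O}$ from \cite{Mengual22}, the general inclusion $K^{\mathrm{lc},\Lambda}\subset K^\Lambda$, and $\overline{\mathcal O}\subset\mathcal R$ via the approximation property of the $h$-principle. For the diagonal argument you still need uniform bounds; the paper gets them by placing the approximants in a common $\mathcal O_M$ and using $K_M$-valued solutions.

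The two nontrivial inclusions rest entirely on the separating functions. The only concrete candidate you give, $f_e=e\cdot w-(1-\theta^2)|Au+i|$, is not $\Lambda$-convex when $A\neq0$, and the ``suitable modification'' is not specified. Indeed, for every $\xi\neq0$ and $\mu\in\mathbb R$, the direction $\bar z=(0,-A\mu\xi^\perp,\mu\xi^\perp)$ lies in $\Lambda$: take $\bar\theta=0$, so that $\xi\cdot\bar m=\xi\cdot\bar u=0$ and $\bar u+A\bar m=0$. Along $z+t\bar z$ the phase is frozen, so $e\cdot w$ is affine in $t$. Meanwhile $-(1-\theta^2)|Au+i-tA^2\mu\xi^\perp|$ is concave in $t$, and strictly so when $|\theta|<1$ and $\xi^\perp$ is not parallel to $Au+i$. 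For $A=0$ these directions have $\bar u=0$ and $f_e$ is genuinely $\Lambda$-convex, which is why this shape works in the equal-viscosity case.

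Passing a general $\Lambda$-convex function to weak limits is also not available for $\mathcal R\subset\overline{\mathcal O}$. There you need separators of the form (weakly continuous $\Lambda$-affine quadratic) $+$ (convex), as you suggested but did not produce. Your Young-measure detour could only place $z$ in the quasiconvex hull, which contains $K^\Lambda$ rather than being contained in it, so bypassing it was right.

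\textbf{The missing idea: remove the square root and use the perspective structure.} One has $|w|^2-(1-\theta^2)^2|Au+i|^2=4(1-A\theta)g(z)$, where $g(z)=(1-A\theta)|m-\theta u|^2+(1-\theta^2)(m-\theta u)\cdot(Au+e_2)$. For $|\theta|<1$,
\[
\frac{g(z)}{1-\theta^2}=\frac12\sum_{a=\pm1}\frac{1+aA}{1+a\theta}|u+am|^2+m_2-Q(z),\qquad Q(z)=u\cdot(u+Am+\theta e_2).
\]
Here $Q$ vanishes on $\Lambda$, since $\bar u\perp\xi$ and $\bar u+A\bar m+\bar\theta e_2\parallel\xi$. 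Hence $Q$ is $\Lambda$-affine and weakly continuous along solutions of \eqref{mIPM} by the div--curl lemma.

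Linearize each perspective term through $\sup_{y}(x-\tfrac t2y)\cdot y=|x|^2/(2t)$. This yields the paper's family $H_w$, with the following properties:
\begin{itemize}
\item each $H_w$ is affine minus $Q$;
\item $H_w=-(1+\theta A)|w_\theta-u|^2\le0$ on $K$;
\item $\sup_wH_w=g/(1-\theta^2)$ for $|\theta|<1$;
\item for $|\theta|=1$, requiring $H_w\le0$ for all $w$ forces $m=\theta u$.
\end{itemize}
Hence $\overline{\mathcal O}=\{|\theta|\le1\}\cap\bigcap_w\{H_w\le0\}$. This gives $K^\Lambda\subset\overline{\mathcal O}$ at once, and $\mathcal R\subset\overline{\mathcal O}$ because only $Q$ needs the div--curl lemma.
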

\begin{proof}
See Appendix~\ref{sec:thm:relaxation}.
\end{proof}

To obtain bounded solutions through convex integration,
we also consider a bounded constraint set introduced in
\cite{Szekelyhidi12,Mengual22}.
For $M>1$, set
\[
M_\pm:=\sqrt{\frac{M^2\pm A}{1\pm A}},
\qquad
K_M:=\bigl\{(\theta,u,m)\in K:
|2u+\theta i|\le M_\theta\bigr\},
\]
where $M_\theta=M_\pm$ for $\theta=\pm 1$.
Let $\mathcal O_M$ be the bounded open subset of
$\mathcal O$ defined by
\begin{subequations}\label{eq:OM-constraints}
\begin{align}
4u\cdot(u+Am+\theta i+Ai)
&<M^2-1,
\label{eq:OM-quadratic}\\
|2(m-u)+(1-\theta)i|
&<M_-(1-\theta),
\label{eq:OM-minus}\\
|2(m+u)+(1+\theta)i|
&<M_+(1+\theta).
\label{eq:OM-plus}
\end{align}
\end{subequations}
We define $\mathcal R_M$ as above, with $K_M$-valued
solutions in place of $K$-valued solutions.

As in the unbounded case, the following characterization
was established for $A=0$ in \cite{Szekelyhidi12},
while the identity
$K_M^{\mathrm{lc},\Lambda}=\overline{\mathcal O_M}$
was proved in \cite{Mengual22}.

\begin{theorem}[Bounded relaxation of \eqref{IPM}]
\label{thm:bounded-relaxation}
For every $|A|<1$ and $M>1$,
\[
K_M^{\mathrm{lc},\Lambda}
=
K_M^\Lambda
=
\mathcal R_M
=
\overline{\mathcal O_M}.
\]
\end{theorem}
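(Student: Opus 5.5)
The plan is to reduce the bounded statement to the unbounded Theorem~\ref{thm:relaxation} together with the lamination identity $K_M^{\mathrm{lc},\Lambda}=\overline{\mathcal O_M}$ from \cite{Mengual22}. The general inclusions
\[
K_M^{\mathrm{lc},\Lambda}\subset K_M^\Lambda,\qquad \mathcal R_M\subset K_M^\Lambda
\]
are standard. The first holds because a $\Lambda$-convex function is convex along each lamination segment, so its sublevel sets are stable under laminates. The second uses the usual compensated compactness/Young measure argument. If $z_j\rightharpoonup z$ with $z_j\in K_M$ solving \eqref{mIPM}, the associated Young measure $\nu$ is supported on the compact set $K_M$ and has barycenter $z$. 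By Tartar's framework, Jensen's inequality $f(z)\le\langle\nu,f\rangle$ holds for every $\Lambda$-convex $f$. Hence $f(z)\le\sup_{K_M}f$ for all such $f$, which is exactly $z\in K_M^\Lambda$. The point of the bound $M$ is that it gives the equi-integrability needed for the Young measure to represent weak limits.

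It then suffices to show $K_M^\Lambda\subset\overline{\mathcal O_M}$ and $\overline{\mathcal O_M}\subset\mathcal R_M$.

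For $\overline{\mathcal O_M}\subset\mathcal R_M$, we apply the convex integration machinery behind Theorem~\ref{thm:h-principle}, in its bounded version from \cite{Szekelyhidi12,Mengual22}. Take a constant state $z\in\mathcal O_M$. It is a strict subsolution on $(0,T)\times B$. Since no initial or boundary data are prescribed, the Baire category argument on the space of subsolutions that agree with $z$ near the parabolic boundary produces $K_M$-valued solutions $z_j\rightharpoonup z$ in $C([0,T];L^2_w)$. Points of $\partial\mathcal O_M$ are then handled by a diagonal argument, using that $\mathcal R_M$ is closed; closedness follows from the metrizability of the weak topology on bounded sets.

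For $K_M^\Lambda\subset\overline{\mathcal O_M}$, which I expect to be the main obstacle, I would exhibit, for each $z\notin\overline{\mathcal O_M}$, a $\Lambda$-convex function $f$ with $f\le0$ on $K_M$ and $f(z)>0$. The natural candidates come from the three defining inequalities \eqref{eq:OM-constraints}.

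First, the quadratic $f_1=4u\cdot(u+Am+\theta i+Ai)-(M^2-1)$ should be $\Lambda$-affine. Its quadratic part $4u\cdot(u+Am+\theta i)$ vanishes on wave-cone directions by the relations $\xi\cdot\bar u=0$ and $\xi^\perp\cdot(\bar u+A\bar m+\bar\theta i)=0$. This is a direct check.

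Second, the two cone conditions should be $\Lambda$-convex. Each has the form $|\ell(z)|-M_\pm(1\pm\theta)$ with $\ell$ affine, and is therefore convex, hence $\Lambda$-convex. On $K_M$ they are $\le0$: when $\theta=\mp1$ both sides vanish, and when $\theta=\pm1$ the inequality reduces to the bound $|2u+\theta i|\le M_\theta$, using $m=\theta u$.

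It remains to verify that these three functions cut out exactly $\overline{\mathcal O_M}$, with $\mathcal O$ being implied. I would first check that on the region $\{f_1\le0\}\cap\{f_\pm\le0\}$ the inequality defining \eqref{mathcalO} holds, and then that these three functions separate every exterior point. If a purely convex description fails, I would instead follow Appendix~\ref{AppendixA}. There one uses the already established identity $K^\Lambda=\overline{\mathcal O}$ to get $K_M^\Lambda\subset\overline{\mathcal O}$, and then intersects with the $\Lambda$-convex sublevel sets $\{f_1\le0\}$ and $\{f_\pm\le0\}$. This yields $K_M^\Lambda\subset\overline{\mathcal O}\cap\{f_1\le0,f_\pm\le0\}=\overline{\mathcal O_M}$, where the final equality is the computation from \cite{Mengual22}.

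Chaining the inclusions gives
\[
\overline{\mathcal O_M}=K_M^{\mathrm{lc},\Lambda}\subset K_M^\Lambda\subset\overline{\mathcal O_M}
\qquad\text{and}\qquad
\overline{\mathcal O_M}\subset\mathcal R_M\subset K_M^\Lambda\subset\overline{\mathcal O_M},
\]
so all four sets coincide.
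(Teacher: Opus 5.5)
\textbf{The gap: $\mathcal R_M\subset K_M^\Lambda$.} Your second chain uses the claim that the Young measure satisfies Jensen's inequality $f(z)\le\langle\nu,f\rangle$ for every $\Lambda$-convex $f$. That claim is not available. The general theory (Fonseca--M\"uller) gives Jensen's inequality for $\mathcal A$-quasiconvex integrands. $\Lambda$-convexity is the analogue of rank-one convexity and is in general strictly weaker (\v{S}ver\'ak's counterexample already in the gradient case). Tartar's compensated compactness gives Jensen only for quadratic forms that are nonnegative on $\Lambda$, in particular $\Lambda$-affine quadratics via div--curl. Lower semicontinuity gives it for convex functions. So the general inclusion is $\mathcal R_M\subset K_M^{\mathrm{qc}}$, the hull cut out by $\mathcal A$-quasiconvex functions. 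That set contains $K_M^\Lambda$ rather than being contained in it. As written, $\overline{\mathcal O_M}\subset\mathcal R_M\subset K_M^\Lambda\subset\overline{\mathcal O_M}$ is therefore unjustified.

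\textbf{The repair (the paper's route).} You only need the limit passage for the specific separating functions, and all of them are of the good type. Since $K_M\subset K$, you get $\mathcal R_M\subset\mathcal R=\overline{\mathcal O}$ by Theorem~\ref{thm:relaxation}. Its proof (Lemma~\ref{lem:weak-stability}) passes the $\Lambda$-affine functions $H_w$ to weak limits by div--curl, using $\operatorname{div}u_j=0$ and $\operatorname{curl}(u_j+Am_j+\theta_je_2)=0$. The non-strict version of \eqref{eq:OM-quadratic} has left-hand side $4Q(z)+4Au_2$, so the same div--curl argument applies. The constraints \eqref{eq:OM-minus}--\eqref{eq:OM-plus} are convex, hence weakly lower semicontinuous. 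This gives $\mathcal R_M\subset\overline{\mathcal O_M}$ directly, with no appeal to Jensen for general $\Lambda$-convex functions.

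\textbf{The other parts.} Your fallback for $K_M^\Lambda\subset\overline{\mathcal O_M}$ coincides with the paper's argument. It is also necessary: the three extra constraints do not imply $\mathcal O$. For instance, with $A=0$, $\theta=0$, $u=0$ and $1<|2m+i|<M$, all three hold strictly while $z\notin\overline{\mathcal O}$. In the approximation step, the paper does not apply the bounded $h$-principle with the given $M$. It uses strictness to choose $1<M_n<M$ with $z_n\in\mathcal O_{M_n}$, which avoids the exceptional parameter in \cite[Lemma~4.7]{Mengual22}. It then uses that $K_{M_n}$-valued solutions are $K_M$-valued and uniformly bounded, so your diagonal argument goes through on a fixed $(0,T)\times B$.
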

\begin{proof}
See Appendix~\ref{sec:thm:bounded-relaxation}.
\end{proof}

\section{Sketch of the proof}\label{section3}
The goal of this section is to outline the proof of Theorem~\ref{thm:main}.
In Section~\ref{sec:sketch:ST}, we begin by deriving a Saffman--Taylor
formulation \eqref{Msigma:intro} of the classical Muskat problem, which will be useful
for constructing the subsolution.
We then examine the flat-interface case in Section~\ref{sec:sketch:flat}
to motivate the ansatz.
Finally, Section~\ref{sec:sketch:fully} presents the main ideas
of the construction for fully unstable interfaces,
providing a roadmap for Sections~\ref{section4}--\ref{section8}.

\subsection{A Saffman--Taylor formulation of the Muskat problem}\label{sec:sketch:ST}

The classical Muskat problem corresponds to the \eqref{IPM} system
under the \textit{Muskat assumption}: the phase variable $\theta$ remains constant in the two regions
$\Omega_{\mathrm{left}}$ and $\Omega_{\mathrm{right}}$ separated by an interface
parametrized by $z=z(t,\alpha)$. The labels \textit{left} and \textit{right} refer to the
prescribed orientation of $z$. This convention is also used for the
corresponding restrictions of the phase $\theta$, velocity $u$, and pressure $p$ to each region.

The velocity and pressure are determined in terms of the phase through the
incompressibility condition and Darcy's law \eqref{IPMb}--\eqref{IPMc}. The variable $\theta$ is normalized so as to take opposite values in the two
regions, namely one of them being $+1$ and the other $-1$. The regions $\Omega_{\mathrm{left}}(t)$ and $\Omega_{\mathrm{right}}(t)$ evolve
in time according to the interface $z(t)$, which encodes the full dynamics of
the problem through the phase conservation equation \eqref{IPMa} and is the
unknown to be determined.

\begin{figure}[htbp]
    \centering
    \includegraphics[width=0.84\textwidth]{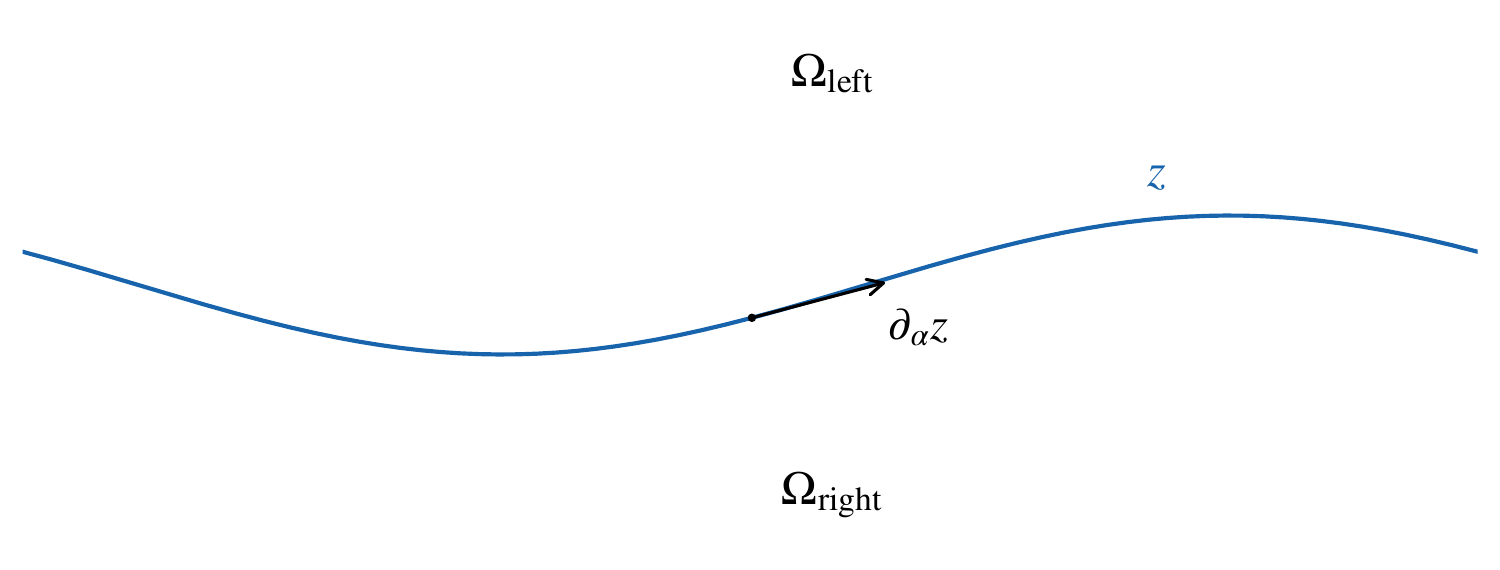}
    \caption{%
        The Muskat assumption: two pure phases separated by an
        oriented interface $z=z(t,\alpha)$, with $\theta(t)$ constant in each region
        $\Omega_{\mathrm{left}}(t)$ and $\Omega_{\mathrm{right}}(t)$.
    }
    \label{fig:oriented-interface}
\end{figure}

\subsubsection{Solving \eqref{IPMa}}\label{sec:IPMa}
Away from the interface $z$, the phase conservation equation reduces to
$\nabla\cdot u=0$, namely \eqref{IPMb}. Across the interface, the weak
formulation of \eqref{IPMa} yields
\begin{equation}\label{eq:Muskat:1}
\llbracket\theta\rrbracket(\partial_t z-U)\cdot\partial_\alpha z^\perp=0,
\end{equation}
where $U=\frac{1}{2}(u_{\mathrm{left}}+u_{\mathrm{right}})$ denotes the average velocity trace evaluated on the interface $z$. This determines the evolution of $z$ up to tangential terms, which reflect the
freedom of parametrization.
The expression of the operator $U\cdot\partial_\alpha z^\perp$ will be derived in next subsection.

\subsubsection{Solving \eqref{IPMb}--\eqref{IPMc}}\label{sec:IPMbc}
First, the incompressibility condition implies that the velocity can be written as $u=\nabla^\perp\psi$, for some stream
function $\psi$. Second, Darcy's law implies that both $\psi$ and $p$ are
harmonic away from the interface. Moreover, $\psi$ and $p$ are given by the Poisson equations
$$
\Delta\psi=\varpi\delta_z,
\qquad
\Delta p=\sigma\delta_z,
$$
in terms of some vorticity and Saffman--Taylor densities $\varpi$ and $\sigma$, to be determined.
With the usual far-field normalization, the Biot--Savart law
determines $u=u(t,x)$ and $\nabla p=\nabla p(t,x)$, for $x$ away from $z$,
\begin{equation}\label{Muskat:uDp}
u^*=\frac{1}{2\pi i}\int\frac{\varpi(\beta)}{x-z(\beta)}d\beta,
\qquad
(\nabla p)^*
=\frac{1}{2\pi}\int\frac{\sigma(\beta)}{x-z(\beta)}d\beta,
\end{equation}
where $*$ denotes complex conjugation. Using the Sokhotski--Plemelj formulas, the limits of \eqref{Muskat:uDp} as $x\to z(\alpha)$ from the
right and from the left are, respectively,
$$
u\rightarrow U\pm\frac{1}{2}\frac{\varpi}{\partial_\alpha z^*},
\qquad
\nabla p\rightarrow Q\mp\frac{1}{2}\frac{i\sigma}{\partial_\alpha z^*},
$$
with 
$U^*=B[\varpi,z]$ and $Q^*=iB[\sigma,z]$, 
where $B$ denotes the Birkhoff--Rott operator
$$
B[f,z](\alpha):=\frac{1}{2\pi i}\int\frac{f(\beta)}{z(\alpha)-z(\beta)}d\beta,
$$
with the integral understood in the principal value sense.
Substituting the right and left traces into Darcy's law, 
we deduce the relations
\begin{subequations}
\begin{align}
(1+\theta_r A)\left(U+\frac{1}{2}\frac{\varpi}{\partial_\alpha z^*}\right)+\left(Q-\frac{1}{2}\frac{i\sigma}{\partial_\alpha z^*}\right)&=-i\theta_r,\label{eq:Darcy:right}\\
(1+\theta_l A)\left(U-\frac{1}{2}\frac{\varpi}{\partial_\alpha z^*}\right)+\left(Q+\frac{1}{2}\frac{i\sigma}{\partial_\alpha z^*}\right)&=-i\theta_l\label{eq:Darcy:left}.
\end{align}
\end{subequations}
where we have abbreviated $\theta_r=\theta_{\text{right}}$ and $\theta_l=\theta_{\text{left}}$.

Subtracting these expressions, first
$\eqref{eq:Darcy:right}-\eqref{eq:Darcy:left}$ and then
$(1+\theta_l A)\eqref{eq:Darcy:right}
-(1+\theta_r A)\eqref{eq:Darcy:left}$, we obtain
\begin{subequations}\label{eq:Darcy:subtrac}
\begin{align}
\varpi+i\sigma
&=-\llbracket\theta\rrbracket(AU+i)^*\partial_\alpha z,\label{eq:Darcy:subtrac:1}\\
(1-A^2)\varpi+i\sigma
&=+\llbracket\theta\rrbracket(AQ-i)^*\partial_\alpha z.\label{eq:Darcy:subtrac:2}
\end{align}
\end{subequations}
Similarly, adding these expressions, we obtain
\begin{subequations}\label{eq:Darcy:add}
\begin{align}
U+Q
&=-\frac{1}{4}A\llbracket\theta\rrbracket\frac{\varpi}{\partial_\alpha z^*},\label{eq:Darcy:add:1}\\
(1-A^2)U+Q
&=
-\frac{1}{4}A\llbracket\theta\rrbracket
\frac{i\sigma}{\partial_\alpha z^*}
+Ai.\label{eq:Darcy:add:2}
\end{align}
\end{subequations}
In particular, we deduce that
\begin{equation}\label{U+Q:tangent}
(U+Q)\cdot\partial_\alpha z^\perp=0.
\end{equation}

\subsubsection{Recovering $(\varpi,\sigma)$}
The vorticity and Saffman--Taylor densities are given by
\begin{subequations}
\begin{align}
\varpi
&=+\llbracket u\rrbracket\cdot\partial_\alpha z,
&\sigma&=-\llbracket \nabla p\rrbracket\cdot\partial_\alpha z^\perp.\notag
\intertext{Taking the real and imaginary parts in \eqref{eq:Darcy:subtrac}, they satisfy}
\varpi&=-\llbracket\theta\rrbracket(AU+i)\cdot\partial_\alpha z ,
&\sigma&=+\llbracket\theta\rrbracket(AU+i)\cdot\partial_\alpha z^\perp,\label{eq:Darcy:subtrac:realimag:1}\\
(1-A^2)\varpi
&=+\llbracket\theta\rrbracket(AQ-i)\cdot\partial_\alpha z,
&\sigma&=-\llbracket\theta\rrbracket(AQ-i)\cdot\partial_\alpha z^\perp.\label{eq:Darcy:subtrac:realimag:2}
\intertext{For $A=0$, these identities reduce to}
\varpi&=-\llbracket\theta\rrbracket\partial_\alpha z_2,
&\sigma&=+\llbracket\theta\rrbracket\partial_\alpha z_1,\notag
\end{align}
\end{subequations}
or, more compactly, \eqref{eq:Darcy:subtrac} reduces to
$\varpi+i\sigma
=\llbracket\theta\rrbracket i\partial_\alpha z.$
In addition, \eqref{eq:Darcy:add} reduces to
$$
0=(U+Q)^*
=B[\varpi+i\sigma,z]
=\frac{\llbracket\theta\rrbracket}{2\pi }\int\frac{\partial_\alpha z(\beta)}{z(\alpha)-z(\beta)}d\beta.
$$
For $A\neq0$, these relations are no longer explicit,
since $U^*=B[\varpi,z]$ and $Q^*=iB[\sigma,z]$.
One may either solve the relation for $\varpi$ on the left of
\eqref{eq:Darcy:subtrac:realimag:1} and then recover $\sigma$
from the relation on the right,
or either solve the relation for $\sigma$
on the right of \eqref{eq:Darcy:subtrac:realimag:2}
and then recover $\varpi$ from the relation on the left.
It turns out that both problems reduce to inverting operators of the form $I\pm AP$:
\begin{align*}
(I+AP)\varpi&=-\llbracket\theta\rrbracket\partial_\alpha z_2,
&(I-AP)\sigma&=+\llbracket\theta\rrbracket\partial_\alpha z_1,
\end{align*}
where $P$ is the real operator
$$
P[f]:=\llbracket\theta\rrbracket B[f,z]^*\cdot\partial_\alpha z
=\llbracket\theta\rrbracket 
\left(\frac{1}{2\pi i}\int f(\beta)\frac{\partial_\alpha z(\alpha)}{z(\alpha)-z(\beta)}d\beta\right)_1.
$$

In \cite{CCG11}, Sobolev estimates for $P$ and quantitative
bounds for the inverse of $I-\xi P$, $|\xi|\le1$, are established
in the horizontally periodic setting under suitable regularity assumptions
on the interface.
The operator $T$ in \cite{CCG11} coincides with $P$ up to
a sign determined by the phase jump.
For asymptotically flat curves, the corresponding Sobolev
estimates and the inversion needed here are developed
in Section~\ref{section8} for completeness.

\subsubsection{Summary}\label{sec:Muskat:summary}
We have seen that the IPM system under the Muskat assumption is
equivalent to solving the Cauchy problem for $z$, referred to as the \textit{Muskat equation}:
\begin{align}
(\partial_t z-U[\varpi,z])\cdot\partial_\alpha z^\perp&=0,
&\varpi&=-\llbracket\theta\rrbracket(AU[\varpi,z]+i)\cdot\partial_\alpha z.\tag{M$\varpi$}\label{Mvarpi}
\intertext{Using \eqref{U+Q:tangent}, this can be rewritten as}
(\partial_tz +Q[\sigma,z])\cdot\partial_\alpha z^\perp&=0,
&\sigma&=-\llbracket\theta\rrbracket(AQ[\sigma,z]-i)\cdot\partial_\alpha z^\perp.\tag{M$\sigma$}\label{Msigma}
\end{align}
Recall that $U^*=B[\varpi,z]$ and  $Q^*=iB[\sigma,z]$, where $B$ denotes the Birkhoff-Rott operator.

Although the two formulations are equivalent, \eqref{Mvarpi} is the form that
appears recurrently in the literature, whereas \eqref{Msigma} seems to be new,
to the best of our knowledge. Notice that the latter allows us to
rewrite the equation for $\sigma$ as
\begin{equation}\label{eq:sigma:zt}
\sigma=\llbracket\theta\rrbracket
\bigl(A\partial_t z+i\bigr)\cdot\partial_\alpha z^\perp.
\end{equation}
The advantage of this formulation becomes apparent when passing from the classical Muskat problem to subsolutions with a mixing zone: the pressure remains harmonic in each region, whereas the velocity may develop vorticity in the interior of the mixing zone. We explain this distinction at the end of Section~\ref{sec:sketch:fully}.

\subsubsection{Alternative derivations}\label{section3.1.5}
We now present two more direct derivations of the Muskat equation.
The first one corresponds to \eqref{Mvarpi} and is the standard approach in the
literature. The second one corresponds to the new formulation \eqref{Msigma}
and, as already mentioned, will play a crucial role in our treatment of the
unstable regime. 
In both cases, the derivation of the evolution equation
\eqref{eq:Muskat:1}
follows Section~\ref{sec:IPMa}, while the expression of the operator
$U\cdot\partial_\alpha z^\perp$ is obtained through two different shortcuts within the
approach of Section~\ref{sec:IPMbc}.
\begin{itemize}
    \item[\eqref{Mvarpi}] The first approach starts from the observation that $\psi$ must be harmonic away
from $z$, which leads to the velocity representation in \eqref{Muskat:uDp}.
This automatically satisfies the incompressibility condition \eqref{IPMb}. Then,
instead of using the pressure-gradient representation in \eqref{Muskat:uDp}, one
can define the pressure directly through Darcy's law \eqref{IPMc} 
\begin{equation}\label{eq:Dp}
-\nabla p:=u+A\theta u+\theta i,
\end{equation}
provided that the right-hand side of \eqref{eq:Dp} is curl-free. This condition
holds trivially away from $z$, while across the interface it requires the jump
to be normal to the interface, that is, $\llbracket \nabla p \rrbracket \cdot \partial_\alpha z = 0.$
    This condition leads to
    \begin{equation}\label{eq:Mvarpi:2}
    \varpi
        =
        -\llbracket \theta \rrbracket (AU+i)\cdot \partial_\alpha z.
    \end{equation}
    Combining \eqref{eq:Muskat:1} and \eqref{eq:Mvarpi:2} yields \eqref{Mvarpi}.
    \item[\eqref{Msigma}] The second approach starts from the observation that $p$ must be harmonic away
from $z$, which leads to the pressure-gradient representation in \eqref{Muskat:uDp}. Then, instead of using the velocity representation in \eqref{Muskat:uDp}, one defines the velocity directly through Darcy's law \eqref{IPMc} 
    $$
u:=-\frac{1}{1+\theta A}(\theta i+\nabla p).
$$
    This is automatically divergence-free away from \(z\). To verify \eqref{IPMb}, it remains to check that the jump is tangential to the interface, that is, $\llbracket u \rrbracket \cdot \partial_\alpha z^\perp = 0$.
    This condition leads to
    \begin{equation}\label{eq:Msigma:2}
    \sigma
        =
        -\llbracket \theta \rrbracket (AQ-i)\cdot \partial_\alpha z^\perp.
    \end{equation}
    Equivalently, taking the divergence in Darcy's law \eqref{IPMc} and using \eqref{IPMa},
the incompressibility condition \eqref{IPMb} becomes $\Delta p =A\partial_t\theta-\nabla\cdot(\theta i)$, which yields \eqref{eq:sigma:zt}.

    Using that $U\cdot\partial_\alpha z^\perp = u_r\cdot\partial_\alpha z^\perp$, together with $\llbracket\theta\rrbracket=2\theta_r$, and \eqref{eq:Msigma:2}, we deduce \eqref{U+Q:tangent}
    \begin{align*}
    U\cdot\partial_\alpha z^\perp
    &=-\frac{1}{1+\theta_r A}\left(i\theta_r+Q-\frac{1}{2}\frac{i\sigma}{\partial_\alpha z^*}\right)\cdot\partial_\alpha z^\perp\\
    &=-\frac{1}{1+\theta_r A}\left((i\theta_r+Q)\cdot\partial_\alpha z^\perp-\frac{\sigma}{2}\right)
    =-Q\cdot\partial_\alpha z^\perp.
    \end{align*}
    Combining this with \eqref{eq:Muskat:1} and \eqref{eq:Msigma:2} yields \eqref{Msigma}.
\end{itemize}

\subsection{The flat interface}\label{sec:sketch:flat}

In this subsection, we construct a suitable subsolution
$(\bar\theta,\bar u,\bar m)$ for the unstable flat interface
$$
\theta^\circ(x)
=\pm 1,\qquad
\pm x_2>0.
$$
Following the piecewise-constant approach of
\cite{ForsterSzekelyhidi18,CFM22}, we declare 
$$
\bar{\theta}(t,x)
=\left\lbrace
\begin{array}{cc}
   \pm 1,  & \pm x_2>h_{\pm}t, \\[0.1cm]
   \bar{\theta}_{\text{mix}},  & -h_-t<x_2<h_+t,
\end{array}
\right.
$$
with $h_\pm>0$ and $\bar{\theta}_{\text{mix}}$ constants to be determined.

\begin{figure}[htbp]
    \centering
    \begin{subfigure}[t]{0.84\textwidth}
        \centering
        \includegraphics[width=\linewidth]{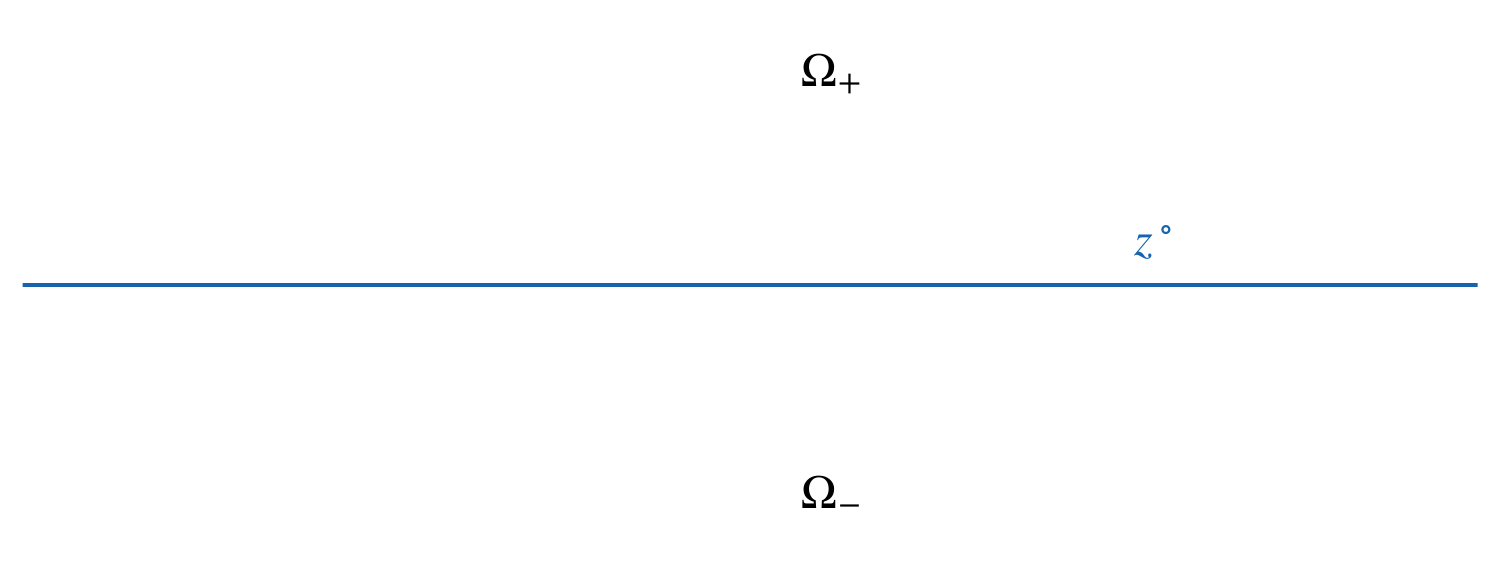}
        \caption{Flat initial interface $z^\circ(\alpha)=\alpha$.}
        \label{fig:flat-initial}
    \end{subfigure}

    \par\medskip

    \begin{subfigure}[t]{0.84\textwidth}
        \centering
        \includegraphics[width=\linewidth]{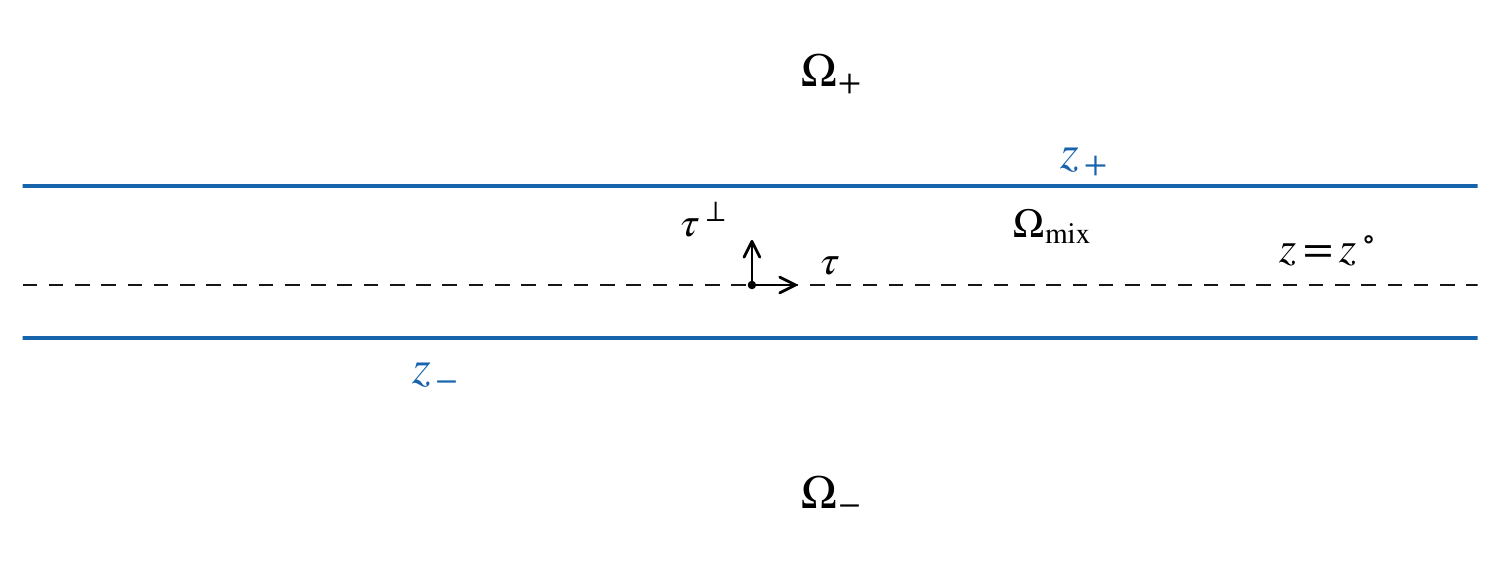}
        \caption{Mixing zone for the flat interface.}
        \label{fig:flat-mixing}
    \end{subfigure}

    \caption{%
        Mixing-zone geometry for the flat initial interface.
        The boundaries $z_\pm(t,\alpha)=(\alpha,\pm h_\pm t)$
        are shown in blue, while $z=z^\circ$ is dashed.
        Here $\theta_{\mathrm{mix}}=A=0.3$,
        and $t=0.60$, so that $h_\pm=1/(1\mp A)$.
        The vectors $\tau$ and $\tau^\perp$ denote
        the unit tangent and normal to the initial interface.
    }
    \label{fig:flat-interface}
\end{figure}

Moreover, we declare
$$
\bar{u}=0
$$
which satisfies $\nabla\cdot{\bar{u}}=0$ trivially.

The hull condition \eqref{mathcalO} suggest to make the change of variables
$$
\bar{m}
=-\frac{1-\bar{\theta}^2}{1-\bar{\theta}A}(\gamma+\tfrac{1}{2})i,
$$
in terms of a constant $\gamma\in\R$, to be determined. Indeed, the hull condition reads as
$$
|\gamma|<\tfrac{1}{2}.
$$
Darcy's law is automatically satisfied outside $\pm x_2=h_\pm t$. Inside, it is equivalent to 
$$
\llbracket\bar{m}+\bar{\theta}i\rrbracket_\pm
\cdot(1,0)=0,
$$
which is satisfied as well.

Finally, the conservation of mass is equivalent to
$$
(1\mp\bar{\theta}_{\text{mix}})h_\pm
=\frac{1-\bar{\theta}_{\text{mix}}^2}{1-\bar{\theta}_{\text{mix}}A}(\gamma+\tfrac{1}{2})
$$
which determine $h_\pm$ in terms of $\bar{\theta}_{\text{mix}}$ and $\gamma$.
We observe that the quantity
$$
h_+h_-
=\frac{1-\bar{\theta}_{\text{mix}}^2}{(1-\bar{\theta}_{\text{mix}}A)^2}(\gamma+\tfrac{1}{2})^2
$$
is maximized for $\bar{\theta}_{\text{mix}}=A$ and $\gamma\to\tfrac{1}{2}$. Hence, $h_\pm=1/(1\mp A)$.

\subsection{Fully unstable interfaces}\label{sec:sketch:fully}

In this subsection, we outline how to extend the construction
of Section~\ref{sec:sketch:flat} to more general curves using
the Saffman--Taylor formulation from Section~\ref{sec:sketch:ST}.
We first describe how to solve the macroscopic equations
for a prescribed boundary motion, and then explain how this
motion ensures the strict hull condition.

Motivated by the flat-interface construction in Section~3.2,
we set $\bar\theta=\pm1$ in the pure phases and $\bar\theta=A$
in the mixing zone, bounded by
\[
    z_a(t,\alpha)
    =
    z(t,\alpha)
    +\frac{a}{1-aA}\,t c(\alpha)\tau(\alpha)^\perp,
    \qquad a=\pm1.
\]
Here
$\tau=\partial_\alpha z^\circ/|\partial_\alpha z^\circ|$,
and both the central curve $z$ and the positive opening speed $c$
remain to be chosen.
The factors $1/(1-aA)$ describe the asymmetric opening induced
by the viscosity jump.
With the orientation convention of Section~\ref{sec:sketch:ST},
the phase jumps are
$\llbracket\bar\theta\rrbracket_a=-(1-aA)$.

We introduce an unknown scalar potential $g$ in the mixing zone and prescribe
\[
    \bar m
    =
    \bar\theta\,\bar u
    -
    \frac{1-\bar\theta^2}{1-A\bar\theta}
    \left(
        \nabla^\perp g+\frac12(A\bar u+i)
    \right).
\]
This gives $\bar m=\bar\theta\bar u$ in the pure phases
and reduces the strict hull condition in the mixing zone to
\[
    2|\nabla g|<|A\bar u+i|.
\]
The geometry, phase, and momentum are introduced in
Sections~\ref{section4.1}--\ref{sec:momentum};
the hull reduction is established in Section~\ref{sec:hullcondition}.

\subsubsection{Solving (mIPMa)}

The phase conservation \eqref{mIPMa} reduces to \eqref{mIPMb} away from the boundaries.
Across each $z_a$, it requires
\[
    \left(
        \llbracket\bar\theta\rrbracket_a\,\partial_t z_a
        -
        \llbracket\bar m\rrbracket_a
    \right)
    \cdot\partial_\alpha z_a^\perp=0.
\]
This is the counterpart of the kinematic condition
in Section~\ref{sec:IPMa}.
Here the flux is determined by $\bar m$,
so the mixing boundaries need not move with the normal fluid velocity.

Let $\bar U_a$ denote the velocity trace on $z_a$
from the pure phase $\Omega_a$, and set $G_a=g\circ z_a$.
Assuming continuity of the normal velocity,
which will be imposed below, the jump condition becomes
\[
    a\,\partial_\alpha G_a
    =
    \left(
        (1-aA)(\partial_t z_a-\bar U_a)
        -\frac a2(A\bar U_a+i)
    \right)
    \cdot\partial_\alpha z_a^\perp.
\]
Thus the boundary motion and the pure-phase velocity traces
determine the tangential derivatives of $G_\pm$.
We normalize $G_\pm(t,0)=0$ and interpolate linearly across the strip:
\[
    G(t,\alpha,\lambda)
    =
    \frac{1+\lambda}{2}G_+(t,\alpha)
    +
    \frac{1-\lambda}{2}G_-(t,\alpha),
    \qquad -1\le\lambda\le1.
\]
This defines $g$ through $g\circ Z=G$, where
\[
    Z(t,\alpha,\lambda)
    =
    z(t,\alpha)
    +t\,\frac{\lambda+A}{1-A^2}\,
    c(\alpha)\tau(\alpha)^\perp.
\]
parametrizes the mixing zone, that is $\Omega_{\textrm{mix}}(t)=Z(t,\R\times(-1,1))$.
The boundary conditions and interpolation are developed
in Section~\ref{sec:potential}.

\subsubsection{Solving (mIPMb)}
\label{sec:sketch-incompressibility}

Darcy's law, combined with the momentum ansatz and
incompressibility, requires the pressure to be harmonic
within each region. Indeed, the coefficients are constant
there and the correction $\nabla^\perp g$ is divergence-free.
As in Section~\ref{sec:sketch:ST}, we therefore represent
the pressure gradient by Saffman--Taylor densities
supported on the two boundaries:
\[
    (\nabla\bar{p})^*(t,x)
    =
    \sum_{b=\pm1}\frac{1}{2\pi}
    \int_{\mathbb R}
    \frac{\sigma_b(t,\beta)}
    {x-z_b(t,\beta)}\,d\beta.
\]
Here the integral is understood in the principal value sense
at infinity, and boundary traces are obtained by the
Sokhotski--Plemelj formulas.

It remains to impose continuity of the normal velocity
across $z_\pm$. Using Darcy's law and phase conservation,
these conditions eliminate $\partial_\alpha G_a$ and give
\[
    \sigma_a
    =
    \llbracket\bar\theta\rrbracket_a
    (A\partial_t z_a+i)
    \cdot\partial_\alpha z_a^\perp,
\]
the analogue of the classical Saffman--Taylor
identity~\eqref{eq:sigma:zt}.
Thus the boundary motion determines the pressure densities
and hence the pressure.
These prescriptions are verified in
Section~\ref{sec:pressure}.

\subsubsection{Solving (mIPMc)}

Once the pressure is determined, Darcy's law gives
the velocity explicitly. Substituting the momentum ansatz
yields
\[
    \bar u
    =
    \begin{cases}
        \displaystyle
        -\frac{\nabla\bar{p}+ai}{1+aA},
        &\text{in }\Omega_a,\quad a=\pm1,
        \\[8pt]
        \displaystyle
        \frac{A\nabla^\perp g-\nabla\bar{p}-\frac A2i}
        {1+\frac{A^2}{2}},
        &\text{in }\Omega_{\mathrm{mix}}.
    \end{cases}
\]
The pure-phase velocity traces depend only on the pressure
and can therefore be computed before extending $g$
into the mixing zone. The potential affects the interior
velocity through the explicit local term $A\nabla^\perp g$.
These formulas are given in Section~\ref{sec:velocity}.

The pure-phase velocity traces then determine the boundary
potentials $G_\pm$ through phase conservation.
Their interpolation, described in
Section~\ref{sec:potential}, completes the construction
of the macroscopic fields.

\subsubsection{Compatibility conditions and the hull condition}

The remaining task is to choose the boundary motion and opening speed
so that the potential satisfies the strict hull inequality.
Following \cite{ForsterSzekelyhidi18,CFM22}, we declare a second order expansion in time of the central curve
\[
    z(t)
    =
    z^\circ+t z^{(1)}+\frac12t^2z^{(2)}.
\]
At first order, we solve the initial Saffman--Taylor equation
for $\sigma^{(0)}$ and set
\[
    z^{(1)}=-Q_0[\sigma^{(0)}],
\]
where $Q_0$ is the pressure operator on the initial interface.
Its normal component agrees with the initial Muskat velocity.
This choice gives the same leading tangential derivative
on both boundaries and hence
\[
    \partial_\alpha G
    =
    c|\partial_\alpha z^\circ|
    +\frac14\sigma^{(0)}
    +O(t).
\]
The second-order coefficient $z^{(2)}$ is chosen to cancel
the first temporal derivative of
$\partial_\alpha G_+-\partial_\alpha G_-$ at $t=0$.
Together with the primitive estimates, this yields
\[
    \partial_\lambda G
    =
    \frac12(G_+-G_-)
    =
    O(t^2).
\]
This extra order is essential:
the strip has width proportional to $t$,
so the physical transverse derivative involves
$t^{-1}\partial_\lambda G$.
The resulting gradient expansion is
\[
    \nabla g\circ Z
    =
    \left(
        c+\frac{\sigma^{(0)}}{4|\partial_\alpha z^\circ|}
    \right)\tau
    +O(t).
\]
The first- and second-order compatibility conditions are derived
in Sections~5.2--5.3.
Both steps involve the same Saffman--Taylor inversion,
stated in Proposition~\ref{p:inverse};
Section~\ref{section6} establishes the solvability
and regularity of the coefficients.
The uniform estimates above are proved in Sections~\ref{section7.1}--\ref{section7.2}.
Section~\ref{section7.3} supplies the additional ingredient
required by the viscosity jump, a uniform positive lower bound for $|A\bar u+i|$
throughout the mixing zone.

We finally choose
\[
    c
    =
    -\bar c\,\frac{\sigma^{(0)}}{|\partial_\alpha z^\circ|},
    \qquad
    0<\bar c<\frac12.
\]
Uniform instability makes $c$ uniformly positive.
Combining these identities give
the strict hull inequality for sufficiently small times.
Section~\ref{section7.4} completes the subsolution verification
and applies the h-principle.
The operator estimates underlying the argument
are collected in Section~\ref{section8}.

\subsubsection{Why the pressure formulation?}

The pressure remains harmonic in each region,
whereas the mixing zone generally carries interior vorticity. Namely, the curl of \eqref{mIPMc} reads as
\[
    \left(1+\frac{A^2}{2}\right)
    \operatorname{curl}\bar u
    =
    A\Delta g.
\]
A direct vorticity formulation would therefore couple
the velocity reconstruction to the interior extension of $g$.
The Darcy jump conditions would also involve
its normal derivatives at the boundaries.
The pressure construction avoids this coupled reconstruction.
It determines the pure-phase velocity traces before extending $g$,
leaving the explicit interpolation available for the hull estimates.

\section{The subsolution}\label{section4}
In this section, we give the subsolution ansatz outlined in
Section~\ref{sec:sketch:fully}. We first define the mixing zone
and the macroscopic phase, momentum, pressure, and velocity.
We then construct the potential $g$ to ensure phase conservation
and continuity of the normal velocity, and express the hull
condition in terms of its gradient. The choice of the interface
motion and the verification of this condition are left to the
subsequent sections.

\subsection{The mixing zone}\label{section4.1}
Let $z^\circ$ be the initial chord--arc curve, with the orientation convention of Section~\ref{section3}, and
$z^\circ-(\alpha,0)\in H^k(\mathbb R;\mathbb R^2)$ with
$k\geq5$. Let
\[
    \tau(\alpha):=
    \frac{\partial_\alpha z^\circ(\alpha)}
    {|\partial_\alpha z^\circ(\alpha)|}
\]
denote its unit tangent. We fix a function
$c\in C^1(\mathbb R)$ satisfying
\[
    0<c_*\leq c(\alpha)\leq c^*<\infty.
\]
The choice of $c$ will be specified in Section~\ref{section5}.

Consider a central curve of the form
\[
    z(t,\alpha)=z^\circ(\alpha)+t\,z^{(1)}(\alpha)
    +\frac{t^2}{2}\,z^{(2)}(\alpha),
\]
where $z^{(1)},z^{(2)}\in C^1$ are the first- and
second-order coefficients of the expansion in time.
For $\lambda\in[-1,1]$, define
\begin{equation}\label{eq:mixing-coordinates}
    Z(t,\alpha,\lambda)=z_\lambda(t,\alpha)
    :=z(t,\alpha)+t\,h(\lambda)c(\alpha)\tau^\perp(\alpha),
    \qquad
    h(\lambda):=\frac{\lambda+A}{1-A^2}.
\end{equation}
In particular, the two boundary curves are
\begin{equation}\label{eq:mixing-boundaries}
    z_a(t,\alpha)
    =z(t,\alpha)+\frac{a}{1-aA}\,
      t\,c(\alpha)\tau^\perp(\alpha),
    \qquad a=\pm1.
\end{equation}
Thus the opening is prescribed along the initial normal,
with different speeds on the two sides when $A\neq0$.
For $0<t\leq T$, we define
\[
    \Omega_{\mathrm{mix}}(t)
    :=Z\bigl(t,\mathbb R\times(-1,1)\bigr).
\]
We denote by $\Omega_+(t)$ the region to the left of $z_+(t)$,
and by $\Omega_-(t)$ the region to the right of $z_-(t)$.
These are the two exterior regions.

At $t=0$, both boundary curves coincide with $z^\circ$,
and the mixing region collapses to the initial interface.

\begin{figure}[htbp]
    \centering
    \includegraphics[width=0.84\textwidth]{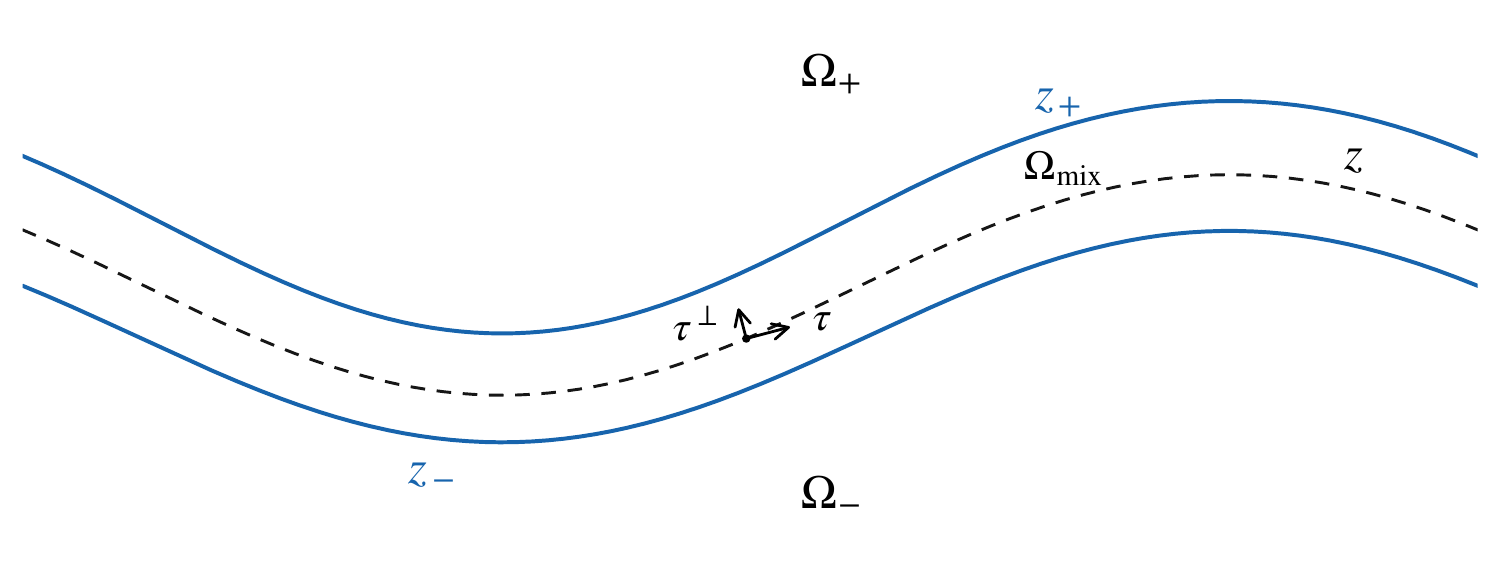}
    \caption{%
        Geometry of the mixing zone $\Omega_{\mathrm{mix}}$,
        with the same boundaries $z_\pm$ as in
        Figure~\ref{fig:mixing-fingers}.
        The central curve $z$ is dashed.
        The initial unit tangent $\tau$ and normal
        $\tau^\perp$ are displayed at the corresponding
        point of $z$.
        The exterior regions are denoted by
        $\Omega_+$ and $\Omega_-$.
    }
    \label{fig:mixing-geometry}
\end{figure}

The following geometric observation ensures that these curves delimit
a mixing region for sufficiently small positive times.
It follows by the same argument as \cite[Lemma~B.1]{CFM22}, with the affine
transverse parameter $h(\lambda)$ accounting for the asymmetric opening.

\begin{lemma}\label{lem:mixing-geometry}
Let $z$ and $c$ be as above. Then there exist $T>0$
and constants $d_0,d_1>0$ such that, for every $t\in(0,T]$,
$\alpha,\beta\in\mathbb R$, and $\lambda,\mu\in[-1,1]$,
\begin{equation}\label{eq:mixing-transversality}
    \partial_\alpha z_\lambda(t,\alpha)\cdot\tau(\alpha)
    \geq d_0,
\end{equation}
and
\begin{equation}\label{eq:mixing-chord-arc}
    |z_\lambda(t,\alpha)-z_\mu(t,\beta)|
    \geq d_1\bigl(|\alpha-\beta|+t|\lambda-\mu|\bigr).
\end{equation}
Consequently, $Z(t,\cdot,\cdot)$ is a $C^1$ diffeomorphism
from $\mathbb R\times(-1,1)$ onto $\Omega_{\mathrm{mix}}(t)$.
The curves $z_+(t)$ and $z_-(t)$ are disjoint chord--arc curves
and form the boundary of the mixing zone.
\end{lemma}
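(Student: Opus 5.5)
The transversality bound \eqref{eq:mixing-transversality} is a perturbative consequence of the chord--arc property of $z^\circ$. Differentiating \eqref{eq:mixing-coordinates} gives
\[
\partial_\alpha z_\lambda=\partial_\alpha z^\circ+t\,\partial_\alpha z^{(1)}+\tfrac{t^2}{2}\partial_\alpha z^{(2)}+t\,h(\lambda)\,\partial_\alpha\bigl(c\,\tau^\perp\bigr).
\]
Since $\partial_\alpha z^\circ\cdot\tau=|\partial_\alpha z^\circ|$, the chord--arc condition gives $|\partial_\alpha z^\circ|\ge d$ for some $d>0$. The remaining terms are bounded in $L^\infty$ by $C t$, uniformly in $\lambda\in[-1,1]$, because $|h(\lambda)|\le 1/(1-|A|)$. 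This uses $z^{(j)}\in C^1$ with bounded derivatives, $c\in C^1$ bounded, and $\partial_\alpha\tau\in L^\infty$; the last holds because $z^\circ-(\alpha,0)\in H^5\subset C^4$ and $|\partial_\alpha z^\circ|\ge d$. (I will assume the coefficients have bounded $C^1$ norms, as they will be constructed in $H^k$ spaces.) Choosing $T$ with $CT\le d/2$ then gives \eqref{eq:mixing-transversality} with $d_0=d/2$.

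For \eqref{eq:mixing-chord-arc}, I would split into a near-diagonal and a far regime, following \cite[Lemma~B.1]{CFM22}. Write
\[
z_\lambda(t,\alpha)-z_\mu(t,\beta)=\bigl(z^\circ(\alpha)-z^\circ(\beta)\bigr)+t\bigl(w(\alpha)-w(\beta)\bigr)+t\bigl(h(\lambda)-h(\mu)\bigr)c(\beta)\tau^\perp(\beta)+E,
\]
where $w$ collects the $C^1$ perturbation terms (including $h(\lambda)\bigl(c\tau^\perp(\alpha)-c\tau^\perp(\beta)\bigr)$, since $h(\lambda)$ is bounded). The perturbation satisfies $|t(w(\alpha)-w(\beta))|\le Ct|\alpha-\beta|$.

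In the regime $|\alpha-\beta|\le\delta$, I would Taylor expand $z^\circ$ about $\beta$:
\[
z^\circ(\alpha)-z^\circ(\beta)=(\alpha-\beta)\,|\partial_\alpha z^\circ(\beta)|\,\tau(\beta)+O(|\alpha-\beta|^2).
\]
This term is essentially tangential, while the opening term $t\,(\lambda-\mu)\,c(\beta)/(1-A^2)$ points along $\tau^\perp(\beta)$ and has size at least $c_*t|\lambda-\mu|/(1-A^2)$, because $h$ is affine with slope $1/(1-A^2)$. Projecting the difference onto $\tau(\beta)$ and onto $\tau^\perp(\beta)$ separately gives
\[
|\cdot|\ge c'\bigl(|\alpha-\beta|+t|\lambda-\mu|\bigr)-C\bigl(\delta+t\bigr)|\alpha-\beta|.
\]
For $\delta$ and $T$ small, this is bounded below by $\tfrac{c'}{2}\bigl(|\alpha-\beta|+t|\lambda-\mu|\bigr)$.

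In the regime $|\alpha-\beta|\ge\delta$, the chord--arc property gives $|z^\circ(\alpha)-z^\circ(\beta)|\ge d|\alpha-\beta|$. The perturbations are at most $Ct|\alpha-\beta|$, and the opening term is at most $Ct\le Ct|\alpha-\beta|/\delta$. The right-hand side $t|\lambda-\mu|\le 2t\le 2t|\alpha-\beta|/\delta$ is dominated by $|\alpha-\beta|$. Hence for $T\ll\delta$ we obtain the bound with $d_1\sim d/4$.

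For the consequences, $Z(t,\cdot,\cdot)$ is $C^1$, and \eqref{eq:mixing-chord-arc} shows it is injective. Its Jacobian is $\det(\partial_\alpha z_\lambda,\partial_\lambda z_\lambda)$ with $\partial_\lambda z_\lambda=t\,c\,\tau^\perp/(1-A^2)$. This determinant equals $t\,c\,(\partial_\alpha z_\lambda\cdot\tau)/(1-A^2)\ge t\,c_*d_0/(1-A^2)>0$ by \eqref{eq:mixing-transversality}. The inverse function theorem then makes $Z$ a $C^1$ diffeomorphism onto its (open) image $\Omega_{\mathrm{mix}}(t)$.

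Taking $\lambda=\mu=\pm1$ in \eqref{eq:mixing-chord-arc} shows that $z_\pm$ are chord--arc curves. Taking $\lambda=1$, $\mu=-1$ gives $|z_+-z_-|\ge 2d_1t>0$, so the curves are disjoint. The boundary of the image is the image of $\lambda=\pm1$; this uses properness of $Z$, which follows from the lower bound together with asymptotic flatness. The main obstacle is the near-diagonal regime, where the tangential and normal separations must be balanced carefully to get the joint lower bound uniformly in $t\to0$. The key point is that the opening is prescribed along the fixed initial normal $\tau^\perp$, so the normal projection kills the leading tangential difference exactly, up to $O(|\alpha-\beta|^2)$.
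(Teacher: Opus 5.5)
Your proposal is correct and follows essentially the argument the paper has in mind. The paper simply defers to \cite[Lemma~B.1]{CFM22}: transversality by perturbing $|\partial_\alpha z^\circ|\ge c_0$, and the joint chord--arc bound by a near/far splitting, in which the affine parameter $h$ produces the normal separation $t|\lambda-\mu|c/(1-A^2)$ along $\tau^\perp$. As you note, this needs $z^{(1)},z^{(2)},c$ to have bounded $C^1$ norms (the paper's ``$C^1$'' must be read this way, and it holds for the $H^{k-2}$ coefficients used later), and the term $E$ in your decomposition is simply zero.
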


\subsection{The phase}\label{sec:phase}

We prescribe the macroscopic phase by
\begin{equation}\label{eq:phase}
    \overline\theta=
    \begin{cases}
        \pm1,&\text{in }\Omega_\pm,\\
        A,&\text{in }\Omega_{\mathrm{mix}}.
    \end{cases}
\end{equation}
With our convention for jumps,
$\llbracket\overline\theta\rrbracket_a=-(1-aA)$.

\subsection{The momentum}\label{sec:momentum}

We define
\begin{equation}\label{eq:momentum}
    \overline m
    =
    \overline\theta\,\overline u
    -\frac{1-\overline\theta^2}{1-A\overline\theta}
    \left(\nabla^\perp g+\frac12(A\overline u+i)\right),
\end{equation}
where $g$ is a scalar potential in the mixing region, to be determined.
We require $g\in C^1(\Omega_{\mathrm{mix}})$ with continuous boundary traces
\[
    (\nabla^\perp g)_a(t,\alpha)
    =
    \lim_{\Omega_{\mathrm{mix}}(t)\ni x\to z_a(t,\alpha)}
    \nabla^\perp g(t,x).
\]
In the pure-phase regions, \eqref{eq:momentum} is understood
as $\overline m=\pm\overline u$.
In the mixing region it reads
\[
    \overline m
    =A\overline u-\nabla^\perp g-\frac12(A\overline u+i).
\]

\subsection{The pressure}\label{sec:pressure}

Following the pressure formulation of Section~\ref{section3},
we prescribe the boundary densities
\begin{equation}\label{eq:boundary-densities}
\sigma_a
    =
    -(1-aA)(A\partial_tz_a+i)\cdot\partial_\alpha z_a^\perp.
\end{equation}
The pressure gradient is represented by
\begin{equation}\label{eq:pressure-gradient}
    (\nabla\overline p)^*(t,x)
    =
    \sum_{b=\pm1}\frac1{2\pi}
    \int_{\mathbb R}
    \frac{\sigma_b(t,\beta)}
    {x-z_b(t,\beta)}\,d\beta,
\end{equation}
where the principal value at infinity is understood.

By the Sokhotski--Plemelj formulas, the boundary values are
\begin{equation}\label{eq:pressure-traces}
    \nabla\overline p\longrightarrow
    \begin{cases}
        \displaystyle
        Q_a+\frac{ai\sigma_a}{2\partial_\alpha z_a^*},
        &\text{from }\Omega_a,\\[6pt]
        \displaystyle
        Q_a-\frac{ai\sigma_a}{2\partial_\alpha z_a^*},
        &\text{from }\Omega_{\mathrm{mix}},
    \end{cases}
\end{equation}
where
\[
    Q_a^*(t,\alpha)
    :=
    \sum_{b=\pm1}\frac1{2\pi}
    \int_{\mathbb R}
    \frac{\sigma_b(t,\beta)}
    {z_a(t,\alpha)-z_b(t,\beta)}\,d\beta.
\]

\subsection{The velocity}\label{sec:velocity}

We define $\overline u$ through the macroscopic Darcy law.
Substituting \eqref{eq:momentum}, this becomes
\begin{equation}\label{eq:velocity}
    (1+AC(\overline\theta))\overline u
    +iC(\overline\theta)
    -A\frac{1-\overline\theta^2}{1-A\overline\theta}\nabla^\perp g
    =-\nabla\overline p,
\end{equation}
where
\[
    C(\overline\theta)
    =
    \overline\theta
    -\frac{A(1-\overline\theta^2)}
    {2(1-A\overline\theta)}.
\]
Thus $C(\pm1)=\pm1$ and $C(A)=A/2$. Explicitly,
\[
    \overline u=
    \begin{cases}
        \displaystyle
        -\frac{\nabla\overline p\pm i}{1\pm A},
        &\text{in }\Omega_\pm,\\[8pt]
        \displaystyle
        \frac{A\nabla^\perp g-\nabla\overline p-\frac A2i}
        {1+\frac{A^2}{2}},
        &\text{in }\Omega_{\mathrm{mix}}.
    \end{cases}
\]
We denote by $\overline U_a$ the trace of $\overline u$
on $z_a$ from the pure-phase region $\Omega_a$.

Since $\overline p$ is harmonic within each region,
$\overline u$ is divergence-free there. Its normal continuity
across $z_\pm$, and hence global incompressibility, follows
from \eqref{eq:boundary-densities}.

\subsection{The potential $G$}\label{sec:potential}

In the coordinates introduced in Section~\ref{section4.1}, define
\[
    G(t,\alpha,\lambda):=g(Z(t,\alpha,\lambda)).
\]
The chain rule gives
\[
    \partial_\alpha G
    =(\nabla g\circ Z)\cdot\partial_\alpha z_\lambda,
    \qquad
    \partial_\lambda G
    =\frac{c(\alpha)t}{1-A^2}
    (\nabla g\circ Z)\cdot\tau^\perp.
\]
Hence
\begin{equation}\label{eq:gradient-G}
    \nabla g\circ Z
    =
    \frac1{\partial_\alpha z_\lambda\cdot\tau}
    \left(
        \partial_\alpha G\,\tau
        +\frac{1-A^2}{c(\alpha)t}
        \partial_\lambda G\,\partial_\alpha z_\lambda^\perp
    \right).
\end{equation}

Write $G_a(t,\alpha):=G(t,\alpha,a)$. Then
\[
    \partial_\alpha G_a
    =(\nabla^\perp g)_a\cdot\partial_\alpha z_a^\perp.
\]
The phase conservation equation requires
\[
    \bigl(
        \llbracket\overline\theta\rrbracket_a\partial_tz_a
        -\llbracket\overline m\rrbracket_a
    \bigr)\cdot\partial_\alpha z_a^\perp=0,
\]
where
\[
    \llbracket\overline m\rrbracket_a
    \cdot\partial_\alpha z_a^\perp
    =
    -\left(
        (1-aA)\overline U_a+\frac a2(A\overline U_a+i)
    \right)\cdot\partial_\alpha z_a^\perp
    -a\partial_\alpha G_a.
\]
Thus, we obtain the boundary conditions for $G$
\begin{equation}\label{eq:potential-boundary-condition}
    a\partial_\alpha G_a
    =
    \left(
        (1-aA)(\partial_tz_a-\overline U_a)
        -\frac a2(A\overline U_a+i)
    \right)\cdot\partial_\alpha z_a^\perp.
\end{equation}
Together with \eqref{eq:boundary-densities},
direct substitution into the velocity and momentum traces
shows that this prescription satisfies both normal-velocity
continuity and the phase conservation jump condition.

We normalize $G_a(t,0)=0$ and interpolate linearly
between the two boundary values:
\begin{equation}\label{eq:G-interpolation}
    G(t,\alpha,\lambda)
    =
    \sum_{a=\pm1}\frac{1+a\lambda}{2}
    \int_0^\alpha\partial_\alpha G_a(t,\beta)\,d\beta.
\end{equation}
This defines $g$ through $g\circ Z=G$.

\subsection{The hull condition}\label{sec:hullcondition}

In the pure phases, $\overline\theta=\pm1$ and $\overline m=\overline\theta\,\overline u$, so $(\overline\theta,\overline u,\overline m)\in K$. In the mixing region, $\overline\theta=A$ and \eqref{eq:momentum} gives
\[
2(1-A\overline\theta)(\overline m-\overline\theta\,\overline u)+(1-\overline\theta^2)(A\overline u+i)
=-2(1-A^2)\nabla^\perp g.
\]
Thus, the strict inequality defining $\mathcal O$ is equivalent to
\begin{equation}\label{eq:hull-condition}
2|\nabla g|<|A\overline u+i|\qquad\text{in }\Omega_{\mathrm{mix}}.
\end{equation}

\section{Contour dynamics}\label{section5}
Throughout Sections~\ref{section5}--\ref{section7}, until the completion
of the proof, $c$ is prescribed independently of $\sigma^{(0)}$ and satisfies
\begin{equation}\label{eq:c-assumptions}
    c-c_\infty\in H^{k-1}(\mathbb R).,
\end{equation}
for some $c_\infty>0$.
The particular opening speed will be chosen in
\eqref{eq:opening-speed}.

We seek the central curve in the form
\[
    z(t)=z^\circ+t z^{(1)}+\frac{t^2}{2}z^{(2)},
\]
and determine its coefficients through the boundary condition
\eqref{eq:potential-boundary-condition}.

\subsection{Asymptotics of the normal velocity traces}

The pure-phase Darcy law \eqref{eq:velocity} and the pressure trace formula \eqref{eq:pressure-traces} give
\begin{equation}\label{eq:normal-velocity-trace}
    (1+aA)\overline U_a\cdot\partial_\alpha z_a^\perp
    =
    -Q_a\cdot\partial_\alpha z_a^\perp
    -\frac a2\sigma_a
    -a\,i\cdot\partial_\alpha z_a^\perp.
\end{equation}

We compute the first two temporal coefficients of this expression.
The calculations below are performed for sufficiently regular curves
and densities; their justification and the uniform remainder estimates
are provided in Section~\ref{section6}.
The total Saffman-Taylor density
\[
    \sigma=\sigma_++\sigma_-
\]
satisfies
\begin{equation}\label{eq:sigma}
    \sigma
    =-2(A\partial_t z+i)\cdot\partial_\alpha z^\perp
    -t\frac{2A}{1-A^2}
    c\tau^\perp\cdot\partial_\alpha(c\tau^\perp)^\perp.
\end{equation}
From \eqref{eq:mixing-boundaries} we have
\[
    z_a^{(1)}
    =z^{(1)}+\frac{a}{1-aA}c\tau^\perp.
\]
We use the operators associated with the initial curve,
\begin{equation} \label{Q-P}
    Q_0^*[f](\alpha)
    =
    \frac1{2\pi}\operatorname{pv}
    \int_{\mathbb R}
    \frac{f(\beta)}
    {z^\circ(\alpha)-z^\circ(\beta)}\,d\beta,
    \qquad
    P[f]=2Q_0[f]\cdot\partial_\alpha z^{\circ\perp}.
\end{equation}

\begin{lemma}
\label{lem:normal-velocity-traces}
The initial normal traces and their first temporal derivatives are
\begin{equation}\label{eq:normal-velocity-coefficients}
\begin{aligned}
    \bigl(\overline U_a\cdot\partial_\alpha z_a^\perp\bigr)^{(0)}
    &=
    -\frac1{1+aA}
    \left(
        \frac12(P+aI)[\sigma^{(0)}]
        +a\,\partial_\alpha z^\circ_1
    \right),
    \\[4pt]
    \bigl(\overline U_a\cdot\partial_\alpha z_a^\perp\bigr)^{(1)}
    &=
    -\frac1{1+aA}
    \left(
        \frac12(P+aI)[\sigma^{(1)}]
        +r_a
    \right),
\end{aligned}
\end{equation}
where
\begin{equation}\label{eq:normal-velocity-remainder}
    r_a
    =
    R_a\cdot\partial_\alpha z^{\circ\perp}
    +
    \bigl(Q_0[\sigma^{(0)}]+ai\bigr)
    \cdot\partial_\alpha z_a^{(1)\perp},
\end{equation}
and
\begin{equation}\label{eq:pressure-trace-remainder}
\begin{aligned}
    R_a^*={}&
    z_a^{(1)}Q_0^*\!\left[
        \partial_\alpha\!\left(
            \frac{\sigma^{(0)}}{\partial_\alpha z^\circ}
        \right)\right]
    -\sum_{b=\pm1}Q_0^*\!\left[
        \partial_\alpha\!\left(
            \frac{z_b^{(1)}\sigma_b^{(0)}}
            {\partial_\alpha z^\circ}
        \right)\right]
-\frac{i}{(1-A^2)\partial_\alpha z^\circ}
    \partial_\alpha\!\left(
        \frac{c\tau^\perp\sigma_{-a}^{(0)}}
        {\partial_\alpha z^\circ}
    \right).
\end{aligned}
\end{equation}
In particular, $r_a$ depends only on $z^\circ$, $c$, $z^{(1)}$,
and the initial densities, and is independent of $\sigma^{(1)}$
and $z^{(2)}$.
\end{lemma}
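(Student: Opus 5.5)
The plan is to start from the pure-phase trace identity \eqref{eq:normal-velocity-trace},
\[
(1+aA)\overline U_a\cdot\partial_\alpha z_a^\perp=-Q_a\cdot\partial_\alpha z_a^\perp-\tfrac a2\sigma_a-a\,i\cdot\partial_\alpha z_a^\perp,
\]
and to expand each term in $t$ to first order.

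\textbf{Zeroth order.} At $t=0$ both curves coincide with $z^\circ$, so $z_a^{(0)}=z^\circ$ for both values of $a$. Hence
\[
Q_a^{(0)}=\sum_b Q_0[\sigma_b^{(0)}]=Q_0[\sigma^{(0)}].
\]
Definition \eqref{Q-P} gives $Q_0[\sigma^{(0)}]\cdot\partial_\alpha z^{\circ\perp}=\frac12P[\sigma^{(0)}]$, and by the notation conventions $i\cdot\partial_\alpha z^{\circ\perp}=\operatorname{Re}(i\cdot(i\partial_\alpha z^\circ)^*)=\partial_\alpha z^\circ_1$. The $\sigma$ term is where care is needed: it involves $\sigma_a^{(0)}$, not $\sigma^{(0)}$. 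I will use \eqref{eq:boundary-densities} at $t=0$, where it reduces to $\sigma_a^{(0)}=-(1-aA)(Az_a^{(1)}+i)\cdot\partial_\alpha z^{\circ\perp}$. I expect the intended reading is that, modulo the relation fixing $z^{(1)}$ (the first-order compatibility, under which the jump in $\sigma_a^{(0)}$ is absorbed), the combination $-\frac a2\sigma_a$ together with the $Q$ term recombines into $\frac12(P+aI)[\sigma^{(0)}]$. So I will check the $\sigma$-term bookkeeping explicitly: either $\sigma_a$ is split as $\frac12\sigma\pm$ correction and the correction cancels against the $Q$ contributions, or the statement uses $\sigma^{(0)}$ on both sides. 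Dividing by $1+aA$ then gives the first identity.

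\textbf{First order.} I differentiate in $t$ at $t=0$. The product rule gives
\[
Q_a^{(1)}\cdot\partial_\alpha z^{\circ\perp}+Q_a^{(0)}\cdot\partial_\alpha z_a^{(1)\perp}+\tfrac a2\sigma_a^{(1)}+a\,i\cdot\partial_\alpha z_a^{(1)\perp}.
\]
The second and fourth terms together form $(Q_0[\sigma^{(0)}]+ai)\cdot\partial_\alpha z_a^{(1)\perp}$, which is the explicit part of $r_a$.

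The heart of the proof is $Q_a^{(1)}$, which I split into two parts.

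\emph{Linear part.} Differentiating the densities gives the linear term $Q_0[\sigma_+^{(1)}+\sigma_-^{(1)}]=Q_0[\sigma^{(1)}]$. Dotted with $\partial_\alpha z^{\circ\perp}$, this yields $\frac12P[\sigma^{(1)}]$.

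\emph{Shape derivative.} Differentiating the curves gives the shape derivative of the Cauchy-type integral. For $b=a$ this is the standard first variation of the Birkhoff--Rott integral: write $(z_a(\alpha)-z_a(\beta))^{-1}$ and differentiate. This produces
\[
-\int\sigma_a^{(0)}\frac{z_a^{(1)}(\alpha)-z_a^{(1)}(\beta)}{(z^\circ(\alpha)-z^\circ(\beta))^2}\,d\beta.
\]
Writing $(z^\circ(\alpha)-z^\circ(\beta))^{-2}=-\partial_\beta\big[(z^\circ(\alpha)-z^\circ(\beta))^{-1}\big]/\partial_\beta z^\circ$ and integrating by parts gives the two $Q_0^*$ terms in \eqref{eq:pressure-trace-remainder}, namely the $z_a^{(1)}$ term and the $z_b^{(1)}\sigma_b^{(0)}$ term.

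For $b=-a$ the integrand is singular at $t=0$, because $z_a-z_{-a}$ is of size $t$. I will compute it as a limit. The limit from $\Omega_a$ versus the principal value differs by a Plemelj jump. Its $t$-derivative is obtained by differentiating the jump term $\mp\frac{i\sigma_{-a}}{2\partial_\alpha z_{-a}^*}$, transported along the normal displacement $\frac{t c\tau^\perp}{1-A^2}$ between the two boundaries. Taylor expanding the Cauchy integral of $\sigma_{-a}$ off its own curve uses the derivative of the Cauchy integral, which equals the Cauchy integral of $\partial_\alpha(\sigma/\partial_\alpha z)$. This produces the last term of $R_a$, with factor $1/((1-A^2)\partial_\alpha z^\circ)$.

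\textbf{Main obstacle.} I expect the main difficulty to be making this cross-boundary expansion rigorous, since it is a uniform expansion of a Cauchy integral at a point approaching the curve at distance of order $t$. The needed regularity, $H^k$ densities with $k\ge5$, and the remainder estimates are deferred to Section~\ref{section6}, consistent with the remark preceding the lemma. Collecting terms and dividing by $1+aA$ gives the second identity. It is then evident that $r_a$ involves only $z^\circ$, $c$, $z^{(1)}$ and $\sigma_b^{(0)}$.
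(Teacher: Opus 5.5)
There is a genuine gap, and it is exactly the step you flagged as needing care. Your identity $Q_a^{(0)}=\sum_bQ_0[\sigma_b^{(0)}]=Q_0[\sigma^{(0)}]$ is false.

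Only the self-interaction $Q_{a,a}[\sigma_a]$ is a principal value on its own curve. For $t>0$ the cross term $Q_{a,-a}[\sigma_{-a}]$ is the Cauchy integral over $z_{-a}(t)$ evaluated at $z_a(t,\alpha)$. That point lies off the curve $z_{-a}$, on its mixing-zone side. As $t\to0^+$ the cross term therefore converges to a one-sided Plemelj trace, not to the principal value:
\[
Q_{a,-a}^*[f]\longrightarrow Q_0^*[f^{(0)}]-\frac{ai\,f^{(0)}}{2\partial_\alpha z^\circ}.
\]
Consequently $Q_a^{(0)}\cdot\partial_\alpha z^{\circ\perp}=\frac12P[\sigma^{(0)}]+\frac a2\sigma_{-a}^{(0)}$. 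Combined with $-\frac a2\sigma_a^{(0)}$, this extra term produces $-\frac a2\sigma^{(0)}$ and hence $\frac12(P+aI)[\sigma^{(0)}]$, for arbitrary $z^{(1)}$ and $c$.

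Your fallback, that the discrepancy is absorbed by the relation fixing $z^{(1)}$, cannot work. Even under the choice $z^{(1)}=-Q_0[\sigma^{(0)}]$, \eqref{eq:first-order-boundary-compatibility} gives $\sigma_{-a}^{(0)}=\frac{1+aA}{2}\sigma^{(0)}+aAc|\partial_\alpha z^\circ|\neq0$; for $A=0$ your formula would give $\frac12P[\sigma^{(0)}]+\frac a4\sigma^{(0)}$. Moreover, the compatibility $\partial_\alpha G_+^{(0)}=\partial_\alpha G_-^{(0)}$ is itself derived in Lemma~\ref{lem:first-order-compatibility} from this lemma, so invoking it here would be circular.

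The same omission breaks your first-order bookkeeping. The part of $Q_a^{(1)}\cdot\partial_\alpha z^{\circ\perp}$ linear in $\sigma^{(1)}$ is $\frac12P[\sigma^{(1)}]+\frac a2\sigma_{-a}^{(1)}$, not $\frac12P[\sigma^{(1)}]$. As written, you end up with $\frac12P[\sigma^{(1)}]+\frac a2\sigma_a^{(1)}$ instead of $\frac12(P+aI)[\sigma^{(1)}]$. Likewise, the product-rule term $Q_a^{(0)}\cdot\partial_\alpha z_a^{(1)\perp}$ carries the jump, so it equals $(Q_0[\sigma^{(0)}]+ai)\cdot\partial_\alpha z_a^{(1)\perp}$ only after that extra piece is accounted for elsewhere.

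The device that makes everything consistent, and which the paper uses, is the corrected trace
\[
\widetilde Q_a^*=Q_a^*+\frac{ai\sigma_{-a}}{2\partial_\alpha z_a}.
\]
It satisfies $\widetilde Q_a\cdot\partial_\alpha z_a^\perp=Q_a\cdot\partial_\alpha z_a^\perp-\frac a2\sigma_{-a}$ identically in $t$. The trace identity then becomes $-(\widetilde Q_a+ai)\cdot\partial_\alpha z_a^\perp-\frac a2\sigma$, with the total density $\sigma$.

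\begin{itemize}
\item At $t=0$ the correction cancels the Plemelj term, giving $\widetilde Q_a^{(0)}=Q_0[\sigma^{(0)}]$.
\item Applying the one-sided limit above to every term of \eqref{eq:pressure-operator-time-derivative} gives $\widetilde Q_a^{(1)}=Q_0[\sigma^{(1)}]+R_a$.
\item In that computation, the $\sigma_{-a}^{(1)}$ jump cancels the corresponding part of the time derivative of the correction.
\item The jump parts of the two shape terms, together with the $\partial_\alpha z_a^{(1)}$ part of that derivative, collapse to $-\frac{ai}{2\partial_\alpha z^\circ}\partial_\alpha\bigl((z_a^{(1)}-z_{-a}^{(1)})\sigma_{-a}^{(0)}/\partial_\alpha z^\circ\bigr)$.
\item This is the last term of $R_a$ once $z_a^{(1)}-z_{-a}^{(1)}=\frac{2a}{1-A^2}c\tau^\perp$ is used.
\end{itemize}

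Your shape-derivative and integration-by-parts treatment of the non-jump parts is consistent with this.
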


\begin{proof}
Introduce the corrected pressure traces
\[
    \widetilde Q_a^*
    =
    Q_a^*+\frac{ai\sigma_{-a}}{2\partial_\alpha z_a}.
\]
Then \eqref{eq:normal-velocity-trace} becomes
\begin{equation}\label{eq:corrected-normal-velocity}
    (1+aA)\overline U_a\cdot\partial_\alpha z_a^\perp
    =
    -\bigl(\widetilde Q_a+ai\bigr)
    \cdot\partial_\alpha z_a^\perp
    -\frac a2\sigma.
\end{equation}
We claim that
\begin{equation}\label{eq:corrected-pressure-coefficients}
    \widetilde Q_a^{(0)}=Q_0[\sigma^{(0)}],
    \qquad
    \widetilde Q_a^{(1)}=Q_0[\sigma^{(1)}]+R_a.
\end{equation}

Indeed, writing
\[
    Q_a^*=\sum_{b=\pm1}Q_{a,b}^*[\sigma_b],
    \qquad
    Q_{a,b}^*[f](t,\alpha)
    =
    \frac1{2\pi}\operatorname{pv}
    \int_{\mathbb R}
    \frac{f(t,\beta)}
    {z_a(t,\alpha)-z_b(t,\beta)}\,d\beta,
\]
the boundary limits are
\begin{equation}\label{eq:pressure-operator-initial-limit}
    Q_{a,b}^*[f]\longrightarrow
    Q_0^*[f^{(0)}]
    -\delta_{b,-a}
    \frac{ai f^{(0)}}{2\partial_\alpha z^\circ}.
\end{equation}
The correction defining $\widetilde Q_a$ cancels the local term,
proving the first identity in
\eqref{eq:corrected-pressure-coefficients}.

For the second identity, differentiation and integration by parts yield
\begin{equation}\label{eq:pressure-operator-time-derivative}
\begin{aligned}
    \partial_t\bigl(Q_{a,b}^*[f]\bigr)
    =Q_{a,b}^*[\partial_tf]
    +\partial_tz_a\,Q_{a,b}^*\!\left[
        \partial_\alpha\!\left(
            \frac f{\partial_\alpha z_b}
        \right)\right]
    -Q_{a,b}^*\!\left[
        \partial_\alpha\!\left(
            \frac{\partial_tz_b\,f}{\partial_\alpha z_b}
        \right)\right].
\end{aligned}
\end{equation}
Applying \eqref{eq:pressure-operator-initial-limit}
and differentiating the correction gives
\[
\begin{aligned}
    \bigl(\widetilde Q_a^*\bigr)^{(1)}
    ={}&Q_0^*[\sigma^{(1)}]
    +z_a^{(1)}Q_0^*\!\left[
        \partial_\alpha\!\left(
            \frac{\sigma^{(0)}}{\partial_\alpha z^\circ}
        \right)\right]
    -\sum_{b=\pm1}Q_0^*\!\left[
        \partial_\alpha\!\left(
            \frac{z_b^{(1)}\sigma_b^{(0)}}
            {\partial_\alpha z^\circ}
        \right)\right]
    \\
    &-\frac{ai}{2\partial_\alpha z^\circ}
    \partial_\alpha\!\left(
        \frac{(z_a^{(1)}-z_{-a}^{(1)})\sigma_{-a}^{(0)}}
        {\partial_\alpha z^\circ}
    \right).
\end{aligned}
\]
Using
\[
    z_a^{(1)}-z_{-a}^{(1)}
    =\frac{2a}{1-A^2}c\tau^\perp
\]
proves the second identity in
\eqref{eq:corrected-pressure-coefficients}.

Finally, evaluating \eqref{eq:corrected-normal-velocity}
and its temporal derivative at $t=0$, and using
\eqref{eq:corrected-pressure-coefficients} together with the
definition of $P$, gives
\eqref{eq:normal-velocity-coefficients}--%
\eqref{eq:normal-velocity-remainder}.
\end{proof}

The unknown coefficient $\sigma^{(1)}$ appears only through $P+aI$.
The remainder $r_a$ is expressed entirely through $Q_0$,
multiplication, and differentiation of quantities determined
at the preceding order.

\subsection{First order dynamics}

We choose the initial Saffman-Taylor density so that the two normal velocity traces
in Lemma~\ref{lem:normal-velocity-traces} have the same limit,
independently of $a$. We then prescribe the initial velocity of the
central curve to agree with this common normal trace.

\begin{lemma}
\label{lem:first-order-compatibility}
Let $\sigma^{(0)}$ solve
\begin{equation}\label{eq:first-order-density}
    (I-AP)\sigma^{(0)}
    =-2\,\partial_\alpha z^\circ_1,
\end{equation}
and set
\begin{equation}\label{eq:first-order-curve}
    z^{(1)}=-Q_0[\sigma^{(0)}].
\end{equation}
For any prescribed opening speed $c$, the initial boundary values satisfy
\begin{equation}\label{eq:first-order-boundary-compatibility}
    \partial_\alpha G_a^{(0)}
    =c|\partial_\alpha z^\circ|+\frac14\sigma^{(0)},
    \qquad
    \sigma_a^{(0)}=\frac{1-aA}{2}\sigma^{(0)}
    -aAc|\partial_\alpha z^\circ|.
\end{equation}
In particular, $\partial_\alpha G_+^{(0)}=\partial_\alpha G_-^{(0)}$.
\end{lemma}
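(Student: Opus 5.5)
The plan is to evaluate both defining relations at $t=0$ directly, using only the explicit geometry of the boundary curves, the definition of $P$ in \eqref{Q-P}, and the initial normal trace from Lemma~\ref{lem:normal-velocity-traces}. Throughout we use $\partial_\alpha z_a^{(0)}=\partial_\alpha z^\circ$. By \eqref{eq:mixing-boundaries},
\[
\partial_t z_a|_{t=0}=z_a^{(1)}=z^{(1)}+\frac{a}{1-aA}c\tau^\perp .
\]
We also need three elementary identities in the complex notation:
\[
i\cdot w^\perp=w_1,\qquad \tau^\perp\cdot\partial_\alpha z^{\circ\perp}=|\partial_\alpha z^\circ|,\qquad z^{(1)}\cdot\partial_\alpha z^{\circ\perp}=-Q_0[\sigma^{(0)}]\cdot\partial_\alpha z^{\circ\perp}=-\tfrac12P[\sigma^{(0)}],
\]
where the last one comes from \eqref{eq:first-order-curve} and the definition of $P$.

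\emph{Step 1: the densities.} We insert these identities into \eqref{eq:boundary-densities} at $t=0$. This gives
\[
\sigma_a^{(0)}=-(1-aA)\Bigl(-\tfrac A2P[\sigma^{(0)}]+\partial_\alpha z^\circ_1\Bigr)-aAc|\partial_\alpha z^\circ| .
\]
Now \eqref{eq:first-order-density} rewrites $\partial_\alpha z^\circ_1=-\tfrac12(\sigma^{(0)}-AP[\sigma^{(0)}])$. With this substitution the bracket collapses to $-\tfrac12\sigma^{(0)}$, which yields the claimed formula for $\sigma_a^{(0)}$. As a consistency check, summing over $a=\pm1$ gives $\sigma_+^{(0)}+\sigma_-^{(0)}=\sigma^{(0)}$, in agreement with \eqref{eq:sigma} at $t=0$.

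\emph{Step 2: the potential.} We evaluate \eqref{eq:potential-boundary-condition} at $t=0$:
\[
a\,\partial_\alpha G_a^{(0)}=(1-aA)z_a^{(1)}\cdot\partial_\alpha z^{\circ\perp}-\Bigl(1-aA+\tfrac{aA}2\Bigr)\bigl(\overline U_a\cdot\partial_\alpha z_a^\perp\bigr)^{(0)}-\tfrac a2\partial_\alpha z^\circ_1 .
\]
For the middle term we use the first line of \eqref{eq:normal-velocity-coefficients}. Note that $\bigl(\overline U_a\cdot\partial_\alpha z_a^\perp\bigr)^{(0)}$ involves $\sigma^{(0)}$ itself and not $\sigma_a^{(0)}$, so Step 1 is not needed here. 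The first term contributes $-\tfrac{1-aA}{2}P[\sigma^{(0)}]+ac|\partial_\alpha z^\circ|$. The factor $1/(1-aA)$ in the opening speed is exactly what makes the $c$ contribution independent of $a$ after multiplying by $a$.

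Every remaining term is then a combination of $\sigma^{(0)}$, $P[\sigma^{(0)}]$ and $\partial_\alpha z^\circ_1$ with coefficients rational in $A$ and $a$. We eliminate $\partial_\alpha z^\circ_1$ once more via \eqref{eq:first-order-density}. The main point, and the only real obstacle, is to verify that the coefficient of $P[\sigma^{(0)}]$ vanishes and that the coefficient of $\sigma^{(0)}$ becomes $a/4$ for both signs $a=\pm1$. This is where the specific solution of \eqref{eq:first-order-density} and the choice \eqref{eq:first-order-curve} enter. The first attempt will be to multiply through by $1+aA$ and use $a^2=1$ to collect terms. If a stray $P$ term survives, that would indicate a sign slip in the orientation convention $\llbracket\overline\theta\rrbracket_a=-(1-aA)$, and we would recheck the trace formulas \eqref{eq:pressure-traces}.

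Dividing by $a$ then gives $\partial_\alpha G_a^{(0)}=c|\partial_\alpha z^\circ|+\tfrac14\sigma^{(0)}$. Since the right-hand side does not depend on $a$, the equality $\partial_\alpha G_+^{(0)}=\partial_\alpha G_-^{(0)}$ follows immediately.
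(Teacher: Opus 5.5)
Your proposal is correct and follows the paper's route: evaluate the density prescription and \eqref{eq:potential-boundary-condition} at $t=0$, insert the first line of \eqref{eq:normal-velocity-coefficients}, and eliminate $\partial_\alpha z^\circ_1$ through \eqref{eq:first-order-density}. The algebra you left open does close, because \eqref{eq:first-order-density} gives $\bigl(\overline U_a\cdot\partial_\alpha z_a^\perp\bigr)^{(0)}=-\tfrac12P[\sigma^{(0)}]=z^{(1)}\cdot\partial_\alpha z^{\circ\perp}$ for both signs of $a$, so the remaining $P$-terms cancel and $a\,\partial_\alpha G_a^{(0)}=ac|\partial_\alpha z^\circ|+\tfrac a4\sigma^{(0)}$; one correction is that the $\sigma^{(0)}$ in Lemma~\ref{lem:normal-velocity-traces} is by definition the total initial density $\sigma_+^{(0)}+\sigma_-^{(0)}$, so your summed ``consistency check'' in Step~1 (which the paper does at the outset via \eqref{eq:sigma}) is exactly what licenses applying that lemma to the solution of \eqref{eq:first-order-density}, and it is therefore a required step rather than an optional one.
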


\begin{proof}
Equation \eqref{eq:sigma} at $t=0$ gives
\[
    \sigma^{(0)}
    =-2(Az^{(1)}+i)\cdot\partial_\alpha z^{\circ\perp},
\]
which is the initial total density associated with $z^{(1)}$.
Using \eqref{eq:first-order-curve}, we obtain
\eqref{eq:first-order-density}.
Substituting \eqref{eq:first-order-density} into
Lemma~\ref{lem:normal-velocity-traces} gives
\[
    \bigl(\overline U_a\cdot\partial_\alpha z_a^\perp\bigr)^{(0)}
    =-\frac12P[\sigma^{(0)}]
    =z^{(1)}\cdot\partial_\alpha z^{\circ\perp}.
\]
Evaluating \eqref{eq:potential-boundary-condition} at $t=0$,
we therefore obtain
\[
    \partial_\alpha G_a^{(0)}
    =c|\partial_\alpha z^\circ|+\frac14\sigma^{(0)},
\]
independently of $a$. The density prescription gives
\[
    \sigma_a^{(0)}
    =\frac{1-aA}{2}\sigma^{(0)}
    -aAc|\partial_\alpha z^\circ|,
\]
proving the remaining identity.
\end{proof}

The solvability of \eqref{eq:first-order-density}, with
$\sigma^{(0)}+2\in H^{k-1}$, is established in
Section~\ref{section6}.

\subsection{Second order dynamics}

The interpolation defining $G$ gives
\[
    \partial_\lambda G(t,\alpha,\lambda)
    =
    \frac12\int_0^\alpha
    \bigl(\partial_\alpha G_+-\partial_\alpha G_-\bigr)
    (t,\beta)\,d\beta.
\]
The equality of the two initial boundary derivatives in
Lemma~\ref{lem:first-order-compatibility} cancels the zeroth-order
coefficient of the integrand. We now choose $z^{(2)}$ to cancel
its first-order coefficient as well.

\begin{lemma}
\label{lem:second-order-compatibility}
Let $\sigma^{(1)}$ solve
\begin{equation}\label{eq:second-order-density}
    (I-AP)\sigma^{(1)}
    =
    \frac{2(1-A^2)}{2+A^2}
    \bigl(r_\sigma+A r_G\bigr).
\end{equation}
Prescribe $z^{(2)}$ as a curve normal to $z^\circ$ such that
\begin{equation}\label{eq:second-order-curve}
    z^{(2)}\cdot\partial_\alpha z^{\circ\perp}
    =
    -\frac12P[\sigma^{(1)}]
    +\frac{3A r_\sigma-2(1-A^2)r_G}{2(2+A^2)},
\end{equation}
where the remainders are
\begin{equation}\label{eq:density-remainder}
    r_\sigma
    :=
    -2(Az^{(1)}+i)\cdot\partial_\alpha z^{(1)\perp}
    -\frac{2A}{1-A^2}
    c\tau^\perp\cdot\partial_\alpha(c\tau^\perp)^\perp
\end{equation}
and
\begin{equation}\label{eq:potential-remainder}
    r_G
    :=
    \sum_{a=\pm1}
    \left[
        \left((1-aA)z_a^{(1)}-\frac a2i\right)
        \cdot\partial_\alpha z_a^{(1)\perp}
        +\frac{2-aA}{2(1+aA)}r_a
    \right],
\end{equation}
with $r_a$ given by Lemma~\ref{lem:normal-velocity-traces}.
Both remainders depend only on quantities determined at first order.

Then, it holds that
\begin{equation}\label{eq:second-order-boundary-compatibility}
    \partial_\alpha G_+^{(1)}
    =\partial_\alpha G_-^{(1)},
\end{equation}
and
\begin{equation}\label{eq:boundary-density-derivatives}
\begin{aligned}
    \sigma_a^{(1)}
    ={}&-(1-aA)A z^{(2)}\cdot\partial_\alpha z^{\circ\perp}\\
    &-\bigl((1-aA)(Az^{(1)}+i)+aAc\tau^\perp\bigr)
    \cdot\partial_\alpha z_a^{(1)\perp}.
\end{aligned}
\end{equation}
\end{lemma}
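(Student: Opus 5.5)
The proof will mirror that of Lemma~\ref{lem:first-order-compatibility}, one order higher: differentiate every defining relation once in time at $t=0$, collect the terms containing the new unknowns $\sigma^{(1)}$ and $z^{(2)}$, and gather everything else into the first-order remainders $r_\sigma$, $r_G$, $r_a$.

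\textbf{Step 1: the density $\sigma^{(1)}$ and the boundary densities.} First differentiate \eqref{eq:sigma} in time at $t=0$. Since $z^{(2)}$ is normal to $z^\circ$, the term $A z^{(2)}\cdot\partial_\alpha z^{\circ\perp}$ is the only contribution of $z^{(2)}$ (the $\partial_\alpha z^{(2)\perp}$ term has coefficient $\partial_t z$ paired at order $t$, hence does not appear at this order). This gives
\[
    \sigma^{(1)}=-2A\,z^{(2)}\cdot\partial_\alpha z^{\circ\perp}+r_\sigma .
\]
Differentiating \eqref{eq:boundary-densities} similarly, with $\partial_t z_a=z_a^{(1)}+t z^{(2)}+O(t^2)$ and $\partial_\alpha z_a=\partial_\alpha z^\circ+t\partial_\alpha z_a^{(1)}+\dots$, yields \eqref{eq:boundary-density-derivatives} directly. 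Here one uses $z_a^{(1)}=z^{(1)}+\frac{a}{1-aA}c\tau^\perp$ to rewrite $(1-aA)Az_a^{(1)}$ as $(1-aA)Az^{(1)}+aAc\tau^\perp$.

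\textbf{Step 2: the boundary potentials.} Differentiate \eqref{eq:potential-boundary-condition} at $t=0$, inserting the expression for $(\overline U_a\cdot\partial_\alpha z_a^\perp)^{(1)}$ from Lemma~\ref{lem:normal-velocity-traces}. The terms $\bigl(\overline U_a\cdot\partial_\alpha z_a^{(1)\perp}\bigr)$ coming from differentiating the normal $\partial_\alpha z_a^\perp$ need care: they must be written through the traces $(\overline U_a)^{(0)}$ and absorbed into $r_G$. This produces
\[
a\,\partial_\alpha G_a^{(1)}=(1-aA)z^{(2)}\cdot\partial_\alpha z^{\circ\perp}
+\frac{2-aA}{2(1+aA)}\Bigl(\tfrac12(P+aI)[\sigma^{(1)}]\Bigr)+(\text{the $a$-th summand of } r_G).
\]
The coefficient $\frac{2-aA}{2(1+aA)}$ arises as $(1-aA)+\frac{aA}{2}$ divided by $1+aA$.

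\textbf{Step 3: solving the two conditions.} The requirement \eqref{eq:second-order-boundary-compatibility} is $\sum_a a\,\partial_\alpha G_a^{(1)}=0$. Summing Step 2 over $a$ gives one linear relation in $w:=z^{(2)}\cdot\partial_\alpha z^{\circ\perp}$, $P[\sigma^{(1)}]$, $\sigma^{(1)}$ and $r_G$. The relation of Step 1 gives a second one. Eliminating $w$ yields an equation of the form $(I-AP)\sigma^{(1)}=\text{const}\cdot(r_\sigma+Ar_G)$. The algebra of the coefficients $\sum_a\frac{2-aA}{2(1+aA)}=\frac{2+A^2}{1-A^2}$ and $\sum_a a\frac{2-aA}{2(1+aA)}=\frac{-3A}{1-A^2}$ should produce exactly the constant $\frac{2(1-A^2)}{2+A^2}$ in \eqref{eq:second-order-density} and the formula \eqref{eq:second-order-curve}. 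The logic is then run in reverse: with $\sigma^{(1)}$ solving \eqref{eq:second-order-density} and $z^{(2)}$ defined by \eqref{eq:second-order-curve}, both relations hold, so the two derivatives $\partial_\alpha G_\pm^{(1)}$ coincide.

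\textbf{Main obstacle.} The expected difficulty is the bookkeeping in Step 2. One must track exactly which first-order pieces enter $r_G$, in particular the contributions of $\partial_\alpha z_a^{(1)\perp}$ paired with $\partial_t z_a-\overline U_a$ at order zero, and confirm that they match \eqref{eq:potential-remainder}. A further check is that the self-consistency between $\sigma^{(1)}$ as defined through Step 1 and the $\sigma^{(1)}$ entering Lemma~\ref{lem:normal-velocity-traces} closes with the stated constants; if it does not, I would recheck the sign conventions $\llbracket\overline\theta\rrbracket_a=-(1-aA)$.
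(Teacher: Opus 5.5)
Your proposal is correct and follows the paper's proof essentially step for step: differentiate the total density to get $\sigma^{(1)}=-2A\,z^{(2)}\cdot\partial_\alpha z^{\circ\perp}+r_\sigma$, differentiate \eqref{eq:potential-boundary-condition} and form $\sum_a a\,\partial_\alpha G_a^{(1)}$ using exactly the coefficient sums you computed, eliminate $z^{(2)}\cdot\partial_\alpha z^{\circ\perp}$ to obtain \eqref{eq:second-order-curve} and \eqref{eq:second-order-density}, and check consistency in reverse. Two cosmetic remarks. In Step~1, $\partial_\alpha z^{(2)}$ is absent simply because it enters $\partial_\alpha z$ at order $t^2$; normality of $z^{(2)}$ plays no role there. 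In Step~2, the normal-derivative terms you flag are already accounted for in Lemma~\ref{lem:normal-velocity-traces}, which differentiates the whole product $\overline U_a\cdot\partial_\alpha z_a^\perp$, so they must not be added again.
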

Note that the subscripts in $r_\sigma$ and $r_G$ indicate the equations from which the
remainders arise: $r_\sigma$ collects the terms in the first temporal derivative of the
total density \eqref{eq:sigma} that are already known at first order (see \eqref{eq:total-density-derivative} below), while $r_G$ collects the corresponding terms in $\partial_t(\partial_\alpha G_+-\partial_\alpha G_-)(0)$. Neither contains the unknown
second-order coefficients $\sigma^{(1)}$ and $z^{(2)}$.

\begin{proof}
Differentiating the total density \eqref{eq:sigma} at $t=0$
gives the compatibility relation
\begin{equation}\label{eq:total-density-derivative}
    \sigma^{(1)}
    =-2A z^{(2)}\cdot\partial_\alpha z^{\circ\perp}
    +r_\sigma.
\end{equation}
Next, differentiating \eqref{eq:potential-boundary-condition},
summing over $a=\pm1$, and applying
Lemma~\ref{lem:normal-velocity-traces} yields
\[
\begin{aligned}
    \sum_{a=\pm1}a\partial_\alpha G_a^{(1)}
    =2z^{(2)}\cdot\partial_\alpha z^{\circ\perp}
    +\frac{2+A^2}{2(1-A^2)}P[\sigma^{(1)}]
    -\frac{3A}{2(1-A^2)}\sigma^{(1)}
    +r_G.
\end{aligned}
\]
Using \eqref{eq:total-density-derivative}, this becomes
\[
\begin{aligned}
    \sum_{a=\pm1}a\partial_\alpha G_a^{(1)}
    =
    \frac{2+A^2}{1-A^2}
    \left(
        z^{(2)}\cdot\partial_\alpha z^{\circ\perp}
        +\frac12P[\sigma^{(1)}]
    \right)
    +r_G-\frac{3A}{2(1-A^2)}r_\sigma.
\end{aligned}
\]
The right-hand side vanishes under \eqref{eq:second-order-curve}.
Substituting \eqref{eq:second-order-curve} into
\eqref{eq:total-density-derivative} gives precisely
\eqref{eq:second-order-density}, so both compatibility conditions hold.
The formula for $\sigma_a^{(1)}$ follows by differentiating
\eqref{eq:boundary-densities} at $t=0$.
\end{proof}

The first- and second-order equations involve the same operator $I-AP$.
Their solvability, the regularity of the resulting coefficients,
and the temporal remainder estimates are established in
Section~\ref{section6}.

The regularity estimates follow the chain
\[
    R_a\in H^{k-2}
    \quad\Longrightarrow\quad
    r_a,\ r_\sigma,\ r_G\in H^{k-2}
    \quad\Longrightarrow\quad
    \sigma^{(1)},\ z^{(2)}\in H^{k-2}.
\]
Indeed, once the first-order quantities have been controlled,
the bounds for $R_a$ follow from the boundedness of $Q_0$
and Sobolev product estimates. The same product estimates
control the scalar remainders, while the invertibility of $I-AP$
and the prescription \eqref{eq:second-order-curve}, which fixes
$z^{(2)}$ as a normal vector field, yield the regularity of
$\sigma^{(1)}$ and $z^{(2)}$, respectively.

\section{Inverting the Saffman–Taylor equation}\label{section6}
The equations in Section~\ref{section5} determine the temporal
coefficients by inverting twice with the same operator $I-AP$.
In this section we carry out these inversions and establish the
regularity of the resulting densities and velocity coefficients
for the prescribed opening speed $c$ satisfying \eqref{eq:c-assumptions}.
The analytic proof of invertibility is deferred to
Section~\ref{section8}.

As in Theorem~\ref{thm:main}, let $z^\circ$ be a chord--arc curve with
$z^\circ-(\alpha,0)\in H^k(\mathbb R;\mathbb R^2)$, $k\ge5$,
and fix $A\in(-1,1)$.
We retain the assumptions on $c$ from Section~\ref{section5}
and the operators $Q_0$ and $P$ introduced in \eqref{Q-P}.
We keep these regularity assumptions throughout
Sections~\ref{section6}--\ref{section8}.
The operator result needed here is the following.

\begin{proposition}
\label{p:inverse}
For $|A|<1$ and every $0\le j\le k-1$, the operators $Q_0$ and $P$
are bounded on $H^j$, and $I-AP$ is invertible on
$H^j(\mathbb R;\mathbb R)$, with bounded inverse.
Moreover,
\[
    Q_0[1],\ P[1]\in H^{k-1}.
\]
\end{proposition}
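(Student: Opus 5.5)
We will prove the three claims in order: boundedness of $Q_0$ and $P$ on $H^j$, the fact that $Q_0[1],P[1]\in H^{k-1}$, and invertibility of $I-AP$.

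\textbf{Boundedness.} We split the kernel of $Q_0$ as
\[
\frac{1}{z^\circ(\alpha)-z^\circ(\beta)}
=\frac{1}{\partial_\alpha z^\circ(\alpha)(\alpha-\beta)}+K(\alpha,\beta),
\]
where
\[
K(\alpha,\beta)=\frac{\partial_\alpha z^\circ(\alpha)(\alpha-\beta)-(z^\circ(\alpha)-z^\circ(\beta))}{\partial_\alpha z^\circ(\alpha)(\alpha-\beta)(z^\circ(\alpha)-z^\circ(\beta))}.
\]
The first piece is a multiple of the Hilbert transform multiplied by $1/\partial_\alpha z^\circ$. Since $z^\circ$ is chord--arc, $|\partial_\alpha z^\circ|\ge d>0$, so this piece is bounded on $H^j$ provided $1/\partial_\alpha z^\circ-1\in H^{k-1}$, which holds because $k-1\ge4$.

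The remainder $K$ is smooth. The numerator is $O(|\alpha-\beta|^2\,\|\partial_\alpha^2 z^\circ\|_\infty)$, and the chord--arc bound controls the denominator. For $L^2$ boundedness we will write $K$ through the difference-quotient representation $\int_0^1\!\int_0^1$ of $\partial_\alpha^2 z^\circ$ and apply Minkowski's inequality, possibly together with the Coifman--McIntosh--Meyer theorem for Cauchy integrals on Lipschitz curves. For $H^j$ we commute $\partial_\alpha^j$ through the integral using the standard trick of rewriting derivatives as $(\partial_\alpha+\partial_\beta)$ acting on the kernel. This produces terms in which at most $j+1\le k$ derivatives fall on $z^\circ$. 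Those terms are controlled by $\|z^\circ-(\alpha,0)\|_{H^k}$ via Sobolev product estimates, since $H^{k-1}$ is an algebra and embeds in $C^{k-2}$.

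The bound for $P=2Q_0\cdot\partial_\alpha z^{\circ\perp}$ then follows by multiplication with $\partial_\alpha z^{\circ\perp}$, which lies in $H^{k-1}$ up to the constant $i$.

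\textbf{$Q_0[1],P[1]\in H^{k-1}$.} For the flat curve the principal value of $\int d\beta/(\alpha-\beta)$ vanishes. Hence $Q_0^*[1]$ equals the operator with kernel $K$ applied to $1$. We will rewrite it as $\frac1{2\pi}\mathrm{pv}\int\bigl(\frac1{z^\circ(\alpha)-z^\circ(\beta)}-\frac1{\alpha-\beta}\bigr)d\beta$. This expression is integrable at infinity because $z^\circ-\alpha\in L^2$, and it is linear to leading order in $z^\circ-(\alpha,0)$, so its $H^{k-1}$ norm is finite. Equivalently, $Q_0^*[1]=\frac1{2\pi}\mathrm{pv}\int \partial_\beta\log(\cdot)$-type terms reduce to Hilbert transforms of $\partial_\alpha z^\circ-1\in H^{k-1}$ plus smoother remainders. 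The claim for $P[1]$ follows by the same multiplication as above.

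\textbf{Invertibility of $I-AP$.} Since $P$ is bounded, it suffices to show that $I-AP$ is Fredholm of index zero and injective. Alternatively, we will follow \cite{CCG11} and prove a direct a priori estimate $\|f\|_{H^j}\le C\|(I-AP)f\|_{H^j}$ together with a continuity argument in $\xi\in[0,A]$ for $I-\xi P$. The $L^2$ estimate is the main obstacle. We will approach it through the potential interpretation: $f$ is a single-layer density whose field $\nabla p$, given by \eqref{Muskat:uDp}, is harmonic off $z^\circ$, and $Pf$ is essentially twice the average normal trace. A Rellich/energy identity compares the normal traces on the two sides. The relation $(I-AP)f=g$ means $\tfrac{1}{2}(1\mp A)$ combinations of the left and right normal traces are prescribed. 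Integrating $|\nabla p|^2$ over the two regions and using $|A|<1$ then gives $\|\nabla p\|_{L^2(\mathbb R^2)}$, and hence the $\dot H^{-1/2}$ norm of $f$, in terms of $g$. Upgrading this to $L^2$ by the Rellich identity, as in Verchota's method on Lipschitz graphs, gives $\|f\|_{L^2}\le C\|g\|_{L^2}$, and the same identity shows injectivity.

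\textbf{Higher regularity.} We differentiate the equation and write $\partial_\alpha^j(Pf)=P\partial_\alpha^j f+$ lower-order commutator terms. These commutators are bounded from $H^{j-1}$ using $z^\circ-(\alpha,0)\in H^k$ and $j\le k-1$. Induction on $j$ then yields the bounded inverse on every $H^j$.
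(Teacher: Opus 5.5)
There is a genuine gap in the invertibility of $I-AP$ on $L^2$, which is the core of the statement. You reduce to showing that $I-AP$ is Fredholm of index zero and injective, but you never complete that reduction: boundedness of $P$ gives no Fredholm property, and you never show that $P$ is compact.

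Your alternative, an a priori bound $\|f\|_{L^2}\le C\|(I-\xi P)f\|_{L^2}$, does not close as sketched.
\begin{itemize}
\item The weighted energy identity sums the Dirichlet energies of the single-layer pressure $p$ over the two regions, with weights $1\mp A$, and equates this, up to sign, to the pairing $\int_{\mathbb R}p\,g\,d\alpha$. It presupposes $\nabla p\in L^2(\mathbb R^2)$, roughly $f\in\dot H^{-1/2}$, which fails for general $L^2$ densities in two dimensions.
\item Even when the identity holds, it only bounds $\|f\|_{\dot H^{-1/2}}$ by $\|g\|_{\dot H^{-1/2}}$, and the latter is not controlled by $\|g\|_{L^2}$.
\item The Rellich upgrade, as in Verchota's method on Lipschitz graphs, needs a \emph{constant} vector field transversal to the curve, i.e.\ a graph. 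Here $z^\circ$ is only chord--arc and may overturn. With a variable transversal field you pick up interior terms bounded by local Dirichlet energy. Closing the estimate then requires exactly a compactness-plus-uniqueness argument, which your plan does not contain.
\end{itemize}

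The missing idea is that $P$ itself is compact. The flat part $1/(\alpha-\beta)$ of $\partial_\alpha z^\circ(\alpha)/(z^\circ(\alpha)-z^\circ(\beta))$ is real. Hence the real kernel of $P$ is $-\pi^{-1}\operatorname{Im}(\partial_\alpha q/q)$, with $q(\alpha,\beta)=(z^\circ(\alpha)-z^\circ(\beta))/(\alpha-\beta)$. Near the diagonal this kernel is bounded by segment averages of $|\partial_\alpha^2z^\circ|$. For $|\alpha-\beta|\ge1$ it is bounded by $|\partial_\alpha z^\circ(\alpha)-1|/|\alpha-\beta|$ plus $O(|\alpha-\beta|^{-2})$. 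So it lies in $L^2(\mathbb R^2)$, and $P$ is Hilbert--Schmidt. Then $I-AP$ is Fredholm of index zero, and only injectivity is needed.

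The paper proves injectivity for the adjoint. Suppose $P^\dagger h=\mu h$ with $\mu=1/A$. Then $h\in H^{k-1}$, and the potential $\Phi=\operatorname{Re}B[h\,\partial_\alpha z^\circ]$ is bounded, so a cutoff argument gives finite energy. $\Phi$ has traces $\frac{\mu-1}{2}h$ and $\frac{\mu+1}{2}h$ on the two sides and a continuous normal derivative. Since $|\mu|>1$, the two Green identities carry coefficients of opposite sign, so both energies vanish and $h=0$. Your single-layer identity would also give injectivity once you justify finite energy. Higher regularity then follows from $f=APf+g$ and the one-derivative gain of $P$.

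Two smaller points:
\begin{itemize}
\item The tail of your kernel $K$ decays only like $|1-1/\partial_\alpha z^\circ(\alpha)|/|\alpha-\beta|$. A Minkowski bound from the difference-quotient representation therefore does not give $L^2$ boundedness, whereas square-integrability of the kernel does.
\item For $Q_0[1]$, the clean argument is $B[\partial_\alpha z^\circ](\alpha)=0$, the principal-value Cauchy integral of the constant $1$. This gives $B[1]=B[1-\partial_\alpha z^\circ]\in H^{k-1}$ by boundedness of $B$, and it replaces your ``linear to leading order'' heuristic.
\end{itemize}
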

The proof is given in Section~\ref{section8.3}. The last assertion is needed because $\sigma^{(0)}$ does not decay at infinity, it allows
us to work with $\sigma^{(0)}+2\in H^{k-1}$, see the proof of Proposition~\ref{p:first}.
\begin{lemma}
\label{l:products}
Let $m\ge1$ be an integer and
$f-f_\infty\in H^m(\mathbb R)$, $f_\infty\in\mathbb C$.
Multiplication by $h\in H^j$ is bounded on $H^j$, $0\le j\le m$, with
\[
    \|fh\|_{H^j}
    \le C\bigl(|f_\infty|+\|f-f_\infty\|_{H^m}\bigr)
    \|h\|_{H^j}.
\]
Moreover, if $g-g_\infty\in H^m$, $g_\infty\in\mathbb C$, then
$fg-f_\infty g_\infty\in H^m$.

For a smooth map $F$ defined near the closure of the range of $f$,
including $f_\infty$, one has
$F(f)-F(f_\infty)\in H^m$.
In particular, if $f-1\in H^m$ and $\inf|f|>0$,
\[
    f^{-1}-1,\quad |f|^{-1}-1,\quad |f|^{-2}-1,\quad
    f/|f|-1\in H^m.
\]
These operations preserve $C^r$ time dependence.
Their bounds are uniform on bounded families, with uniform bounds
on the derivatives of $F$ near their ranges and a common positive
lower bound for denominators.
\end{lemma}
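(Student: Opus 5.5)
The statement to prove is Lemma~\ref{l:products}: product and composition estimates for functions with constant far-field values. The plan is to reduce everything to the standard algebra property of $H^m(\mathbb R)$ for $m\ge1$, namely
\[
\|uv\|_{H^m}\le C\|u\|_{H^m}\|v\|_{H^m},
\]
together with the embedding $H^m\subset L^\infty$. Throughout I write $f=f_\infty+\tilde f$ with $\tilde f\in H^m$.

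\textbf{Products.} For the first estimate, expand $fh=f_\infty h+\tilde f h$.
\begin{itemize}
\item The term $f_\infty h$ is trivially bounded by $|f_\infty|\|h\|_{H^j}$.
\item For $\tilde f h$ with $0\le j\le m$, apply Leibniz: $\partial^l(\tilde f h)=\sum\binom{l}{i}\partial^i\tilde f\,\partial^{l-i}h$ for $l\le j$.
\item If $i\le m-1$, then $\partial^i\tilde f\in H^1\subset L^\infty$, which bounds that term by $\|\tilde f\|_{H^m}\|h\|_{H^j}$.
\item The only other case is $i=m=j=l$, giving $\partial^m\tilde f\cdot h$ with $h\in H^m\subset L^\infty$. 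This is bounded by $\|\partial^m\tilde f\|_{L^2}\|h\|_{L^\infty}$.
\end{itemize}
The second assertion follows from the identity
\[
fg-f_\infty g_\infty=f_\infty\tilde g+g_\infty\tilde f+\tilde f\tilde g,
\]
each term of which lies in $H^m$ by the algebra property.

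\textbf{Composition.} For $F(f)-F(f_\infty)$, write
\[
F(f)-F(f_\infty)=\tilde f\int_0^1F'(f_\infty+s\tilde f)\,ds .
\]
This makes the $L^2$ part clear, since the integral factor is bounded on the relevant range. Taking one derivative gives $F'(f)\partial\tilde f$. Higher derivatives follow from the Fa\`a di Bruno formula: $\partial^l F(f)$ is a sum of $F^{(r)}(f)\prod_{q=1}^r\partial^{i_q}\tilde f$ with $\sum i_q=l\le m$ and each $i_q\ge1$.
\begin{itemize}
\item The factor $F^{(r)}(f)$ is bounded, because $f$ takes values in a compact subset of the domain of $F$. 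Here the closure of the range is compact: $\tilde f$ is continuous and vanishes at infinity, so the closure of the range of $f$ is the compact set $\{f(x)\}\cup\{f_\infty\}$.
\item In each product, at most one index can satisfy $i_q>m-1$, and only if $r=1$. All remaining factors satisfy $i_q\le m-1$, so they lie in $L^\infty$.
\item Exactly one factor is placed in $L^2$; all others are placed in $L^\infty$, using $H^{m-i_q}\subset L^\infty$ when $i_q\le m-1$.
\end{itemize}
This yields $F(f)-F(f_\infty)\in H^m$, with bounds depending on $\|\tilde f\|_{H^m}$ and on $\sup|F^{(r)}|$, $r\le m$, over a neighbourhood of the range.

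\textbf{Special cases and uniformity.} The listed functions are obtained by choosing $F$ as $w\mapsto w^{-1}$, $|w|^{-1}$, $|w|^{-2}$ and $w/|w|$ with $f_\infty=1$. Each of these is smooth as a real map on $\mathbb C\setminus\{0\}\cong\mathbb R^2\setminus\{0\}$. The hypothesis $\inf|f|>0$ keeps the closure of the range away from $0$, and $F(1)=1$. The bounds are uniform on bounded families by inspection of the constants above, which depend only on $\|\tilde f\|_{H^m}$, $|f_\infty|$, and derivative bounds for $F$ near the range; the latter require a common lower bound on the denominators.

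\textbf{Time dependence.} For $C^r$ time dependence, differentiate in $t$ and apply the same estimates. For products this is bilinear continuity. For compositions, $\partial_tF(f)=F'(f)\partial_t\tilde f$ is handled by the product estimate applied to $F'(f)-F'(f_\infty)\in H^m$, and then by induction on $r$. Continuity in time follows from the Lipschitz dependence obtained by the mean value representation above.

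The main, though mild, obstacle is the bookkeeping in the endpoint case $j=m$ of the Fa\`a di Bruno estimate. There one must check that at most one factor carries all $m$ derivatives, so that $L^2\times L^\infty$ suffices. This is exactly where $m\ge1$ is needed.
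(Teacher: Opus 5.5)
Your proposal is correct and follows essentially the same route as the paper. For products you use Leibniz's rule with $H^1\subset L^\infty$, treating each integer $j$ directly where the paper appeals to interpolation. For compositions you use the chain rule with one derivative factor in $L^2$, the others in $L^\infty$, and the top-order term $F'(f)\partial_\alpha^m f$, and then apply the same rules to time derivatives.

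The only slip is the zeroth-order representation $F(f)-F(f_\infty)=\tilde f\int_0^1F'(f_\infty+s\tilde f)\,ds$. It needs the segment from $f_\infty$ to $f(x)$ to lie in the domain of $F$, and this fails, for example, for $F(w)=w^{-1}$ and $f=e^{i\varphi}$ with $\varphi\in H^m$ attaining the value $\pi$. The fix is to use the formula only where $|\tilde f|<\delta$, with the closed $\delta$-disk about $f_\infty$ inside the domain. Elsewhere use $|F(f)-F(f_\infty)|\le 2\max_{\overline{f(\mathbb R)}}|F|\,|\tilde f|/\delta$. Together these give the pointwise bound $|F(f)-F(f_\infty)|\le C|\tilde f|$ that the paper relies on.
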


\begin{proof}
The embedding $H^1(\mathbb R)\hookrightarrow L^\infty$ and Leibniz's
rule prove the product estimate at integer orders: derivatives of $f$
of order at most $m-1$ are bounded, while the possible term
$(\partial_\alpha^m f)h$ is controlled by
$\|\partial_\alpha^m f\|_2\|h\|_\infty$.
Interpolation gives intermediate orders.
Expanding
\[
    fg-f_\infty g_\infty
    =f(g-g_\infty)+g_\infty(f-f_\infty)
\]
gives the second assertion.

For compositions, smoothness controls the zeroth-order difference
by $|f-f_\infty|$; the chain rule expresses each derivative as products
of bounded coefficients and derivatives of $f$.
Put one derivative factor in $L^2$ and the others in $L^\infty$,
treating the top-order term as $F'(f)\partial_\alpha^m f$.
The reciprocal assertions follow by choosing the indicated smooth
maps away from zero.
Applying the same product and chain rules to time derivatives
proves the last assertion.
\end{proof}

\subsection{First-order coefficients}

We first solve the initial Saffman--Taylor equation and check the
regularity of the choices made in
Lemma~\ref{lem:first-order-compatibility}.

\begin{proposition}
\label{p:first}
Under the standing assumptions,
equation \eqref{eq:first-order-density} in
Lemma~\ref{lem:first-order-compatibility} has a unique solution
with $\sigma^{(0)}+2\in H^{k-1}$.
The coefficient $z^{(1)}$ defined by \eqref{eq:first-order-curve}
belongs to $H^{k-1}$.
Moreover, the first-order coefficients satisfy
\[
    z_a^{(1)}-\frac{aic_\infty}{1-aA}\in H^{k-1},
    \qquad
    \sigma_a^{(0)}+1-aA(1-c_\infty)\in H^{k-1}.
\]
\end{proposition}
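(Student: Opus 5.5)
The plan is to reduce \eqref{eq:first-order-density} to an equation in $H^{k-1}$ by subtracting the far-field value of $\sigma^{(0)}$. For the flat interface, $\sigma^{(0)}=-2$. So I write $\sigma^{(0)}=-2+s$ with $s$ as the new unknown. Since $P$ is linear, \eqref{eq:first-order-density} becomes
\[
(I-AP)s=-2\,\partial_\alpha z^\circ_1+2-2AP[1]=-2(\partial_\alpha z^\circ_1-1)-2AP[1].
\]
The right-hand side lies in $H^{k-1}$: $z^\circ-(\alpha,0)\in H^k$ gives $\partial_\alpha z^\circ_1-1\in H^{k-1}$, and Proposition~\ref{p:inverse} gives $P[1]\in H^{k-1}$. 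Invertibility of $I-AP$ on $H^{k-1}$ then produces a unique $s\in H^{k-1}$, hence the unique solution with $\sigma^{(0)}+2\in H^{k-1}$. Uniqueness within this class holds because the difference of two such solutions lies in $H^{k-1}$ and is annihilated by $I-AP$. I expect this to be the only genuinely analytic step, and it is entirely supplied by Proposition~\ref{p:inverse}.

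Next, $z^{(1)}=-Q_0[\sigma^{(0)}]=2Q_0[1]-Q_0[s]$. Both terms are in $H^{k-1}$, again by Proposition~\ref{p:inverse} ($Q_0[1]\in H^{k-1}$ and $Q_0$ bounded on $H^{k-1}$).

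For the boundary coefficients, $z_a^{(1)}=z^{(1)}+\frac{a}{1-aA}c\tau^\perp$, and I decompose
\[
c\tau^\perp-ic_\infty=(c-c_\infty)\tau^\perp+c_\infty i(\tau-1).
\]
Lemma~\ref{l:products} applies with $f=\partial_\alpha z^\circ$, $f_\infty=1$, $m=k-1$, using $\inf|\partial_\alpha z^\circ|>0$ from the chord--arc condition. It gives $\tau-1=f/|f|-1\in H^{k-1}$, and the product rule gives $(c-c_\infty)\tau^\perp\in H^{k-1}$. This yields $z_a^{(1)}-\frac{aic_\infty}{1-aA}\in H^{k-1}$.

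Finally, Lemma~\ref{lem:first-order-compatibility} gives $\sigma_a^{(0)}=\frac{1-aA}{2}\sigma^{(0)}-aAc|\partial_\alpha z^\circ|$. Its far-field constant is $-(1-aA)-aAc_\infty=-1+aA(1-c_\infty)$. Then
\[
\sigma_a^{(0)}+1-aA(1-c_\infty)=\tfrac{1-aA}{2}s-aA\bigl(c|\partial_\alpha z^\circ|-c_\infty\bigr),
\]
and $c|\partial_\alpha z^\circ|-c_\infty\in H^{k-1}$ by the second assertion of Lemma~\ref{l:products}, since $|\partial_\alpha z^\circ|-1\in H^{k-1}$ and $c-c_\infty\in H^{k-1}$. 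The main point needing care is this bookkeeping of the far-field constants: once $\sigma^{(0)}$ is shifted by $-2$, everything else is product estimates.
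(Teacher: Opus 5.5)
Your proposal is correct and follows the paper's proof: you shift by the far-field value to get $(I-AP)(\sigma^{(0)}+2)=-2(\partial_\alpha z^\circ_1-1)-2AP[1]$ and invert with Proposition~\ref{p:inverse}, then write $z^{(1)}=2Q_0[1]-Q_0[\sigma^{(0)}+2]$. Finally you use Lemma~\ref{l:products} to get $\tau-1,\ c\tau-c_\infty\in H^{k-1}$ before reading off the boundary coefficients from \eqref{eq:first-order-boundary-compatibility}, exactly as in the paper.
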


\begin{proof}
Rewrite \eqref{eq:first-order-density} as
\[
    (I-AP)(\sigma^{(0)}+2)
    =-2(\partial_\alpha z_1^\circ-1)-2AP[1].
\]
Since $z^\circ-(\alpha,0)\in H^k$, the right-hand side belongs to $H^{k-1}$ by Proposition~\ref{p:inverse}, and the same proposition gives
$\sigma^{(0)}+2\in H^{k-1}$.
Likewise,
\[
    z^{(1)}
    =-Q_0[\sigma^{(0)}+2]+2Q_0[1]\in H^{k-1}.
\]
By the assumptions on $c$ and Lemma~\ref{l:products},
\[
    \tau-1\in H^{k-1},
    \qquad
    c\tau-c_\infty\in H^{k-1}.
\]
The remaining assertions follow from
\eqref{eq:first-order-boundary-compatibility} and
$z_a^{(1)}=z^{(1)}+ac\tau^\perp/(1-aA)$.
\end{proof}

\subsection{Second-order coefficients}

Once the first-order coefficients are fixed, all the remainders
in the second-order equations are known.
We first check their regularity and then solve
\eqref{eq:second-order-density} with the same operator $I-AP$
as in Proposition~\ref{p:first}, now in $H^{k-2}$.

\begin{proposition}
\label{p:second}
Under the standing assumptions and those of
Proposition~\ref{p:first},
equation \eqref{eq:second-order-density} in
Lemma~\ref{lem:second-order-compatibility} has a unique solution
$\sigma^{(1)}\in H^{k-2}$.
The normal coefficient $z^{(2)}$ prescribed by
\eqref{eq:second-order-curve} and the boundary density derivatives
$\sigma_a^{(1)}$ in \eqref{eq:boundary-density-derivatives}
also belong to $H^{k-2}$.
\end{proposition}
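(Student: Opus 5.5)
The plan follows the regularity chain stated at the end of Section~\ref{section5}. First we show $R_a\in H^{k-2}$, then that $r_a,r_\sigma,r_G\in H^{k-2}$, and finally we invert $I-AP$ on $H^{k-2}$ using Proposition~\ref{p:inverse}. Throughout, the first-order inputs come from Proposition~\ref{p:first}: $\sigma^{(0)}+2\in H^{k-1}$, $z^{(1)}\in H^{k-1}$, $z_a^{(1)}-\frac{aic_\infty}{1-aA}\in H^{k-1}$, $\sigma_a^{(0)}+1-aA(1-c_\infty)\in H^{k-1}$. We also use $\tau-1,\ c\tau-c_\infty\in H^{k-1}$ and $\partial_\alpha z^\circ-1\in H^{k-1}$, with $\inf|\partial_\alpha z^\circ|>0$ by the chord--arc condition. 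All products, quotients and compositions are handled by Lemma~\ref{l:products} with $m=k-1$.

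For $R_a$ in \eqref{eq:pressure-trace-remainder}, every argument of $Q_0^*$ has the form $\partial_\alpha(F/\partial_\alpha z^\circ)$. Here $F$ is one of $\sigma^{(0)}$, $z_b^{(1)}\sigma_b^{(0)}$ or $c\tau^\perp\sigma_{-a}^{(0)}$, and each of these equals a constant plus an $H^{k-1}$ function. By Lemma~\ref{l:products}, $F/\partial_\alpha z^\circ$ is again a constant plus an $H^{k-1}$ function. The derivative kills the constant, so each argument lies in $H^{k-2}$. Proposition~\ref{p:inverse} then gives boundedness of $Q_0$ on $H^{k-2}$. The remaining factors multiplying these terms are $z_a^{(1)}$ and $1/\partial_\alpha z^\circ$, both constants plus $H^{k-1}$ functions, so the product estimate keeps everything in $H^{k-2}$. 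Hence $R_a\in H^{k-2}$.

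For $r_a$ in \eqref{eq:normal-velocity-remainder}, the term $R_a\cdot\partial_\alpha z^{\circ\perp}$ is in $H^{k-2}$ by the product lemma. In the second term, $Q_0[\sigma^{(0)}]+ai$ equals $-z^{(1)}+ai$, a constant plus an $H^{k-1}$ function. It multiplies $\partial_\alpha z_a^{(1)\perp}$, which lies in $H^{k-2}$ because the constant part of $z_a^{(1)}$ is differentiated away. The same mechanism handles $r_\sigma$ in \eqref{eq:density-remainder}, where $\partial_\alpha z^{(1)}$ and $\partial_\alpha(c\tau^\perp)$ lie in $H^{k-2}$. It also handles $r_G$ in \eqref{eq:potential-remainder}, where every summand carries a factor $\partial_\alpha z_a^{(1)\perp}$ or $r_a$. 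The real-part pairings $v\cdot w=\operatorname{Re}(vw^*)$ are harmless.

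With the right-hand side of \eqref{eq:second-order-density} in $H^{k-2}$, Proposition~\ref{p:inverse} with $j=k-2\le k-1$ yields a unique $\sigma^{(1)}\in H^{k-2}$. Then $P[\sigma^{(1)}]\in H^{k-2}$, so the scalar on the right of \eqref{eq:second-order-curve} is in $H^{k-2}$. We define $z^{(2)}$ as that scalar times $\partial_\alpha z^{\circ\perp}/|\partial_\alpha z^\circ|^2$, a bounded multiplier of the form constant plus $H^{k-1}$; hence $z^{(2)}\in H^{k-2}$. Finally, in \eqref{eq:boundary-density-derivatives} the first term is in $H^{k-2}$ because it is a bounded multiplier times $z^{(2)}\cdot\partial_\alpha z^{\circ\perp}$. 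The second term is a constant-plus-$H^{k-1}$ factor times $\partial_\alpha z_a^{(1)\perp}\in H^{k-2}$. So $\sigma_a^{(1)}\in H^{k-2}$.

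The main obstacle is the non-decaying constants in $\sigma^{(0)}$, $\sigma_b^{(0)}$ and $z_b^{(1)}$. Applying $Q_0$ directly to a non-decaying function would need Proposition~\ref{p:inverse}'s statement on $Q_0[1]$. Here, however, every such quantity sits inside $\partial_\alpha(\cdot)$ or is paired with a differentiated factor. The one thing to verify carefully is that no term in $R_a$ or $r_G$ contains an undifferentiated non-decaying product fed into $Q_0$ or left bare. If such a term did appear, I would split it as a constant plus an $H^{k-1}$ function and invoke $Q_0[1],P[1]\in H^{k-1}$, exactly as in the proof of Proposition~\ref{p:first}.
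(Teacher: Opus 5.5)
Your proposal is correct and follows the paper's proof essentially step for step. Like the paper, you run the chain $R_a\Rightarrow r_a,r_\sigma,r_G\Rightarrow\sigma^{(1)},z^{(2)},\sigma_a^{(1)}\in H^{k-2}$, using Lemma~\ref{l:products} with $m=k-1$ and the invertibility of $I-AP$ from Proposition~\ref{p:inverse} at $j=k-2$. The contingency in your last paragraph never arises: the only undifferentiated $Q_0$-term is $Q_0[\sigma^{(0)}]=-z^{(1)}$, which Proposition~\ref{p:first} already places in $H^{k-1}$, as you observed.
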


\begin{proof}
By Proposition~\ref{p:first} and Lemma~\ref{l:products},
the quotients
\[
    \frac{\sigma^{(0)}}{\partial_\alpha z^\circ},
    \qquad
    \frac{z_b^{(1)}\sigma_b^{(0)}}{\partial_\alpha z^\circ},
    \qquad
    \frac{c\tau^\perp\sigma_{-a}^{(0)}}{\partial_\alpha z^\circ}
\]
differ from constants by $H^{k-1}$ functions.
Their derivatives belong to $H^{k-2}$.
The formula for $R_a$ in
Lemma~\ref{lem:normal-velocity-traces} involves only these
derivatives, $Q_0^*$, and multiplication by functions that are
constant plus $H^{k-1}$.
Proposition~\ref{p:inverse} and Lemma~\ref{l:products} therefore give
$R_a\in H^{k-2}$.
The definitions \eqref{eq:normal-velocity-remainder}, \eqref{eq:density-remainder} and \eqref{eq:potential-remainder} of $r_a$, $r_\sigma$ and $r_G$ involve only $R_a$, first spatial derivatives of the first-order coefficients, which belong to $H^{k-2}$, and
multiplication by functions that are constant plus $H^{k-1}$. Lemma~\ref{l:products} then gives $r_a,r_\sigma,r_G\in H^{k-2}$.

Proposition~\ref{p:inverse} therefore solves
\eqref{eq:second-order-density} in $H^{k-2}$.
The right-hand side of \eqref{eq:second-order-curve}
belongs to $H^{k-2}$.
Choosing
\[
    z^{(2)}
    =
    \frac{z^{(2)}\cdot(\partial_\alpha z^\circ)^\perp}
    {|\partial_\alpha z^\circ|^2}
    (\partial_\alpha z^\circ)^\perp
\]
and applying Lemma~\ref{l:products} gives
$z^{(2)}\in H^{k-2}$.
The formula \eqref{eq:boundary-density-derivatives} for $\sigma_a^{(1)}$ in Lemma~\ref{lem:second-order-compatibility} consists of products
of these quantities and first spatial derivatives of the
first-order coefficients, so Lemma~\ref{l:products} gives
$\sigma_a^{(1)}\in H^{k-2}$.
\end{proof}

\section{Proof of Theorem~\ref{thm:main}}\label{section7}
In this section, we complete the proof of
Theorem~\ref{thm:main}. We first estimate the gradient of
the potential $g$ and establish the nondegeneracy of
$A\bar u+i$. These estimates allow us to choose the opening
rate so that the hull condition holds and apply the
$h$-principle. Finally, we extend the construction to
several layers to obtain larger opening rates.

Let the coefficients be those constructed in Section~\ref{section6},
under the hypotheses of Theorem~\ref{thm:main}, with $c$ still free
as in Section~\ref{section5}, and set
\begin{equation}\label{e:curves}
    z(t)=z^\circ+tz^{(1)}+\tfrac12t^2z^{(2)},
    \qquad
    z_a(t)=z(t)+\frac{a}{1-aA}tc\tau^\perp.
\end{equation}
The pressure densities \eqref{eq:boundary-densities} are
\[
    \sigma_a
    =-(1-aA)(A\partial_t z_a+i)\cdot
    (\partial_\alpha z_a)^\perp,
    \qquad
    \sigma=\sigma_++\sigma_-.
\]
We want to bound the derivatives
\begin{subequations}\label{e:Gderivs}
\begin{align}
    \partial_\alpha G(t,\alpha,\lambda)
    &=
    \sum_{a=\pm1}\frac{1+a\lambda}{2}
    \partial_\alpha G_a(t,\alpha),
    \label{e:Galpha}\\
    \partial_\lambda G(t,\alpha,\lambda)
    &=
    \frac12\int_0^\alpha
    (\partial_\alpha G_+-\partial_\alpha G_-)(t,\beta)
    \,d\beta,
    \label{e:Glambda}
\end{align}
\end{subequations}
establish a positive lower bound
for $|Au+i|$, and verify the subsolution properties.
All time derivatives at zero are right derivatives.
Throughout this section, $O(t^j)$ denotes a remainder bounded by
$Ct^j$ uniformly in the spatial and transverse variables.

\subsection{Estimate for
\texorpdfstring{$\partial_\alpha G$}{the tangential derivative of G}}
\label{section7.1}

\begin{proposition}
\label{p:tangent}
For sufficiently small positive times,
\begin{equation}\label{e:tangent}
    \partial_\alpha G
    =c|\partial_\alpha z^\circ|+\frac14\sigma^{(0)}+O(t).
\end{equation}
\end{proposition}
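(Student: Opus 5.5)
By \eqref{e:Galpha}, $\partial_\alpha G$ is a convex combination in $\lambda\in[-1,1]$ of the two boundary derivatives $\partial_\alpha G_\pm$. It therefore suffices to prove, for each $a=\pm1$,
\[
\partial_\alpha G_a(t,\alpha)=\partial_\alpha G_a^{(0)}(\alpha)+O(t)
\]
uniformly in $\alpha$. The claim then follows from Lemma~\ref{lem:first-order-compatibility}, which identifies $\partial_\alpha G_a^{(0)}=c|\partial_\alpha z^\circ|+\frac14\sigma^{(0)}$ for both signs. The convex weights $(1+a\lambda)/2$ are bounded by one, so uniformity in $\lambda$ is automatic.

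To obtain the $O(t)$ bound I will apply the mean value theorem in time. This gives
\[
|\partial_\alpha G_a(t)-\partial_\alpha G_a^{(0)}|\le t\sup_{0\le s\le t}\|\partial_t\partial_\alpha G_a(s)\|_{L^\infty},
\]
so the task is a uniform $L^\infty$ bound on $\partial_t\partial_\alpha G_a$ for $s\in[0,T]$. Write $\partial_\alpha G_a$ through \eqref{eq:potential-boundary-condition}, substituting the normal trace from \eqref{eq:normal-velocity-trace}. Then $\partial_\alpha G_a$ is an explicit polynomial expression in $\partial_t z_a$, $\partial_\alpha z_a$, $\sigma_a$ and the pressure trace $Q_a\cdot\partial_\alpha z_a^\perp$.

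The geometric pieces are harmless. By \eqref{e:curves} and Propositions~\ref{p:first}--\ref{p:second}, $z_a$ is a quadratic polynomial in $t$ whose coefficients are, up to constants, in $H^{k-2}\subset C^1$ (recall $k\ge5$). Hence $\partial_tz_a$, $\partial_\alpha z_a$, $\partial_t\partial_\alpha z_a$ and the densities $\sigma_a$, $\partial_t\sigma_a$ from \eqref{eq:boundary-densities} are bounded uniformly on $[0,T]$, and $\sigma_a$ has one spatial derivative to spare.

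The main obstacle is bounding $Q_a$ and $\partial_tQ_a$ in $L^\infty$ uniformly for small $t>0$. This involves the cross terms $Q_{a,-a}$, whose kernels $1/(z_a(\alpha)-z_{-a}(\beta))$ become nearly singular as the strip of width $\sim t$ collapses. For these I will use the corrected trace $\widetilde Q_a$ from the proof of Lemma~\ref{lem:normal-velocity-traces}, which absorbs the local jump term. I then expect a uniform $C^0$ bound on $\widetilde Q_a$ and on its time derivative via \eqref{eq:pressure-operator-time-derivative}. That formula expresses $\partial_t Q_{a,b}$ again through $Q_{a,b}$ applied to $\partial_\alpha$ of quantities in $H^{k-3}\subset C^1$.

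So the core estimate needed is that $Q_{a,b}^*[f]$ is bounded in $L^\infty$ by $\|f-f_\infty\|_{C^1\cap H^1}+|f_\infty|$, uniformly in $t\in[0,T]$. I would prove it from the uniform chord--arc bound \eqref{eq:mixing-chord-arc}, splitting $f(\beta)=f(\alpha)+(f(\beta)-f(\alpha))$:
\begin{itemize}
\item The difference part gives an integrable kernel bounded by $\|f'\|_\infty/d_1$ near the diagonal and by the $L^2$ tail away from it.
\item The constant part $f(\alpha)\,\mathrm{pv}\!\int d\beta/(z_a(\alpha)-z_b(\beta))$ is handled by changing variables to the arclength of $z_b$. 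Since \eqref{eq:mixing-transversality} holds and $z_b$ is a $C^2$ perturbation of a line, this integral is a bounded multiple of the logarithmic-type local term plus a remainder. The local term is exactly what the correction in $\widetilde Q_a$ cancels, leaving a uniformly bounded quantity.
\end{itemize}
These estimates are the kind collected in Section~\ref{section8}, and I would invoke them there. With them, $\partial_t\partial_\alpha G_a$ is uniformly bounded and the proposition follows.
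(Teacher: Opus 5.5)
Your argument is essentially the paper's. You reduce via \eqref{e:Galpha} to the boundary derivatives $\partial_\alpha G_a$ and show these are Lipschitz (in fact $C^1$) in time with values in $L^\infty$, combining the uniform bound \eqref{e:uniformpressure} for the nearly singular Cauchy integrals with the differentiation formula \eqref{eq:pressure-operator-time-derivative}; you then integrate in time. The paper packages this step as Lemma~\ref{l:initialderivative}, which, via \eqref{e:pressurecomparison}, also supplies the continuity at $t=0$ with the value from Lemma~\ref{lem:first-order-compatibility} that your mean value step tacitly assumes.

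Two small corrections to your sketch of the core bound are needed.
\begin{enumerate}
\item The constant-density part of $Q_{a,-a}$ is not logarithmic. After writing $f=h\,\partial_\alpha z_b$ as in Lemma~\ref{l:uniformpressure}, it is uniformly bounded by the residue identity \eqref{e:constant}. So $Q_{a,b}$ is bounded on its own, and the correction in $\widetilde Q_a$ matters only for identifying the limit at $t=0$.
\item For $k=5$, the densities $\partial_\alpha(\sigma_b/\partial_\alpha z_b)$ entering $\partial_tQ_{a,b}$ lie only in $H^1$, not $C^1$. You therefore need the $C^{1/2}$-modulus argument of Lemma~\ref{l:offcurve} rather than a Lipschitz bound on the difference kernel.
\end{enumerate}
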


\begin{proof}
In view of \eqref{e:Galpha}, it is enough to prove that
$\|\partial_\alpha G_a(t)-\partial_\alpha G_a(0)\|_\infty\le Ct$
for $a=\pm1$.
Propositions~\ref{p:first} and~\ref{p:second}, together with \eqref{e:curves}, give
\[
    z_a(t)-\mathrm{id}-\frac{aic_\infty t}{1-aA}
    \in C^\infty([0,T];H^{k-2}).
\]
For small $T$, Lemma~\ref{lem:mixing-geometry} applies.
Since $z_a$ is a quadratic polynomial in $t$ by \eqref{e:curves}, the densities \eqref{eq:boundary-densities} are cubic polynomials in time, and
Lemma~\ref{l:products} gives
\begin{equation}\label{e:densityreg}
    \sigma_b+1-bA(1-c_\infty)
    \in C^\infty([0,T];H^{k-3}).
\end{equation}
Thus $k\ge5$ supplies the hypotheses of
Lemma~\ref{l:initialderivative}:
$z-\mathrm{id}\in C_t^2H^3$,
$c\tau^\perp-ic_\infty\in H^3$, and
$\sigma_b+1-bA(1-c_\infty)\in C_t^2H^2$,
with time-independent constant part $-1+bA(1-c_\infty)$.

Applying Lemma~\ref{l:initialderivative} with $f=\sigma_b$
shows that $Q_{a,b}^*[\sigma_b]$, extended to $t=0$ by its
right limit \eqref{eq:pressure-operator-initial-limit},
belongs to $C^1([0,T];L^\infty)$.
The pure-phase Darcy trace \eqref{eq:velocity}, together with the pressure traces \eqref{eq:pressure-traces}, gives
\[
    (1+aA)\overline U_a
    =
    -\sum_{b=\pm1}Q_{a,b}[\sigma_b]
    -\frac{ai\sigma_a}{2(\partial_\alpha z_a)^*}-ai.
\]
By Lemma~\ref{l:products}, the remaining factors in this formula
and in \eqref{eq:potential-boundary-condition} are $C^1$ in time
with values in $L^\infty$. For the factor $1/(\partial_\alpha z_a)^*$ we use the uniform lower bound $|\partial_\alpha z_a|\ge d_0$ from \eqref{eq:mixing-transversality}.
Consequently,
\[
    \partial_\alpha G_a
    \in C^1([0,T];L^\infty(\mathbb R)).
\]
Since Lemma~\ref{lem:first-order-compatibility} gives
\[
    \partial_\alpha G_a(0)
    =c|\partial_\alpha z^\circ|+\frac14\sigma^{(0)},
\]
integration in time proves the estimate.
\end{proof}

\subsection{Estimate for
\texorpdfstring{$\partial_\lambda G$}{the transverse derivative of G}}
\label{section7.2}

\begin{proposition}
\label{p:transverse}
For sufficiently small positive times,
\begin{equation}\label{e:transverse}
    \partial_\lambda G=O(t^2).
\end{equation}
\end{proposition}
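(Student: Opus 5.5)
By \eqref{e:Glambda}, it suffices to show that the difference $D(t,\beta):=(\partial_\alpha G_+-\partial_\alpha G_-)(t,\beta)$ satisfies
\[
    \Bigl|\int_0^\alpha D(t,\beta)\,d\beta\Bigr|\le Ct^2
\]
uniformly in $\alpha\in\mathbb R$. The plan is a second-order Taylor expansion of $D$ in time, in a norm that is integrable on $\mathbb R$. The natural choice is $L^1$, or better $H^{-1}$-type control of the primitive, rather than $L^\infty$.

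\textbf{Step 1: vanishing of the first two coefficients.} By Lemma~\ref{lem:first-order-compatibility}, $D^{(0)}=0$. By Lemma~\ref{lem:second-order-compatibility}, $D^{(1)}=0$ via \eqref{eq:second-order-boundary-compatibility}. It therefore remains to show that $D\in C^2([0,T];X)$ for a space $X$ whose elements have primitives bounded uniformly in $\alpha$. Taylor's formula with integral remainder then gives
\[
    D(t)=\int_0^t(t-s)\,\partial_s^2D(s)\,ds,
\]
and integrating in $\beta$ from $0$ to $\alpha$ yields the claim.

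\textbf{Step 2: choice of $X$ and decay.} Write $D$ via \eqref{eq:potential-boundary-condition}. One has
\[
    \partial_\alpha G_a-\partial_\alpha G_a^{(\infty)}\in C^2_t L^2,
\]
where the far-field part $\partial_\alpha G_a^{(\infty)}$ is computed from the constant states: $z_a\approx \mathrm{id}+aic_\infty t/(1-aA)$, and $\sigma_b$ approaches its constant limit. I will check that the constant parts of $D$ cancel identically in $t$. This is the flat-interface computation with opening speed $c_\infty$, where both boundary derivatives agree. After that cancellation, $D(t)$ is a product of functions that are constant plus $H^{k-3}$, with at least one factor in $H^{k-3}$.

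$L^2$ control alone does not bound $\int_0^\alpha$ uniformly. So I will instead show that each term of $D$ is either a derivative $\partial_\alpha F$ with $F\in C^2_tL^\infty$, or a product of two $L^2$ factors, which lies in $C^2_tL^1$. The first type contains the pressure operator terms $Q_{a,b}[\sigma_b]\cdot\partial_\alpha z_a^\perp$, which I expect to handle by writing the Cauchy integral against $\partial_\alpha z_a$ as a derivative of a logarithmic kernel. The second type contains the quadratic terms in $z_a^{(1)}$, $\sigma_b$.

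If this decomposition proves awkward, I have a fallback. $D^{(0)}=D^{(1)}=0$ holds exactly, and $\partial_t^2 D$ of the polynomial-in-$t$ pieces is explicit. The only nonpolynomial pieces are $Q_{a,b}[\sigma_b]$, for which I would use the time-derivative formula \eqref{eq:pressure-operator-time-derivative} twice. This gives $\partial_t^2 Q_{a,b}[\sigma_b]$ as $Q_{a,b}$ applied to $\alpha$-derivatives of $H^{k-5}$-level quantities, multiplied by $\partial_t z_a$.

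\textbf{Step 3: uniform $C^2$ bounds for the pressure operators.} This is the main obstacle. Lemma~\ref{l:initialderivative} yields only $C^1$ in time with values in $L^\infty$, near $t=0$ where $z_a$ and $z_{-a}$ collide, so the kernel $1/(z_a-z_{-a})$ becomes singular. The second time derivative brings in two derivatives of $\sigma_b$ and $z$, and $k\ge5$ should supply exactly this: $\sigma_b\in C_tH^{k-3}\subset C_tH^2$, and after two applications of \eqref{eq:pressure-operator-time-derivative} we need a $Q_{a,b}$-type bound on $H^1$ functions uniform in $t$, including the cross terms $a\ne b$. I would extend Lemma~\ref{l:initialderivative} by one order, using the chord–arc bound \eqref{eq:mixing-chord-arc} to control the cross kernels uniformly as $t\to0^+$. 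The target is $\partial_t^2Q_{a,b}[\sigma_b]\in L^\infty_tL^2$, which combined with Step 2's structure gives the $L^1$/derivative-form bound and hence $\partial_\lambda G=O(t^2)$.
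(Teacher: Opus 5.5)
Your overall scheme is sound and close to the paper's. The two compatibility lemmas give $D(0)=\partial_tD(0)=0$, and what remains is a bound, uniform in $\alpha$ and $t\in(0,T]$, on spatial primitives of time derivatives of $D$. The paper proves $\sup_\alpha|\int_0^\alpha(\partial_tD(t)-\partial_tD(0))\,d\beta|\le Ct$, which is the Lipschitz version of your $C^2$ Taylor step. It treats the geometric terms exactly as you suggest, as products of two $L^2$ coefficients, hence $L^1$.

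\textbf{The gap is in the pressure terms, which carry the whole difficulty.} Your only mechanism for their primitives is the remark about a logarithmic kernel, applied to $D$ itself. That would require $Q_{a,b}[\sigma_b]\cdot\partial_\alpha z_a^\perp=\partial_\alpha F$ with $F\in C^2_tL^\infty$ uniformly as $z_\pm$ collide, including the cross terms $a\ne b$ with their Plemelj corrections. This is never carried out. Step~3 then targets $\partial_t^2Q_{a,b}[\sigma_b]\in L^\infty_tL^2$, which cannot bound $\int_0^\alpha$ uniformly. Since $\partial_\alpha z_a^\perp\to i$ at infinity, the product with $\partial_\alpha z_a^\perp$ is merely $L^2$ on $\mathbb R$.

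The derivative count in your fallback also fails at $k=5$. Two applications of \eqref{eq:pressure-operator-time-derivative} put $\partial_\alpha^2\sigma_b\in H^{k-5}=L^2$ inside $Q_{a,b}$, but the uniform bound \eqref{e:uniformpressure} needs $H^1$ densities. Uniform-in-$t$ $L^2$ bounds for the cross Cauchy integrals would need Coifman--McIntosh--Meyer type theory, which Lemma~\ref{l:offcurve} does not provide, and they still would not give bounded primitives. So the real obstacle is integrability at spatial infinity. The collision of the curves is already handled by \eqref{e:uniformpressure} with uniform chord--arc constants.

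\textbf{The missing idea is the structure the paper exploits, in three pieces.}

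First, on the fixed curve, $|\int_0^\alpha P[h]\,d\beta|\le C\|h\|_2$ for real $h\in L^2$. This uses the bounded phase $L=\operatorname{Im}\log q$, whose $\alpha$-derivative is the kernel of $P$ (Lemma~\ref{l:initialprimitive}). This is what your logarithmic-kernel remark has to become.

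Second, write $T_a=\frac12\sum_bP_{a,b,t}[\sigma_b]$, where $P_{a,b,t}[h]=-2\operatorname{Im}(\partial_\alpha z_a\,Q^*_{a,b}[h])-\delta_{b,-a}ah$. Then one has the exact identity \eqref{e:conservation},
\[
\partial_t\bigl(P_t[f(t)]\bigr)=P_t[\partial_tf(t)]+\partial_\alpha C_t[f(t)],\qquad C_t[f]=-2\operatorname{Im}\bigl(\partial_tz_a\,Q^*_{a,b}[f]-Q^*_{a,b}[(\partial_tz_b)f]\bigr),
\]
with $C_t[f]$ bounded by \eqref{e:uniformpressure}. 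Thus the time derivative of a pressure normal trace is $P_t$ of an $H^1$ density plus an exact $\alpha$-derivative of a bounded function. After integration in $\alpha$, the latter contributes only boundary values.

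Third, integrating this identity in time for time-independent $h$ gives $\sup_\alpha|\int_0^\alpha(P_t-P)[h]|\le Ct\|h\|_{H^1}$.

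Together these yield Lemma~\ref{l:temporalprimitive}. It needs only $f_b-f_{b,\infty}\in C^2_tH^2$ with $f_b=\sigma_b$, and only the $C^1$-in-time information of Lemma~\ref{l:initialderivative}. No $C^2$ extension of that lemma is needed.

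If you want to keep your $C^2$ Taylor step, apply the identity twice:
\[
\partial_t^2(P_t[f])=P_t[\partial_t^2f]+\partial_\alpha\bigl(C_t[\partial_tf]+\partial_tC_t[f]\bigr).
\]
This also closes at $H^2$ regularity. Without these ingredients, however, your plan does not produce the uniform bound on the primitive.
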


\begin{proof}
In view of \eqref{e:Glambda}, it is enough to check that
\begin{equation}\label{e:Dprimitive}
    \sup_{\alpha\in\mathbb R}
    \left|\int_0^\alpha D(t,\beta)\,d\beta\right|
    \le Ct^2,
    \qquad
    D=\partial_\alpha G_+-\partial_\alpha G_-.
\end{equation}
The compatibility conditions
\eqref{eq:first-order-boundary-compatibility} and
\eqref{eq:second-order-boundary-compatibility} give
$D(0)=\partial_tD(0)=0$, with right limits justified by
\eqref{e:densityreg} and Lemma~\ref{l:initialderivative}.
We claim that
\begin{equation}\label{e:Dtprimitive}
    \sup_{\alpha\in\mathbb R}
    \left|\int_0^\alpha\partial_tD(t,\beta)\,d\beta\right|
    \le Ct.
\end{equation}
Since $D(0)=0$, this implies
\[
    \sup_{\alpha\in\mathbb R}
    \left|\int_0^\alpha D(t,\beta)\,d\beta\right|
    \le
    \int_0^t\sup_{\alpha\in\mathbb R}
    \left|\int_0^\alpha\partial_sD(s,\beta)\,d\beta\right|\,ds
    \le\frac C2t^2,
\]
which proves \eqref{e:Dprimitive} and hence
\eqref{e:transverse}.

It remains to establish \eqref{e:Dtprimitive}.
Let $T_a=\widetilde Q_a\cdot(\partial_\alpha z_a)^\perp$ be as in
\eqref{e:correctedtraces} with $f_b=\sigma_b$, so that
$\widetilde Q_a$ is the corrected pressure trace of
Lemma~\ref{lem:normal-velocity-traces}.
Substituting the corrected trace identity
\eqref{eq:corrected-normal-velocity} and the density prescription
\eqref{eq:sigma} into \eqref{eq:potential-boundary-condition},
and using $z_a=z+atc\tau^\perp/(1-aA)$, gives
\[
    D(t)
    =
    \frac{2+A^2}{1-A^2}\,
    \partial_tz\cdot(\partial_\alpha z)^\perp
    +\sum_{a=\pm1}\frac{2-aA}{2(1+aA)}T_a(t)+tR,
\]
where
\[
    R=
    \frac{
        \bigl((2+A^2)c\tau^\perp+(1-A^2)i\bigr)
        \cdot\partial_\alpha(c\tau^\perp)^\perp
    }{(1-A^2)^2}
\]
depends only on $A$ and $c\tau^\perp$ and is independent of time.
Differentiating and subtracting the value at zero eliminates $R$.
Since $\partial_tD(0)=0$, this yields
\begin{equation}\label{e:Didentity}
\begin{aligned}
    \partial_tD(t)
    ={}&
    \frac{2+A^2}{1-A^2}
    \Bigl(
        \partial_t\bigl(
            \partial_tz\cdot(\partial_\alpha z)^\perp
        \bigr)(t)
        -
        \partial_t\bigl(
            \partial_tz\cdot(\partial_\alpha z)^\perp
        \bigr)(0)
    \Bigr)
    \\
    &+\sum_{a=\pm1}\frac{2-aA}{2(1+aA)}
    \bigl(\partial_tT_a(t)-\partial_tT_a(0)\bigr).
\end{aligned}
\end{equation}

In the last term of the right-hand side, we apply
Lemma~\ref{l:temporalprimitive} with $f_b=\sigma_b$.
The requirements follow from \eqref{e:densityreg} and the curve
bounds verified in the proof of Proposition~\ref{p:tangent}.
Hence \eqref{e:temporalprimitive} bounds the spatial primitive
of each term of the sum in \eqref{e:Didentity} by $Ct$.

For the geometric term, the quadratic form of $z$ gives
\[
\begin{aligned}
    &\partial_t\bigl(
        \partial_tz\cdot(\partial_\alpha z)^\perp
    \bigr)(t)
    -
    \partial_t\bigl(
        \partial_tz\cdot(\partial_\alpha z)^\perp
    \bigr)(0)
    \\
    &\qquad
    =
    t\bigl(
        2z^{(2)}\cdot(\partial_\alpha z^{(1)})^\perp
        +z^{(1)}\cdot(\partial_\alpha z^{(2)})^\perp
    \bigr)
    +\tfrac32t^2z^{(2)}
    \cdot(\partial_\alpha z^{(2)})^\perp.
\end{aligned}
\]
All products belong to $L^1$ by Cauchy--Schwarz.
Their primitives are therefore $O(t)$ uniformly in $\alpha$,
proving \eqref{e:Dtprimitive}.
\end{proof}

\begin{proposition}
\label{p:gradient}
For sufficiently small positive times,
\begin{equation}\label{e:gradient}
    \nabla g\circ Z
    =
    \left(
        c+\frac{\sigma^{(0)}}{4|\partial_\alpha z^\circ|}
    \right)\tau+O(t).
\end{equation}
\end{proposition}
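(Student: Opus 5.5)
The plan is to plug Propositions~\ref{p:tangent} and~\ref{p:transverse} into the chain-rule identity \eqref{eq:gradient-G}:
\[
    \nabla g\circ Z
    =
    \frac1{\partial_\alpha z_\lambda\cdot\tau}
    \left(
        \partial_\alpha G\,\tau
        +\frac{1-A^2}{c(\alpha)t}
        \partial_\lambda G\,\partial_\alpha z_\lambda^\perp
    \right),
\]
and to expand each factor to first order in $t$, uniformly in $\alpha$ and $\lambda$. The only inputs needed beyond those propositions are uniform bounds on the geometry of $z_\lambda$ and the lower bounds $c\ge c_*>0$ and $\partial_\alpha z_\lambda\cdot\tau\ge d_0$ from Lemma~\ref{lem:mixing-geometry}.

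First, the geometric factors. By \eqref{eq:mixing-coordinates},
\[
    \partial_\alpha z_\lambda
    =\partial_\alpha z^\circ
    +t\bigl(\partial_\alpha z^{(1)}+h(\lambda)\partial_\alpha(c\tau^\perp)\bigr)
    +\tfrac{t^2}{2}\partial_\alpha z^{(2)}.
\]
Here $|h(\lambda)|\le 1/(1-|A|)$. Propositions~\ref{p:first} and~\ref{p:second} give $\partial_\alpha z^{(1)}\in H^{k-2}$ and $\partial_\alpha z^{(2)}\in H^{k-3}$, and $\partial_\alpha(c\tau^\perp)\in H^{k-2}$. Since $k\ge5$, all of these lie in $L^\infty$ by Sobolev embedding. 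Hence
\[
    \partial_\alpha z_\lambda=\partial_\alpha z^\circ+O(t),
    \qquad
    \partial_\alpha z_\lambda\cdot\tau=|\partial_\alpha z^\circ|+O(t),
\]
uniformly in $(\alpha,\lambda)$. Combining this with the lower bound $\partial_\alpha z_\lambda\cdot\tau\ge d_0$ from \eqref{eq:mixing-transversality}, the reciprocal satisfies
\[
    \frac1{\partial_\alpha z_\lambda\cdot\tau}=\frac1{|\partial_\alpha z^\circ|}+O(t).
\]

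Next, the tangential term. By Proposition~\ref{p:tangent}, $\partial_\alpha G=c|\partial_\alpha z^\circ|+\tfrac14\sigma^{(0)}+O(t)$. The leading part is bounded because $c\in L^\infty$, $|\partial_\alpha z^\circ|$ is bounded, and $\sigma^{(0)}+2\in H^{k-1}\subset L^\infty$. Multiplying by the expansion of the reciprocal, the product of two bounded quantities each known up to $O(t)$ gives
\[
    \frac{\partial_\alpha G}{\partial_\alpha z_\lambda\cdot\tau}\,\tau
    =\Bigl(c+\frac{\sigma^{(0)}}{4|\partial_\alpha z^\circ|}\Bigr)\tau+O(t).
\]

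The transverse term is the step I expect to carry the real content, because of the singular weight $1/t$. It is handled by the extra order gained in Proposition~\ref{p:transverse}: $\partial_\lambda G=O(t^2)$, so $t^{-1}\partial_\lambda G=O(t)$. The remaining factors are bounded: $1/c\le 1/c_*$, $|\partial_\alpha z_\lambda^\perp|$ is bounded by the expansion above, and the reciprocal of $\partial_\alpha z_\lambda\cdot\tau$ is at most $1/d_0$. Therefore the whole transverse contribution is $O(t)$ uniformly. Adding the tangential and transverse contributions yields \eqref{e:gradient} for $0<t\le T$, with $T$ small enough that Lemma~\ref{lem:mixing-geometry} and the two propositions apply.
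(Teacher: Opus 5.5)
Your proposal is correct and follows the paper's proof. You substitute Propositions~\ref{p:tangent} and~\ref{p:transverse} into \eqref{eq:gradient-G}, use that the denominator is $|\partial_\alpha z^\circ|+O(t)$ and bounded away from zero, and absorb the $t^{-1}$ weight through $\partial_\lambda G=O(t^2)$ and $\inf c>0$; the only difference is that you spell out the uniform bounds on $\partial_\alpha z_\lambda$ that the paper leaves implicit.
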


\begin{proof}
Formula \eqref{eq:gradient-G} reads
\[
    \nabla g\circ Z
    =
    \frac{1}{\partial_\alpha z_\lambda\cdot\tau}
    \left(
        \partial_\alpha G\,\tau
        +\frac{1-A^2}{ct}\partial_\lambda G\,
        (\partial_\alpha z_\lambda)^\perp
    \right).
\]
The denominator is $|\partial_\alpha z^\circ|+O(t)$,
uniformly bounded away from zero.
Propositions~\ref{p:tangent} and~\ref{p:transverse},
together with $\inf c>0$, give the stated expansion.
\end{proof}

\subsection{Nondegeneracy of \texorpdfstring{$Au+i$}{Au+i}}
\label{section7.3}

\begin{proposition}
\label{p:nondegenerate}
For sufficiently small positive times,
\begin{equation}\label{e:nondegenerate}
    (Au+i)(t,Z)\cdot\tau^\perp
    =
    -\frac{\sigma^{(0)}}{2|\partial_\alpha z^\circ|}+O(t).
\end{equation}
\end{proposition}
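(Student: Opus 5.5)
The plan is to compute $(A\overline u+i)\cdot\tau^\perp$ directly from the mixing-zone Darcy law in Section~\ref{sec:velocity}, feed in the gradient expansion of Proposition~\ref{p:gradient}, and estimate the normal pressure gradient inside the strip by its boundary traces.

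\textbf{Algebraic reduction.} In $\Omega_{\mathrm{mix}}$ we have $(1+\tfrac{A^2}{2})\overline u=A\nabla^\perp g-\nabla\overline p-\tfrac A2 i$. Hence
\[
    \Bigl(1+\tfrac{A^2}{2}\Bigr)(A\overline u+i)
    =A^2\nabla^\perp g-A\nabla\overline p+i .
\]
Since $\nabla^\perp g\cdot\tau^\perp=\nabla g\cdot\tau$, Proposition~\ref{p:gradient} gives
\[
    \nabla^\perp g\circ Z\cdot\tau^\perp
    =c+\frac{\sigma^{(0)}}{4|\partial_\alpha z^\circ|}+O(t).
\]
We also have $i\cdot\tau^\perp=\tau_1=\partial_\alpha z^\circ_1/|\partial_\alpha z^\circ|$. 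It therefore remains to show that $\nabla\overline p\circ Z\cdot\tau^\perp$ equals an explicit initial quantity up to $O(t)$.

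\textbf{Pressure trace from the mixing side.} By \eqref{eq:pressure-traces}, the trace of $\nabla\overline p$ on $z_a$ from $\Omega_{\mathrm{mix}}$ is $Q_a-\frac{ai\sigma_a}{2\partial_\alpha z_a^*}$. In terms of the corrected trace of Lemma~\ref{lem:normal-velocity-traces}, this equals $\widetilde Q_a-\frac{ai(\sigma_a+\sigma_{-a})}{2\partial_\alpha z_a^*}$ conjugated appropriately. Using \eqref{eq:corrected-pressure-coefficients} and the $C^1_tL^\infty$ regularity from Lemma~\ref{l:initialderivative} (as in the proof of Proposition~\ref{p:tangent}), its normal component along $\tau^\perp$ is
\[
    \frac{P[\sigma^{(0)}]}{2|\partial_\alpha z^\circ|}
    +\frac{\mathrm{const}\cdot\sigma_{\pm}^{(0)}}{2|\partial_\alpha z^\circ|}
    +O(t).
\]
Here I use $\frac{i\sigma}{\partial_\alpha z^*}\cdot i\tau=\frac{\sigma}{|\partial_\alpha z^\circ|}$, with the sign of the second term fixed by the bookkeeping from the two sides. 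One then substitutes:
\begin{itemize}
\item $AP[\sigma^{(0)}]=\sigma^{(0)}+2\partial_\alpha z_1^\circ$, from \eqref{eq:first-order-density};
\item $\sigma_a^{(0)}=\tfrac{1-aA}{2}\sigma^{(0)}-aAc|\partial_\alpha z^\circ|$, from Lemma~\ref{lem:first-order-compatibility}.
\end{itemize}
The $c$-terms should cancel the $A^2 c$ contribution from $\nabla^\perp g$, and the $\partial_\alpha z_1^\circ$ terms should cancel $i\cdot\tau^\perp$. The two boundary traces must agree at $t=0$; this is a consistency check, since the normal pressure jump across the strip is of the form $\sigma$ times an $O(1)$ factor, so the agreement is precisely what the algebra has to confirm. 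After dividing by $1+A^2/2$, what is left should be $-\sigma^{(0)}/(2|\partial_\alpha z^\circ|)$.

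\textbf{Main obstacle: interior estimate.} The difficulty is passing from the boundary trace to interior points $Z(t,\alpha,\lambda)$ uniformly, because the strip has width $O(t)$ and both layer potentials are nearly singular there. I would bound $\partial_\lambda(\nabla\overline p\circ Z)$, which equals $\frac{tc}{1-A^2}\,(\tau^\perp\cdot\nabla)\nabla\overline p$. This requires a uniform bound on $D^2\overline p$ in $\Omega_{\mathrm{mix}}$.

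For this, I write each Cauchy integral $\frac1{2\pi}\int\frac{\sigma_b}{x-z_b}\,d\beta$ after one integration by parts as a Cauchy integral of $\partial_\beta\bigl(\sigma_b/\partial_\beta z_b\bigr)$, plus a term behaving like $\log|x-z_b|$ at infinity. By \eqref{e:densityreg} with $k\ge5$, these new densities lie in $H^{2}\subset C^{1,1/2}$. Plemelj--Privalov bounds, uniform thanks to the chord--arc estimate \eqref{eq:mixing-chord-arc}, then give $\sup_{\Omega_{\mathrm{mix}}}|D^2\overline p|\le C$ uniformly for small $t$. Integrating in $\lambda$ from $\lambda=a$ yields an $O(t)$ error, which completes the expansion \eqref{e:nondegenerate}.
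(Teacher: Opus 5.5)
Your route is essentially the paper's. You project the mixing-zone Darcy law on $\tau^\perp$, insert Proposition~\ref{p:gradient}, expand the normal pressure gradient inside the strip, and close with $(I-AP)\sigma^{(0)}=-2\partial_\alpha z^\circ_1$. Your uniform bound on $D^2\overline p$ integrated in $\lambda$ is exactly the transverse differentiation \eqref{e:transversedifferentiation} behind \eqref{e:pressurecomparison}. The only difference is that the paper applies it to each source curve $z_b$ separately, integrating from $\lambda=b$, rather than to the total field starting from a boundary $z_a$.

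Two points need fixing, though neither changes the strategy.

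\textbf{The mixing-side trace.} It is
\[
Q_a-\frac{ai\sigma_a}{2\partial_\alpha z_a^*}=\widetilde Q_a+\frac{ai(\sigma_{-a}-\sigma_a)}{2\partial_\alpha z_a^*},
\]
not an expression in $\sigma_a+\sigma_{-a}$; the sum $\sigma$ belongs to the pure-phase trace. With your displayed formula, the $\tau^\perp$-component would be $\frac{P[\sigma^{(0)}]-a\sigma^{(0)}}{2|\partial_\alpha z^\circ|}$. That depends on $a$ and contains no $c$, so the cancellation you anticipate would fail. With the correct formula, the identity $\sigma_-^{(0)}-\sigma_+^{(0)}=A\sigma^{(0)}+2Ac|\partial_\alpha z^\circ|$ gives the component $\frac{P[\sigma^{(0)}]+A\sigma^{(0)}}{2|\partial_\alpha z^\circ|}+Ac+O(t)$ on both boundaries. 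This is the normal part of \eqref{e:pressurelimit}, and the algebra then closes exactly as you predict.

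\textbf{The regularity of the differentiated densities.} $\partial_\beta(\sigma_b/\partial_\beta z_b)$ lies only in $H^{k-4}$, i.e.\ $H^1$ for $k=5$, not $H^2$. Also, no logarithmic term arises, since the boundary terms at infinity vanish. This is harmless: $H^1$ is precisely what \eqref{e:uniformpressure} (Lemma~\ref{l:offcurve} for translated families) requires for the uniform bound on $D^2\overline p$ in the strip.
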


\begin{proof}
The initial densities in
Lemma~\ref{lem:first-order-compatibility} satisfy
\[
    \sigma_-^{(0)}-\sigma_+^{(0)}
    =A\sigma^{(0)}+2Ac|\partial_\alpha z^\circ|.
\]

Since $(\nabla\overline p)^*(t,Z)=\sum_{b=\pm1}Q^*_{\lambda,b}[\sigma_b]$ by \eqref{eq:pressure-gradient}, and $\operatorname{sgn}(\lambda-b)=-b$ for $-1<\lambda<1$, estimate \eqref{e:pressurecomparison} applied to $f=\sigma_b$ (admissible by \eqref{e:densityreg}), together with 
the identity above, yields

\begin{equation}\label{e:pressurelimit}
    \nabla p(t,Z)
    =
    Q_0[\sigma^{(0)}]
    +A\left(
        c+\frac{\sigma^{(0)}}{2|\partial_\alpha z^\circ|}
    \right)\tau^\perp+O(t).
\end{equation}
The mixing-region Darcy law \eqref{eq:velocity}, with $\overline\theta=A$, gives
\begin{equation}\label{e:projectedDarcy}
    \left(1+\frac{A^2}{2}\right)(Au+i)\cdot\tau^\perp
    =
    A^2\nabla g\cdot\tau
    +(i-A\nabla p)\cdot\tau^\perp.
\end{equation}
Substituting Proposition~\ref{p:gradient} and \eqref{e:pressurelimit}
into the right-hand side of \eqref{e:projectedDarcy}, the terms
containing $c$ cancel and we obtain
\[
    \left(1+\frac{A^2}{2}\right)(A\bar u+i)(t,Z)\cdot\tau^\perp
    =
    (i-AQ_0[\sigma^{(0)}])\cdot\tau^\perp
    -\frac{A^2\sigma^{(0)}}{4|\partial_\alpha z^\circ|}+O(t).
\]
Finally, by \eqref{eq:first-order-density} and the definition
\eqref{Q-P} of $P$,
\[
    (i-AQ_0[\sigma^{(0)}])\cdot\tau^\perp
    =
    -\frac{\sigma^{(0)}}{2|\partial_\alpha z^\circ|},
\]
and the claim follows after division by $1+A^2/2$.
\end{proof}

\subsection{Completion of the proof of Theorem \ref{thm:main}}\label{section7.4}

We now choose
\begin{equation}\label{eq:opening-speed}
    c=-\bar c\,\frac{\sigma^{(0)}}{|\partial_\alpha z^\circ|},
    \qquad 0<\bar c<\frac12.
\end{equation}
Uniform instability and Proposition~\ref{p:first} give
$\inf c>0$ and $c-2\bar c\in H^{k-1}$, so all preceding
estimates apply with $c_\infty=2\bar c$.

Write $(\bar\theta,\bar u,\bar m)$ for the fields defined by
\eqref{eq:phase}, \eqref{eq:velocity} and \eqref{eq:momentum},
with $g\circ Z=G$.
We verify that they form a subsolution in the sense of
Definition~\ref{definition_subsolution}.

By Section~\ref{section4}, the prescriptions
\eqref{eq:boundary-densities} and
\eqref{eq:potential-boundary-condition} ensure continuity
of the normal velocity and the phase-conservation jump conditions.
Together with the regional identities, these give the linear
\eqref{mIPM} system in distributions.
In the pure phases, $\bar\theta=\pm1$ and
$\bar m=\bar\theta\bar u$ by construction.
By Section~\ref{sec:hullcondition},
the strict hull condition in the mixing zone is equivalent to
\[
    2|\nabla g|<|A\bar u+i|.
\]
Propositions~\ref{p:gradient} and~\ref{p:nondegenerate}
imply, pointwise,
\[
    |A\bar u+i|-2|\nabla g|
    \ge
    -(1-|4\bar c-1|)
    \frac{\sigma^{(0)}}{2|\partial_\alpha z^\circ|}
    -Ct>0
\]
for sufficiently small $T>0$, since $0<\bar c<1/2$ and
$-\sigma^{(0)}/|\partial_\alpha z^\circ|$ has a positive
lower bound.

In $\Omega_{\mathrm{mix}}$, $\nabla\bar p$ is continuous,
since its complex conjugate in \eqref{eq:pressure-gradient}
is holomorphic off $z_\pm(t)$ and depends continuously on $t$.
Moreover, $\nabla g$ is continuous by
\eqref{eq:gradient-G}, \eqref{e:Gderivs} and
Lemma~\ref{lem:mixing-geometry}.
By \eqref{eq:velocity} and \eqref{eq:momentum},
so are $\bar u$ and $\bar m$. Therefore
\[
    (\bar\theta,\bar u,\bar m)
    \in C(\Omega_{\mathrm{mix}};\mathcal O).
\]

By \eqref{e:densityreg} and Lemma~\ref{l:products},
$\sigma_b/\partial_\alpha z_b$ is the constant
$-1+bA(1-c_\infty)$ plus a function bounded in $H^1$,
uniformly in $t$.
Lemma~\ref{l:offcurve}, applied to the curves $z_b(t)$
with its uniform constant
(Lemma~\ref{lem:mixing-geometry}), together with
\eqref{e:constant} for the constant part,
bounds the pressure gradient \eqref{eq:pressure-gradient} uniformly.
The Darcy law \eqref{eq:velocity},
Proposition~\ref{p:gradient} and \eqref{eq:momentum}
then bound $\bar u$ and $\bar m$.
Thus the triple is uniformly bounded.

As $t\to0$, the boundaries converge to $z^\circ$, so
$\bar\theta(t)\to\theta^\circ$ in $L^p_{\mathrm{loc}}$
for every finite $p\ge1$.
Moreover, every $x\notin z^\circ(\mathbb R)$ lies in
$\Omega_+(t)\cup\Omega_-(t)$ for $t$ small, and
\eqref{eq:pressure-gradient} converges at $x$ to the
Cauchy integral of $\sigma^{(0)}$ on $z^\circ$.
Hence $\bar u$ and $\bar m$ converge almost everywhere,
and the uniform bound and dominated convergence yield
weak-star continuity at $t=0$; the same argument applies
at positive times, off $z_\pm(t_0)$.
Thus the triple satisfies
Definition~\ref{definition_subsolution} and attains the
prescribed initial phase.

By Lemma~\ref{lem:mixing-geometry}, the sets $\Omega_\pm$
and $\Omega_{\mathrm{mix}}$ are pairwise disjoint and relatively
open, and their time slices cover $\mathbb R^2$ up to the
chord--arc curves $z_\pm(t)$, which have measure zero.
Theorem~\ref{thm:h-principle} now produces infinitely many
mixing solutions associated with this mixing zone and initial
phase, proving Theorem~\ref{thm:main}.
For the resulting solutions, $m=\theta u$, so \eqref{mIPMc}
states that $u+A\theta u+\theta i$ is curl-free in the sense
of distributions.
A pressure satisfying \eqref{IPMc} can therefore be recovered
distributionally; see also \cite[Appendix~A]{CFM22}
for pressure recovery in the mixing framework.

\subsection{Extension to several layers}\label{sec:multilayer}
Similarly to \cite[Section~6.2]{CFM22} and
\cite[Section~5]{ForsterSzekelyhidi18}, we explain how to extend
the construction to $0<\bar c<1$ by inserting finitely many
constant phases.
We keep the presentation brief to avoid unnecessary technical
detail, since our main goal is to construct mixing solutions
rather than optimize the opening speed.
The additional analytic details are collected in
Section~\ref{sec:layerestimates}.
For unequal viscosities we discretize the flux
$F_A(\theta)=(1-\theta^2)/(1-A\theta)$ of the macroscopic
profile in \cite[Section~2]{Mengual22}.

Fix $N\ge1$ with $\bar c<1-1/(2N)$, and put
\begin{equation}\label{e:layerlevels}
    \theta_j
    =
    \frac{A-1+j/N}{1+A(-1+j/N)},
    \qquad 0\le j\le2N.
\end{equation}
Define the ordered interfaces
\begin{equation}\label{e:layercurves}
    z_j=z+tc h_j\tau^\perp,
    \qquad
    h_j
    =
    -\frac{N}{2N-1}
    \frac{F_A(\theta_j)-F_A(\theta_{j-1})}
    {\theta_j-\theta_{j-1}},
    \quad 1\le j\le2N.
\end{equation}
Strict concavity of $F_A$ gives
$h_1<\cdots<h_{2N}$.
A direct calculation yields
\[
    h_1=-\frac1{1+A},
    \qquad
    h_{2N}=\frac1{1-A},
    \qquad
    \sum_{j=1}^{2N}(\theta_j-\theta_{j-1})h_j=0.
\]
Thus $z_1=z_-$ and $z_{2N}=z_+$ have precisely the opening
prescribed by the formula for $c$ in \eqref{eq:opening-speed},
now with $0<\bar c<1-1/(2N)$.
Set $\bar\theta=\theta_j$ in the strip between $z_j$ and
$z_{j+1}$, and retain the pure phases outside.
For $N=1$, the levels are $-1,A,1$.

\begin{figure}[htbp]
    \centering
    \includegraphics[width=0.84\textwidth]{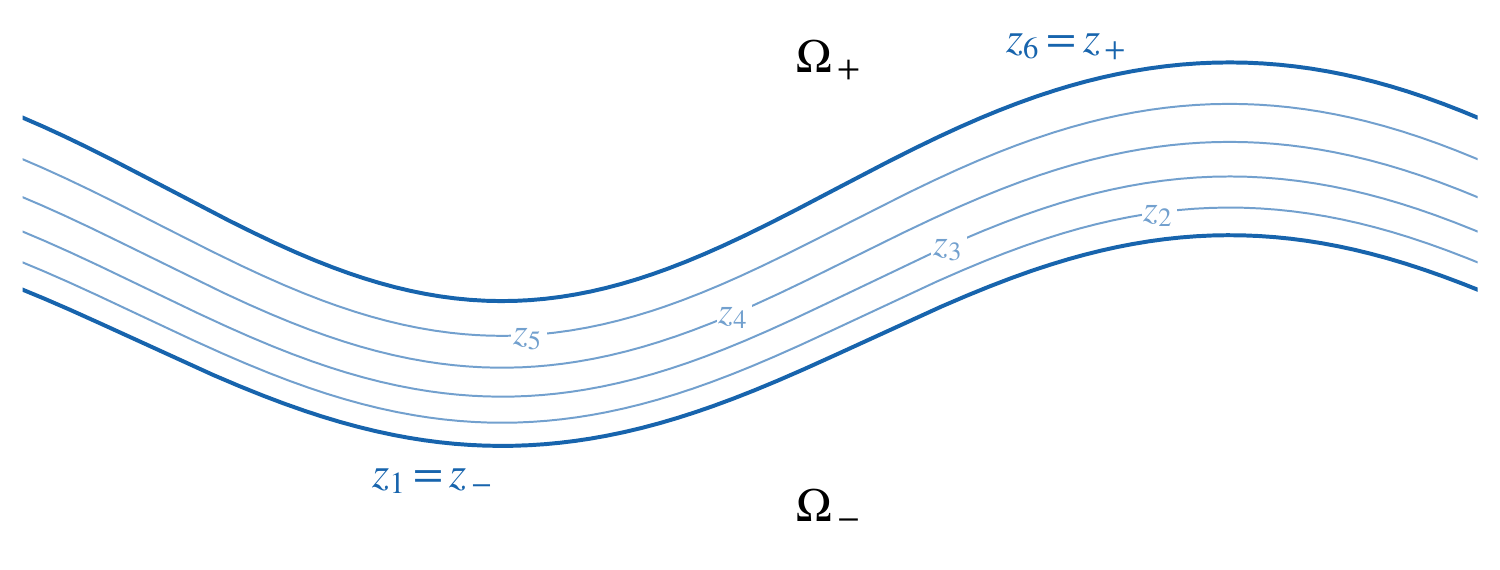}
    \caption{%
        Multilayer construction of the mixing zone
    for $N=3$ and $A=0.3$.
    The six interfaces $z_j=z+tc h_j\tau^\perp$
    delimit five intermediate strips, with
    $\bar\theta=\theta_j$ between $z_j$ and $z_{j+1}$,
    $j=1,\ldots,5$.
    }
    \label{fig:multilayer-mixing}
\end{figure}

The pressure is the sum of the same Cauchy integrals as in
Section~\ref{section4}, now with densities
\begin{equation}\label{e:layerdensities}
    \sigma_j
    =
    -(\theta_j-\theta_{j-1})(A\partial_tz_j+i)
    \cdot(\partial_\alpha z_j)^\perp.
\end{equation}
In each intermediate strip retain the momentum ansatz
\eqref{eq:momentum}, with a potential $g_j$,
\begin{equation}\label{e:layerfields}
    \bar m
    =
    \theta_j\bar u-F_A(\theta_j)
    \left(
        \nabla^\perp g_j+\frac12(A\bar u+i)
    \right).
\end{equation}
The velocity is determined by the same Darcy law
\eqref{eq:velocity}, with $\bar\theta=\theta_j$ and $g=g_j$.
In the pure phases, $\bar m=\bar\theta\bar u$.
Thus the potentials play exactly the same role as $g$
in the single-strip construction.

Lemma~\ref{l:layercompatibility}, proved in
Section~\ref{sec:layerestimates} using
Lemma~\ref{l:layerpressure}, constructs $g_j$ and a normal
coefficient $z^{(2)}\in H^{k-2}$, with the same
$z^{(1)}=-Q_0[\sigma^{(0)}]$, so that all transmission
conditions hold and
\begin{equation}\label{e:layerasymptotics}
\begin{aligned}
    (A\bar u+i)\cdot\tau^\perp
    &=
    -\frac{\sigma^{(0)}}{2|\partial_\alpha z^\circ|}
    +O(t),
    \\
    \bar m-\theta_j\bar u
    &=
    \frac{N\bar c\,\sigma^{(0)}}
    {(2N-1)|\partial_\alpha z^\circ|}
    F_A(\theta_j)\tau^\perp+O(t).
\end{aligned}
\end{equation}
All bounds are uniform within the strips for fixed $N$.

As before, the strict hull condition is
$2|\nabla g_j|<|A\bar u+i|$.
Writing $w=A\bar u+i$,
\eqref{e:layerfields}--\eqref{e:layerasymptotics} give
\begin{equation}\label{e:layerhull}
    |w|^2-4|\nabla g_j|^2
    =
    \frac{2N\bar c}{2N-1}
    \left(1-\frac{2N\bar c}{2N-1}\right)
    \frac{(\sigma^{(0)})^2}{|\partial_\alpha z^\circ|^2}
    +O(t)>0
\end{equation}
for small positive times.
Boundedness and the initial trace follow as above.
Apply Theorem~\ref{thm:h-principle} on the open union
of the strips.
The omitted internal interfaces have measure zero, and every
nonempty open subset of the full band contains an open subset
of a strip.
Consequently the resulting solutions mix throughout the full band.
For each $\bar c<1$, choosing
$N>1/(2(1-\bar c))$ proves the extension.

The quotient in \eqref{e:layercurves} is the
Rankine--Hugoniot prescription: for a scalar conservation law
$\partial_t\theta+\partial_y f(\theta)=0$, a discontinuity
joining two constant states moves at speed $[f]/[\theta]$.
Here, at each fixed $\alpha$, the physical speeds $c h_j$
correspond to the frozen scalar flux
$-Nc(\alpha)F_A/(2N-1)$.
As $N\to\infty$, the transverse step profiles converge to the
viscosity-dependent Otto profile with time scaled by $\bar c$.
Only this profile limit is used for interpretation;
the construction uses finitely many layers and does not require
a uniform existence time as $N\to\infty$.

\section{Estimates for the Birkhoff–Rott operators}\label{section8}
This section collects the operator estimates used in Sections~\ref{section6}
and~\ref{section7}. We first treat a fixed interface and prove
Proposition~\ref{p:inverse}, then study pressure traces on the moving
boundaries. The key additional estimate controls their spatial
primitives uniformly on $\mathbb R$. We conclude with the finite-layer
extension.

Throughout the fixed-interface estimates, $z^\circ$ is chord--arc,
$z^\circ-(\al,0)\in H^k(\mathbb R;\mathbb R^2)$, $k\ge5$.
Constants depend on this Sobolev norm and the chord--arc bound.
The operators $B$ and $P$ below depend on the fixed curve $z^\circ$, but we omit the subscript $0$ to keep the notation light. For $Q_0$ we retain the subscript to distinguish the initial operator the operators associated with the moving curve.

\subsection{Representation, boundary values and basic estimates}
\label{section8.1}

For real or complex densities, extend the Birkhoff--Rott integral
of Section~\ref{section3} off the interface by
\begin{equation}\label{e:Boff}
    B[f,z^\circ](x)
    =
    \frac1{2\pi i}\int_{\mathbb R}
    \frac{f(\beta)}{x-z^\circ(\beta)}\,d\beta,
    \qquad x\notin z^\circ(\mathbb R).
\end{equation}
It is holomorphic on each complementary domain.
On the interface, the same letter denotes the principal-value average
\begin{equation}\label{e:Bon}
    B[f,z^\circ](\alpha)
    =
    \frac1{2\pi i}\operatorname{pv}\int_{\mathbb R}
    \frac{f(\beta)}
    {z^\circ(\alpha)-z^\circ(\beta)}\,d\beta.
\end{equation}
We suppress the fixed curve and write $B[f]$.
The argument distinguishes the off-curve evaluation $x$ from the boundary parameter $\alpha$.

With the same kernel convention, the Cauchy integral is $C[f]=B[f\partial_\alpha z^\circ]$. Thus, most of the estimates below
also apply to $C$. Since we have not found references establishing these estimates in the precise form needed here, we include their proofs for the reader's convenience.

Throughout, principal values are understood symmetrically both at the singularity and at infinity. Densities with a constant part are interpreted using the same convention at infinity.


The operators \eqref{Q-P} are recovered as
\begin{equation}\label{e:operators}
    Q_0[f]^*=iB[f],
    \qquad
    P[f]
    =
    2Q_0[f]\cdot(\partial_\alpha z^\circ)^\perp
    =
    -2\operatorname{Re}(\partial_\alpha z^\circ B[f]).
\end{equation}
The identity for $P$ is for real $f$.
The Sokhotski--Plemelj formulas give
\begin{equation}\label{e:plemelj}
 B[f](x)\longrightarrow
 \begin{cases}
 B[f](\alpha)-\dfrac{f(\alpha)}{2\partial_\alpha z^\circ(\alpha)},
 &x\to z^\circ(\alpha),\quad x\in\Omega_{\mathrm{left}},\\[4pt]
 B[f](\alpha)+\dfrac{f(\alpha)}{2\partial_\alpha z^\circ(\alpha)},
 &x\to z^\circ(\alpha),\quad x\in\Omega_{\mathrm{right}}.
 \end{cases}
\end{equation}
For a smooth functon $f$, we can differentiate \eqref{e:Boff}. Integration by parts yields
\begin{equation}\label{e:complexderivative}
    \frac{d}{dx}B[f](x)
    =
    B\left[
        \partial_\alpha\left(
            \frac{f}{\partial_\alpha z^\circ}
        \right)
    \right](x).
\end{equation}
This formula extends to Sobolev functions.

\begin{lemma}
\label{l:offcurve}
The tangent density satisfies
\begin{equation}\label{e:constant}
    B[\partial_\alpha z^\circ](x)
    =
    \begin{cases}
        -\tfrac12,&x\in\Omega_{\mathrm{left}},\\
        \tfrac12,&x\in\Omega_{\mathrm{right}},
    \end{cases}
    \qquad
    B[\partial_\alpha z^\circ](\alpha)=0.
\end{equation}
For every $h\in H^1(\mathbb R)$,
\begin{equation}\label{e:offbound}
    \sup_{x\notin z^\circ(\mathbb R)}
    |B[h\partial_\alpha z^\circ](x)|
    \le C\|h\|_{H^1}.
\end{equation}
The principal-value integral on the curve satisfies the same estimate:
\[
    \|B[h\partial_\alpha z^\circ]\|_{L^\infty(\mathbb R)}
    \le C\|h\|_{H^1}.
\]

Moreover, if $z$ satisfies
$|z(\alpha)-z(\beta)|\ge c_0|\alpha-\beta|$ and
$\|z-(\alpha,0)-\ell\|_{H^3}\le M$
for a constant vector $\ell\in\mathbb R^2$, the same estimates hold with $C=C(c_0,M)$ for translated families.
\end{lemma}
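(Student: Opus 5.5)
The plan has three parts: first the constant (tangent) density \eqref{e:constant} by a contour argument, then a decomposition of $B[h\partial_\alpha z^\circ](x)$ for the uniform bound \eqref{e:offbound}, and finally a check that the constants depend only on $(c_0,M)$.

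\emph{Tangent density.} For $x\notin z^\circ(\mathbb R)$ we have $B[\partial_\alpha z^\circ](x)=\frac1{2\pi i}\int \frac{dz^\circ}{x-z^\circ}$, with the principal value taken symmetrically at infinity. I would truncate to $[-R,R]$ and close the contour by a large circular arc. Since $z^\circ-(\alpha,0)\in H^k$, the curve is asymptotically horizontal, so the arc contributes asymptotically $\pm\frac12$ of the full residue $-1$ (with the sign given by $\frac{1}{x-w}$ and the orientation). The remaining closed contour contains $x$ or not according to the side, which gives the constants $\mp\frac12$ and fixes the sign convention for left and right. Averaging the two Plemelj limits \eqref{e:plemelj} then gives the principal-value value $0$ on the curve.

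\emph{Uniform bound.} I would use the standard decomposition
\[
B[h\partial_\alpha z^\circ](x)=\frac1{2\pi i}\int\frac{(h(\beta)-h(\beta_x))\partial_\alpha z^\circ(\beta)}{x-z^\circ(\beta)}\,d\beta + h(\beta_x)B[\partial_\alpha z^\circ](x),
\]
where $\beta_x$ is a parameter with $|x-z^\circ(\beta_x)|$ minimal, or comparable to $\mathrm{dist}(x,z^\circ)$. The second term is bounded by $\|h\|_\infty\le C\|h\|_{H^1}$ by the first step. For the first term, the chord–arc property gives $|x-z^\circ(\beta)|\gtrsim |\beta-\beta_x|$: the closest-point geometry plus the chord–arc bound give $|x-z^\circ(\beta)|\ge c(|\beta-\beta_x|+d)$, with $d$ the distance. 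I then split the integral. On $|\beta-\beta_x|\le1$ I use the $C^{1/2}$ bound $|h(\beta)-h(\beta_x)|\le \|h'\|_2|\beta-\beta_x|^{1/2}$, which makes the integrand at most $|\beta-\beta_x|^{-1/2}$ and hence integrable. On $|\beta-\beta_x|>1$ I bound $|h(\beta)|/|\beta-\beta_x|$ by Cauchy–Schwarz using $\|h\|_2$. The remaining piece $h(\beta_x)\int_{|\beta-\beta_x|>1}\frac{\partial_\alpha z^\circ}{x-z^\circ}$ is again a truncated tangent integral, and it is bounded because of the explicit logarithmic primitive $\log(x-z^\circ(\beta))$ together with the symmetric principal value at infinity. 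The on-curve principal value is handled identically with $x=z^\circ(\alpha)$, $\beta_x=\alpha$; there the near part needs no principal value because $h(\beta)-h(\alpha)$ removes the singularity, apart from the tangent term, which vanishes.

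\emph{Uniformity for families.} Every constant above used only the chord–arc constant $c_0$, the quantity $\|\partial_\alpha z-1\|_{L^\infty\cap L^2}$, and the far-field flatness. All of these are controlled by $c_0$ and $\|z-(\alpha,0)-\ell\|_{H^3}\le M$, and a translation by $\ell$ does not change the kernel differences. So $C=C(c_0,M)$. The main obstacle is the tangent-density step, specifically the convergence at infinity and the precise $\pm\frac12$. I would handle it by writing $\partial_\alpha z^\circ=1+(\partial_\alpha z^\circ-1)$ and comparing with the flat line, where the integral is explicit. The $L^2$ error is absorbed by the same Cauchy–Schwarz tail bound, since $|x-z^\circ(\beta)|^{-1}\in L^2(d\beta)$ away from $\beta_x$.
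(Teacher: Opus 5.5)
Your plan is essentially the paper's proof. It uses the same closest-point bound $|x-z^\circ(\beta)|\ge C^{-1}(|\beta-\beta_x|+d)$ and the same split at $|\beta-\beta_x|=1$ with $h(\beta_x)$ subtracted. Your global subtraction, once the far part is re-split, is exactly the paper's $I_1+h(\alpha_x)I_2+I_3$. The H\"older and Cauchy--Schwarz bounds and the translation argument are also the same. Deriving the on-curve bound directly, instead of through the Plemelj limit, is a harmless variation, although it relies on the same far-tangent bound discussed next, with $d=0$.

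The one step that does not work as justified is the bound on the truncated tangent integral $\int_{|\beta-\beta_x|>1}\partial_\alpha z^\circ(\beta)/(x-z^\circ(\beta))\,d\beta$.

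\begin{itemize}
\item The logarithmic primitive controls its real part, since $|x-z^\circ(\beta_x\pm1)|$ are comparable and $\log|x-z^\circ(-R)|-\log|x-z^\circ(R)|\to0$.
\item The imaginary part, however, is the variation of a continuous argument of $x-z^\circ(\beta)$ along the far pieces, and the primitive alone does not bound it. Using one branch along the whole curve only rewrites the far integral as the full integral minus the near one.
\item The crude bound for the near argument variation is $\int_{|r|<1}C\,dr/(|r|+d)\sim C\log(1/d)$, which is not uniform as $d\to0$.
\end{itemize}

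So some no-winding input is needed. The paper supplies it by subtracting the flat kernel $1/(\alpha_x-\beta)$, whose symmetric far integral vanishes. It then bounds the difference by $C|\partial_\alpha z^\circ-1|/|\beta-\alpha_x|+C(1+\|z^\circ-(\alpha,0)\|_\infty)/|\beta-\alpha_x|^2$. This step uses $|x-\alpha_x|\le d+\|z^\circ-(\alpha,0)\|_\infty$, so it is uniform only for $d<1$; for large $d$ the same estimate grows like $\log(1+d)$. That is why the paper treats $d\ge1$ separately, by a direct Cauchy--Schwarz bound $C\|h\|_2d^{-1/2}$ with no subtraction at all.

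Your closing remark about comparing with the flat line is the right tool, but you aimed it at the tangent identity. It needs to be applied here, together with the case split $d<1$ / $d\ge1$. Alternatively, you could replace it with an explicit winding bound that uses $\|z^\circ-(\alpha,0)\|_\infty$ to confine $\arg(x-z^\circ(\beta))$ outside a bounded parameter window.
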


\begin{proof}

Identity \eqref{e:constant} follows immediately from the residue theorem and the Sokhotski--Plemelj formula \eqref{e:plemelj}, since $B[\partial_\alpha z^\circ]$ is the Cauchy integral of the constant density $1$ along $z^\circ(\mathbb R)$.

For \eqref{e:offbound}, set
$d=\operatorname{dist}(x,z^\circ(\mathbb R))$ and choose
$\alpha_x\in\mathbb R$ such that $|x-z^\circ(\alpha_x)|=d$.
Minimality gives $|x-z^\circ(\beta)|\ge d$, while the chord--arc condition and the triangle inequality give the inequality $c_0|\beta-\alpha_x|\le |x-z^\circ(\beta)|+d\le2|x-z^\circ(\beta)|$. Thus
\begin{equation}\label{e:lowerkernel}
    |x-z^\circ(\beta)|
    \ge C^{-1}(|\beta-\alpha_x|+d).
\end{equation}
\emph{Case $d\ge1$.} By \eqref{e:lowerkernel} and Cauchy--Schwarz,
\[
    |B[h\partial_\alpha z^\circ](x)|
    \le C\|h\|_2\Bigl(\int_{\mathbb R}\frac{dr}{(|r|+d)^2}\Bigr)^{1/2}
    = C\|h\|_2\Bigl(\frac2d\Bigr)^{1/2}
    \leq C\|h\|_2.
\]
\emph{Case $d<1$.} We split at $|\beta-\alpha_x|=1$ and subtract $h(\alpha_x)$ in the near integral to get
\[
    2\pi i\,B[h\partial_\alpha z^\circ](x)= I_1+h(\alpha_x)\,I_2+I_3,
\]
where
\[
    I_1=\int_{|\beta-\alpha_x|<1}\!\!\!\!\!\!\!\!\!\!\!\!\!\frac{(h(\beta)-h(\alpha_x))\,\partial_\alpha z^\circ(\beta)}{x-z^\circ(\beta)}\,d\beta, \,
    I_2=\int_{|\beta-\alpha_x|<1}\frac{\partial_\alpha z^\circ(\beta)}{x-z^\circ(\beta)}\,d\beta, \,
    I_3=\int_{|\beta-\alpha_x|\ge1}\frac{h(\beta)\,\partial_\alpha z^\circ(\beta)}{x-z^\circ(\beta)}\,d\beta.
\]
In $I_1$, the fundamental theorem of calculus and the Cauchy--Schwarz inequality give
\[
    |h(\beta)-h(\alpha_x)|
    \le\|\partial_\alpha h\|_2|\beta-\alpha_x|^{1/2}.
\]
The above bound together with \eqref{e:lowerkernel} yield
\[
    |I_1|\le C\|\partial_\alpha h\|_2
    \int_0^1\frac{r^{1/2}}{r+d}\,dr
    \le C\|\partial_\alpha h\|_2\int_0^1 r^{-1/2}\,dr
    \le C\|\partial_\alpha h\|_2.
\]
By \eqref{e:lowerkernel} and the Cauchy--Schwarz inequality,
\[
    |I_3|\le C\|h\|_2\Bigl(\int_{|r|\ge1}\frac{dr}{r^2}\Bigr)^{1/2}\le C\|h\|_2.
\]
For $I_2$, we write the integral as the full integral in \eqref{e:constant} minus its far part as follows
$$
I_2 = 2\pi i\,B[\partial_\alpha z^\circ](x)
 -\operatorname{pv}\int_{|\beta-\alpha_x|\ge1}\frac{\partial_\alpha z^\circ(\beta)}{x-z^\circ(\beta)}\,d\beta.
$$
By \eqref{e:constant} the first term has modulus $\pi$. For $|\beta-\alpha_x|\ge1$, we subtract the flat kernel, whose symmetric integral vanishes and we rewrite the numerator to get
\[
    \frac{\partial_\alpha z^\circ(\beta)}{x-z^\circ(\beta)}-\frac1{\alpha_x-\beta}
    =\frac{\partial_\alpha z^\circ(\beta)-1}{x-z^\circ(\beta)}
    +\frac{(z^\circ(\beta)-\beta)-(x-\alpha_x)}{(x-z^\circ(\beta))(\alpha_x-\beta)}.
\]
Here $|z^\circ(\beta)-\beta|\le\|z^\circ-(\alpha,0)\|_\infty$ and
$|x-\alpha_x|\le|x-z^\circ(\alpha_x)|+|z^\circ(\alpha_x)-\alpha_x|\le d+\|z^\circ-(\alpha,0)\|_\infty$.
Hence, by \eqref{e:lowerkernel},
\[
    \left|
        \frac{\partial_\alpha z^\circ(\beta)}{x-z^\circ(\beta)}
        -\frac1{\alpha_x-\beta}
    \right|
    \le C\left(
        \frac{|\partial_\alpha z^\circ(\beta)-1|}{|\beta-\alpha_x|}
        +\frac{1+\|z^\circ-(\alpha,0)\|_\infty}{|\beta-\alpha_x|^2}
    \right).
\]
By Cauchy--Schwarz, the first term is integrable over $\{|\beta-\alpha_x|\ge1\}$ with integral at most $C\|\partial_\alpha z^\circ-1\|_2$, and the second is integrable directly. Thus $|I_2|\le C$.

Combining the bounds for $I_1$, $I_2$ and $I_3$ with $\|h\|_\infty\le C\|h\|_{H^1}$
proves \eqref{e:offbound}.

For the principal-value estimate, note that $h\in H^1(\mathbb R)\subset C^{1/2}(\mathbb R)$, so
\eqref{e:plemelj} applies to $f=h\partial_\alpha z^\circ$. Letting $\Omega_{\mathrm{left}} \ni x\to z^\circ(\alpha)$, we obtain
\[
    B[h\partial_\alpha z^\circ](\alpha)
    =\lim_{\substack{x\to z^\circ(\alpha)\\ x\in\Omega_{\mathrm{left}}}}B[h\partial_\alpha z^\circ](x)+\frac{h(\alpha)}{2}.
\]
Hence, by \eqref{e:offbound},
\[
    \|B[h\partial_\alpha z^\circ]\|_{L^\infty(\mathbb R)}
    \le \sup_{x\notin z^\circ(\mathbb R)}|B[h\partial_\alpha z^\circ](x)|+\tfrac12\|h\|_\infty
    \le C\|h\|_{H^1}.
\]

For the last assertion, set $\tilde z=z-\ell$. Then $\partial_\alpha\tilde z=\partial_\alpha z$,
$\tilde z$ satisfies the same chord--arc bound with constant $c_0$, and
$\|\tilde z-(\alpha,0)\|_{H^3}\le M$. Moreover, $x\notin z(\mathbb R)$ if and only if
$x-\ell\notin\tilde z(\mathbb R)$, and
$x-z(\beta)=(x-\ell)-\tilde z(\beta)$. Therefore
\[
    B[h\partial_\alpha z,z](x)=B[h\partial_\alpha\tilde z,\tilde z](x-\ell),
    \qquad
    B[h\partial_\alpha z,z](\alpha)=B[h\partial_\alpha\tilde z,\tilde z](\alpha),
\]
where the second identity uses $z(\alpha)-z(\beta)=\tilde z(\alpha)-\tilde z(\beta)$.
The proof above uses the curve only through $c_0$, $\|\partial_\alpha\tilde z\|_\infty$,
$\|\partial_\alpha\tilde z-1\|_2$ and $\|\tilde z-(\alpha,0)\|_\infty$, all of which are bounded in
terms of $c_0$ and $M$. Applying it to $\tilde z$ gives both estimates for $z$ with $C=C(c_0,M)$,
independently of $\ell$.
\end{proof}

We next estimate spatial primitives of $P$.
Set
\begin{equation}\label{e:q}
    q(\alpha,\beta)
    =
    \frac{z^\circ(\alpha)-z^\circ(\beta)}{\alpha-\beta}
    =
    \int_0^1
    \partial_\alpha z^\circ
    \bigl(\beta+r(\alpha-\beta)\bigr)\,dr.
\end{equation}
It is symmetric, with
$c_0\le|q|\le\|\partial_\alpha z^\circ\|_\infty$.
Differentiating \eqref{e:q} in $\alpha$ yields
\[
    \frac{\partial_\alpha z^\circ(\alpha)}{z^\circ(\alpha)-z^\circ(\beta)}
    =\frac{\partial_\alpha q}{q}(\alpha,\beta)+\frac1{\alpha-\beta}.
\]
Hence, for real $f$, \eqref{e:operators} and \eqref{e:Bon} yield
\begin{equation}\label{e:Pkernel}
    P[f](\alpha)
    =
    -\frac1\pi\int_{\mathbb R}
    \operatorname{Im}\left(
        \frac{\partial_\alpha q}{q}
    \right)(\alpha,\beta)f(\beta)\,d\beta,
\end{equation}
since the flat kernel $1/(\alpha-\beta)$ is real and does not contribute to the imaginary part.

\begin{lemma}
\label{l:initialprimitive}
For every real $h\in L^2(\mathbb R)$,
\begin{equation}\label{e:initialprimitive}
    \sup_{\alpha\in\mathbb R}
    \left|\int_0^\alpha P[h](\beta)\,d\beta\right|
    \le C\|h\|_2.
\end{equation}
\end{lemma}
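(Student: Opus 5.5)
Using the kernel representation \eqref{e:Pkernel}, Fubini gives
\[
    \int_0^\alpha P[h](\gamma)\,d\gamma
    =
    -\frac1\pi\int_{\mathbb R} h(\beta)\,K_\alpha(\beta)\,d\beta,
    \qquad
    K_\alpha(\beta):=\int_0^\alpha \operatorname{Im}\Bigl(\frac{\partial_\alpha q}{q}\Bigr)(\gamma,\beta)\,d\gamma .
\]
By Cauchy--Schwarz, it therefore suffices to prove $\sup_\alpha\|K_\alpha\|_{L^2(d\beta)}\le C$. The key observation is that $\partial_\alpha q/q=\partial_\alpha\log q$, where a continuous branch of $\log q$ exists because $|q|\ge c_0$ and $q(\alpha,\beta)\to1$ as $|\alpha-\beta|\to\infty$. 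The $\gamma$-integral is then exact:
\[
    K_\alpha(\beta)=\arg q(\alpha,\beta)-\arg q(0,\beta),
\]
with the continuous branch normalized so that $\arg q\to0$ at infinity. Once this identity is established, the estimate reduces to the uniform bound $\sup_\alpha\|\arg q(\alpha,\cdot)\|_{L^2}\le C$.

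For that bound I will use $|\arg q|\le C|\operatorname{Im} q|+C|q-1|$, or more simply $|\arg q|\le C|q-1|$ since $q$ stays in a compact subset of $\mathbb C\setminus(-\infty,0]$ where $\log$ is Lipschitz. I then bound $\|q(\alpha,\cdot)-1\|_{L^2}$ uniformly in $\alpha$. Writing $w=z^\circ-(\alpha,0)\in H^k$, we have $q(\alpha,\beta)-1=\frac{w(\alpha)-w(\beta)}{\alpha-\beta}$. I split into two regions.
\begin{itemize}
\item[(i)] For $|\alpha-\beta|\le1$: $|q-1|\le\|\partial_\alpha w\|_\infty$, so the contribution to the $L^2$ norm is at most $\sqrt2\,\|\partial_\alpha w\|_\infty$.
\item[(ii)] For $|\alpha-\beta|>1$: $|q-1|\le(|w(\alpha)|+|w(\beta)|)/|\alpha-\beta|$. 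Its $L^2$ norm is at most $C\|w\|_\infty+\|w\|_2$.
\end{itemize}
Both bounds are independent of $\alpha$ by Sobolev embedding, which proves \eqref{e:initialprimitive}.

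The main obstacle is justifying the exact integration. The branch of $\log q(\cdot,\beta)$ must be continuous in $\gamma$ along $[0,\alpha]$, including across the diagonal $\gamma=\beta$. There $q(\beta,\beta)=\partial_\alpha z^\circ(\beta)$ is smooth and nonzero, so $q$ is $C^1$ in $\gamma$ and no singularity arises. Since the continuous kernel is integrable, the principal value in \eqref{e:Pkernel} is then an honest integral. A further point is that $q$ never winds around the origin, since it remains in a region bounded away from $0$ and tending to $1$. Because $q$ is $C^1$ in $\gamma$ and nonvanishing, the increment $K_\alpha(\beta)$ is well defined modulo nothing: it is exactly the change of a continuous argument. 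The pointwise bound $|K_\alpha|\le|\arg q(\alpha,\beta)|+|\arg q(0,\beta)|$ would hold with the same branch if needed, but the normalized-at-infinity branch must be chosen consistently so that the bound $|\arg q|\le C|q-1|$ holds. If the image of $q$ could reach the negative real axis, I would instead take the $\beta$-independent continuous branch on $\mathbb R^2$ via homotopy from the flat curve, using chord--arc non-degeneracy. Fubini is justified first for $h\in C_c^\infty$ and then extended by density, since both sides are continuous in $L^2$ by Proposition~\ref{p:inverse}.
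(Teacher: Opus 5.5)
Your overall strategy is the paper's. You write the kernel of $P$ as $-\pi^{-1}\partial_\alpha L$ with $L=\operatorname{Im}\log q$ a continuous phase, integrate exactly in the first variable to get $L(\alpha,\beta)-L(0,\beta)$, and conclude by Cauchy--Schwarz and density.

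\textbf{The gap.} The step meant to give $\sup_\alpha\|L(\alpha,\cdot)\|_{2}\le C$ does not hold in the generality of the lemma. You derive it from the pointwise bound $|\arg q|\le C|q-1|$, justified by the claim that $q$ stays in a compact subset of $\mathbb C\setminus(-\infty,0]$. That claim holds only if $\operatorname{Re}\partial_\alpha z^\circ$ has a positive lower bound; for graphs, for instance, $\operatorname{Re}q\equiv1$. The lemma, however, is stated for arbitrary chord--arc curves with $z^\circ-(\alpha,0)\in H^k$, and these may overturn.
\begin{itemize}
\item For an overturned interface, $q(\alpha,\alpha)=\partial_\alpha z^\circ(\alpha)$ is a negative real number at some points.
\item For a smooth, asymptotically flat curve containing a double spiral, $L$ restricted to the diagonal is the tangent angle measured from the far-field direction. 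This angle equals $-2\pi n\neq0$ at points where $\partial_\alpha z^\circ$ is real positive, and after reparametrizing one can take $\partial_\alpha z^\circ=1$ there.
\end{itemize}
So no global inequality $|L|\le C|q-1|$, nor $|L|\le C|\operatorname{Im}q|+C|q-1|$, is available. Your fallback, a ``$\beta$-independent branch via homotopy from the flat curve'', does not produce any bound. Relatedly, a continuous logarithm exists simply because $\mathbb R^2$ is simply connected. Normalizing it ``at infinity'' needs $q\to1$ along every direction, not only as $|\alpha-\beta|\to\infty$; the diagonal region far out is left uncontrolled in your argument.

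\textbf{The fix, as in the paper.}
\begin{enumerate}
\item Prove $q(\alpha,\beta)\to1$ as $|\alpha|+|\beta|\to\infty$. Writing $w=z^\circ-(\alpha,0)$, use $|q-1|\le 2\|w\|_\infty/R$ when $|\alpha-\beta|\ge R$. When $|\alpha-\beta|\le R$, use the segment formula $q-1=\int_0^1\partial_\alpha w(\beta+r(\alpha-\beta))\,dr$ and the decay of $\partial_\alpha w$ at infinity.
\item Since the exterior of a large disk $D_R$ is connected and mapped close to $1$, you may normalize $L$ to be the principal phase there. Then $|L|\le C|q-1|$ on $\mathbb R^2\setminus D_R$.
\item On $D_R$, $L$ is bounded by continuity and compactness. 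Each section $\{\beta:(\alpha,\beta)\in D_R\}$ has length at most $2R$.
\item Combining this with your region (i)/(ii) estimate for $q(\alpha,\cdot)-1$ gives $\|L(\alpha,\cdot)\|_2\le C\|q(\alpha,\cdot)-1\|_2+C\sqrt{2R}$, uniformly in $\alpha$.
\end{enumerate}
With this replacement, the rest of your argument, including the Fubini and density step, goes through.
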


\begin{proof}
Let $q$ be as in \eqref{e:q}, extended continuously to the diagonal by $q(\alpha,\alpha)=\partial_\alpha z^\circ(\alpha)$. Then
\[
    |q-1|
    \le\frac{2\|z^\circ-(\alpha,0)\|_\infty}{|\alpha-\beta|},
    \qquad
    q-1
    =
    \int_0^1
    \partial_\alpha(z^\circ-(\alpha,0))\bigl(\beta+r(\alpha-\beta)\bigr)\,dr.
\]
We claim that $q(\alpha,\beta)\to1$ as $|\alpha|+|\beta|\to\infty$. 

Fix $R>0$.
If $|\alpha-\beta|\ge R$, the first estimate gives $|q-1|\le 2\|z^\circ-(\alpha,0)\|_\infty/R$.

If $|\alpha-\beta|\le R$, every point $s$ of the segment between $\beta$ and $\alpha$ satisfies
$|s|\ge\frac12(|\alpha|+|\beta|)-R$, so the second identity gives
\[
|q(\alpha,\beta)-1|\le\sup_{|s|\ge\frac12(|\alpha|+|\beta|)-R}|\partial_\alpha(z^\circ-(\alpha,0))(s)|,
\]
which tends to zero as $|\alpha|+|\beta|\to\infty$, because $\partial_\alpha(z^\circ-(\alpha,0))\in H^{k-1}$
vanishes at infinity. Since $R$ is arbitrary, this proves the claim.

Since $q$ is continuous and nonvanishing on $\mathbb R^2$,
it has a continuous logarithm.
Normalize it to agree with the branch near $1$ outside a sufficiently
large disk, and set $L=\operatorname{Im}(\log q)$.
This real-valued function is bounded and satisfies
\begin{equation}\label{e:phasebounds}
    |L(\alpha,\beta)|
    \le\frac C{|\alpha-\beta|}
    \quad\text{for }|\alpha-\beta|>1,
    \qquad
    \sup_\alpha\|L(\alpha,\cdot)\|_2\le C.
\end{equation}
By \eqref{e:Pkernel}, the kernel of $P$ is
$-\pi^{-1}\partial_\alpha L$.
Hence
\begin{equation}\label{e:phaseprimitive}
    \int_0^\alpha P[h](\beta)\,d\beta
    =
    -\frac1\pi\int_{\mathbb R}
    h(\gamma)
    \bigl(L(\alpha,\gamma)-L(0,\gamma)\bigr)\,d\gamma.
\end{equation}

For smooth compactly supported $h$, Fubini's theorem justifies \eqref{e:phaseprimitive}, and the Cauchy--Schwarz inequality together with \eqref{e:phasebounds} gives \eqref{e:initialprimitive}.
For general real $h\in L^2$, choose such $h_n$ with $h_n\to h$ in $L^2$. By the same bound, the
right-hand side of \eqref{e:phaseprimitive} for $h_n$ converges uniformly in $\alpha$ to that for $h$. Since $P$ is bounded on $L^2$ by Proposition~\ref{p:sobolev} and \eqref{e:operators}, the left-hand
side converges uniformly on bounded intervals. Hence \eqref{e:phaseprimitive} and
\eqref{e:initialprimitive} hold for all real $h\in L^2$.
\end{proof}

\subsection{Sobolev estimates}

Let
\[
 H[f](\alpha)=\frac1\pi\operatorname{pv}\int_{\mathbb R}
 \frac{f(\beta)}{\alpha-\beta}\,d\beta
\]
be the Hilbert transform, so that $B[g]=-\tfrac i2H[g]$ for the flat interface $z^\circ=(\alpha,0)$.
Subtracting the flat operator from $B[g\partial_\alpha z^\circ]$ leaves
\begin{equation}\label{e:R}
    R[g]=B[g\partial_\alpha z^\circ]+\tfrac i2H[g],
\end{equation}
whose kernel, thanks to the tangent factor, is no longer singular, and which gains one derivative.
The same smoothing holds for the pressure operator $P$ and its adjoint. This regularization is
classical, see \cite[Lemmas~3.1 and~5.2]{CCG11}. We give a short proof for the present
asymptotically flat setting, keeping track of the behavior at infinity.

\begin{proposition}
\label{p:sobolev}\label{l:pressureoperator}
The operator $B$ is bounded on $H^j$, $0\le j\le k-1$.
For every $f\in H^1$,
\begin{equation}\label{e:Bderivative}
 \partial_\alpha B[f]
 =\partial_\alpha z^\circ
 B\!\left[\partial_\alpha\!\left(\frac f{\partial_\alpha z^\circ}\right)\right]
.
\end{equation}
Moreover, the operator
$g\mapsto B[g\partial_\alpha z^\circ]+\tfrac i2H[g]$
and the real operators $P,P^\dagger$ map $H^j$ boundedly into
$H^{j+1}$ for $0\le j\le k-2$.
The operator $P$ is compact on real $L^2$, and its adjoint is
\begin{equation}\label{e:adjoint}
 P^\dagger[g]=2\operatorname{Re}B[g\partial_\alpha z^\circ].
\end{equation}
\end{proposition}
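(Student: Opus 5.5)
The plan is to reduce everything to the flat Hilbert transform plus a smoothing remainder. I will use the divided difference $q$ of \eqref{e:q} and write the kernels in terms of it. Since $z^\circ(\alpha)-z^\circ(\beta)=(\alpha-\beta)q(\alpha,\beta)$, we have
\[
 2\pi i\,B[g\partial_\alpha z^\circ](\alpha)+\pi i H[g](\alpha)
 =\int_{\mathbb R} g(\beta)\,\frac{\partial_\alpha z^\circ(\beta)-q(\alpha,\beta)}{(\alpha-\beta)q(\alpha,\beta)}\,d\beta .
\]
The numerator vanishes to first order on the diagonal, so the kernel
\[
 K(\alpha,\beta)=\frac{\partial_\alpha z^\circ(\beta)-q}{(\alpha-\beta)q}
\]
is bounded. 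Writing $\partial_\alpha z^\circ(\beta)-q=\int_0^1(\partial_\alpha z^\circ(\beta)-\partial_\alpha z^\circ(\beta+r(\alpha-\beta)))\,dr$ expresses $K$ through averages of $\partial_\alpha^2 z^\circ$. Off the diagonal, $K$ decays like $|\partial_\alpha z^\circ(\beta)-1|/|\alpha-\beta|+|\alpha-\beta|^{-2}$, using $|q-1|\le C/|\alpha-\beta|$ exactly as in the proof of Lemma~\ref{l:initialprimitive}. A Schur test, or Cauchy--Schwarz in the near and far regions as in Lemma~\ref{l:offcurve}, then gives $L^2$ boundedness of $R$ from \eqref{e:R}.

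For higher regularity I differentiate in $\alpha$ after the change of variables $\beta=\alpha-s$. Each $\partial_\alpha$ either falls on $g$ or produces kernels of the same structure with one more derivative on $z^\circ$. This yields $\|R[g]\|_{H^{j+1}}\le C\|g\|_{H^j}$ as long as the derivatives of $z^\circ$ involved stay in $L^2$/$L^\infty$, i.e.\ $j+2\le k$. Boundedness of $B$ on $H^j$ follows by writing $B[f]=B[(f/\partial_\alpha z^\circ)\partial_\alpha z^\circ]=-\tfrac i2H[f/\partial_\alpha z^\circ]+R[f/\partial_\alpha z^\circ]$ and invoking Lemma~\ref{l:products} for the division, which is harmless since $1/\partial_\alpha z^\circ-1\in H^{k-1}$.

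Formula \eqref{e:Bderivative} comes from \eqref{e:complexderivative}. Taking the boundary limit from either side via \eqref{e:plemelj}, the chain rule $\partial_\alpha(B[f](z^\circ(\alpha)))=\partial_\alpha z^\circ\,\tfrac{d}{dx}B[f]$ holds. The jump terms $\pm\tfrac12\partial_\alpha(f/\partial_\alpha z^\circ)$ and $\pm\tfrac12 f/\partial_\alpha z^\circ$ are consistent, because the $\alpha$-derivative of the boundary value $\pm f/(2\partial_\alpha z^\circ)$ equals $\partial_\alpha z^\circ$ times the jump of the derivative divided by $\partial_\alpha z^\circ$. I would check this first for smooth compactly supported $f$ and conclude by density.

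For $P$, I use \eqref{e:Pkernel}. Its kernel $-\pi^{-1}\operatorname{Im}(\partial_\alpha q/q)$ has no flat singular part, and $\partial_\alpha q(\alpha,\beta)=\int_0^1 r\,\partial_\alpha^2 z^\circ(\beta+r(\alpha-\beta))\,dr$ is bounded. The same near/far analysis gives $P:H^j\to H^{j+1}$. The adjoint \eqref{e:adjoint} is a direct computation with Fubini: pairing $-2\operatorname{Re}(\partial_\alpha z^\circ B[f])$ against $g$, the antisymmetry of $1/(z^\circ(\alpha)-z^\circ(\beta))$ moves the tangent factor onto $g$. One must justify the principal values by truncation, noting that the symmetric truncation commutes with swapping variables. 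The smoothing estimate for $P^\dagger$ then follows from that of $R$, since $\operatorname{Re}(-\tfrac i2 H[g])=0$ for real $g$. Compactness of $P$ on $L^2$ comes from $P:L^2\to H^1$ bounded together with decay: I would show the kernel can be approximated in Hilbert--Schmidt-like norm, or split $P=\chi_RP+(1-\chi_R)P$. The far part has small operator norm, because the kernel bound $C(|\partial_\alpha z^\circ(\beta)-1|/|\alpha-\beta|+|\alpha-\beta|^{-2})$ on $|\alpha|>R$ becomes small using the vanishing of $\partial_\alpha z^\circ-1$ at infinity. The localized part is compact by Rellich.

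I expect the main obstacle to be the $H^{j+1}$ bookkeeping for $R$ at the endpoint $j=k-2$. There the top derivative $\partial_\alpha^{k}z^\circ$ appears only in $L^2$ inside the kernel, so it must be estimated with $g$ in $L^\infty$ (via $H^1$ when $j\ge1$, or via a separate Cauchy--Schwarz argument when $j=0$). I would handle it by placing that factor in $L^2$ in the $\beta$-integral and using the integrability $\int_0^1 r^{\ell}\,dr$ from the averaged representation.
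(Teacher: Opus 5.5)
Your overall architecture matches the paper in several places: the flat Hilbert transform plus a remainder $R$, the Plemelj-trace derivation of \eqref{e:Bderivative}, the Fubini computation of \eqref{e:adjoint}, and $P^\dagger=2\operatorname{Re}R$. The central claim, however, is not established by what you propose. That claim is the gain of one derivative for $R$ up to $j=k-2$, on which the statements for $P$, $P^\dagger$, and $B$ on $H^{k-1}$ all rest.

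\begin{itemize}
\item Your first step only gives $R:L^2\to L^2$. You never show that $\partial_\alpha K$ is also a bounded kernel, and that is where the base case $R:L^2\to H^1$ comes from.
\item The Leibniz expansion in the variables $\beta=\alpha-s$ cannot produce a gain. The term with all $j+1$ derivatives on $g$ is $R[\partial_\alpha^{j+1}g]$, which needs $g\in H^{j+1}$.
\item The term with all derivatives on the kernel differentiates $K=-\partial_\beta q/q$. As you note yourself, this is already an average of $\partial_\alpha^2z^\circ$, so the term involves $\partial_\alpha^{j+3}z^\circ$.
\item At $j=k-2$ this is $\partial_\alpha^{k+1}z^\circ$, which is not available. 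Your condition $j+2\le k$ is off by one, and the remedy in your last paragraph (put $\partial_\alpha^kz^\circ$ in $L^2$ and $g$ in $L^\infty$) does not touch this term.
\item If you instead keep the kernel as $(\partial_\alpha z^\circ(\beta)-q)/((\alpha-\beta)q)$, the top-order term becomes a difference quotient of $\partial_\alpha^{k}z^\circ$ against $1/s$. That is a singular bilinear expression which Cauchy--Schwarz does not control.
\end{itemize}

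The missing ingredient is a mechanism that moves derivatives from the kernel onto $g$. The paper first proves $R:L^2\to H^1$ by checking $K,\partial_\alpha K\in L^2(\mathbb R^2)$, which uses only $\partial_\alpha^3z^\circ\in L^2$. It then inducts with the identity
\[
 \partial_\alpha R[g]=\partial_\alpha z^\circ R[\partial_\alpha g/\partial_\alpha z^\circ]-\tfrac i2\partial_\alpha z^\circ[H,1/\partial_\alpha z^\circ]\partial_\alpha g,
\]
obtained from \eqref{e:Bderivative} with $f=g\partial_\alpha z^\circ$. It combines this with the commutator bound $[H,\varphi]:H^j\to H^{j+1}$ for $\varphi-\varphi_\infty\in H^{k-1}$. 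In this way the curve enters only through multiplication by $\partial_\alpha z^\circ$ and $1/\partial_\alpha z^\circ$, whose fluctuations lie in $H^{k-1}$. For $P$, one takes real parts in
\[
 \partial_\alpha z^\circ B[f]=\partial_\alpha z^\circ R[f/\partial_\alpha z^\circ]-\tfrac i2\partial_\alpha z^\circ[H,1/\partial_\alpha z^\circ]f-\tfrac i2H[f].
\]
An integration by parts in $s$, converting $\alpha$-derivatives of the kernel into $s$-derivatives, might serve the same purpose, but nothing of the sort is in your plan.

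Your compactness argument also needs repair. The far-field bound $C|\alpha-\beta|^{-2}$, which comes from $\|z^\circ-(\alpha,0)\|_\infty$, does not become small on $\{|\alpha|>R\}$. Separately, the unweighted Schur test fails for the term $|\partial_\alpha z^\circ(\beta)-1|/|\alpha-\beta|$, because its $\alpha$-integral diverges logarithmically. If you use the numerator $|z^\circ(\alpha)-\alpha|+|z^\circ(\beta)-\beta|$, as the paper does, the kernel of $P$ lies in $L^2(\mathbb R^2)$. Then $P$ is Hilbert--Schmidt and hence compact directly, with no Rellich step.

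Finally, your first display has a sign/normalisation slip: it should read $2\pi i\,R[g]=2\pi i\,B[g\partial_\alpha z^\circ]-\pi H[g]$.
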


\begin{proof}
The kernel of $R[g]$ in \eqref{e:R} is 
\begin{equation}\label{e:qbeta}
 K(\alpha,\beta)=\frac1{2\pi i}\left(
 \frac{\partial_\beta z^\circ(\beta)}{z^\circ(\alpha)-z^\circ(\beta)}
 -\frac1{\alpha-\beta}\right)
 =-\frac1{2\pi i}\frac{\partial_\beta q}{q}.
\end{equation}
Both $K$ and $\partial_\alpha K$ belong to $L^2(\mathbb R^2)$.
Indeed, the segment formula \eqref{e:q} gives
\[
 \partial_\alpha q=\int_0^1 r\,\partial_\alpha^2z^\circ(\beta+r(\alpha-\beta))\,dr,\qquad
 \partial_\beta q=\int_0^1(1-r)\,\partial_\alpha^2z^\circ(\beta+r(\alpha-\beta))\,dr,
\]
\[
 \partial_{\alpha\beta}q=\int_0^1r(1-r)\,\partial_\alpha^3z^\circ(\beta+r(\alpha-\beta))\,dr.
\]
Moreover,
\[
 \partial_\alpha K=-\frac1{2\pi i}
 \left(\frac{\partial_{\alpha\beta}q}{q}
 -\frac{\partial_\alpha q\,\partial_\beta q}{q^2}\right).
\]
The first term requires the third derivative of $z^\circ$.
For the product term, bound one factor by
$\|\partial_\alpha^2z^\circ\|_\infty$ and the other by its segment
average. Since $q$ is bounded away from zero, for $|\alpha-\beta|<1$,
\[
 |K|+|\partial_\alpha K|
 \le C\int_0^1 (|\partial_\alpha^2z^\circ|+|\partial_\alpha^3z^\circ|)
 (\beta+r(\alpha-\beta))\,dr.
\]

By Cauchy--Schwarz in $r$ and the change of variables $t=\alpha-\beta$, $s=\beta+r(\alpha-\beta)$, which has unit Jacobian for each fixed $r$,
\[
 \iint_{|\alpha-\beta|<1}\bigl(|K|+|\partial_\alpha K|\bigr)^2\,d\alpha\,d\beta
 \le C\int_0^1\!\!\int_{|t|<1}\!\int_{\mathbb R}(|\partial_\alpha^2z^\circ|+|\partial_\alpha^3z^\circ|)^2(s)\,ds\,dt\,dr
 \le C\bigl(\|\partial_\alpha^2z^\circ\|_2^2+\|\partial_\alpha^3z^\circ\|_2^2\bigr).
\]
For $|\alpha-\beta|\ge1$, writing the numerators as in the proof of Lemma~\ref{l:offcurve} gives
\[
 |K|+|\partial_\alpha K|
 \le C\left(
 \frac{|\partial_\alpha z^\circ(\alpha)-1|+
       |\partial_\beta z^\circ(\beta)-1|}{|\alpha-\beta|}
 +\frac{|z^\circ(\alpha)-\alpha|+|z^\circ(\beta)-\beta|}
 {|\alpha-\beta|^2}\right).
\]
Since each numerator depends on one variable only, integrating first in the other one bounds the squared integral by $C(\|\partial_\alpha z^\circ-1\|_2^2+\|z^\circ-(\alpha,0)\|_2^2)$.
By Cauchy--Schwarz, $R$ and $\partial_\alpha R$ are bounded on $L^2$, that is, $R:L^2\to H^1$.
The decomposition
\[
 B[f]=R[f/\partial_\alpha z^\circ]-\tfrac i2H[f/\partial_\alpha z^\circ]
\]
gives $L^2$ boundedness of $B$.
Differentiating in $\alpha$ the trace from $\Omega_{\mathrm{left}}$ in \eqref{e:plemelj} and using
\eqref{e:complexderivative}, the terms $\partial_\alpha\bigl(f/(2\partial_\alpha z^\circ)\bigr)$ cancel
and we obtain \eqref{e:Bderivative}.
Induction and Lemma~\ref{l:products} then give boundedness on $H^j$, $j\le k-1$.

For the higher regularizing bounds we use
\[
 [H,\varphi]:H^j\longrightarrow H^{j+1},\qquad 0\le j\le k-2,
 \quad \varphi-\varphi_\infty\in H^{k-1}.
\]
Here $[H,\varphi]g=H[\varphi g]-\varphi H[g]$.
Its kernel is
\[
 L_\varphi(\alpha,\beta)
 =\frac{\varphi(\beta)-\varphi(\alpha)}{\pi(\alpha-\beta)}
 =-\frac1\pi\int_0^1\partial_\alpha\varphi
 (\beta+r(\alpha-\beta))\,dr.
\]
Arguing as for $K$, for $|\alpha-\beta|<1$, $L_\varphi$ and $\partial_\alpha L_\varphi$ are bounded
by segment averages of $|\partial_\alpha\varphi|$ and $|\partial_\alpha^2\varphi|$, while for
$|\alpha-\beta|\ge1$,
\[
\begin{aligned}
 |L_\varphi|&\le C\frac{|\varphi(\alpha)-\varphi_\infty|
 +|\varphi(\beta)-\varphi_\infty|}{|\alpha-\beta|},\\
 |\partial_\alpha L_\varphi|&\le C\frac{|\partial_\alpha\varphi(\alpha)|}
 {|\alpha-\beta|}
 +C\frac{|\varphi(\alpha)-\varphi_\infty|
 +|\varphi(\beta)-\varphi_\infty|}{|\alpha-\beta|^2}.
\end{aligned}
\]
These are square-integrable since $\varphi-\varphi_\infty$ and
$\partial_\alpha\varphi$ belong to $L^2$. This proves $[H,\varphi]:L^2\to H^1$.

Higher orders follow by induction from
\[
 \partial_\alpha[H,\varphi]g
 =[H,\varphi]\partial_\alpha g
 +H[\partial_\alpha \varphi\,g]-\partial_\alpha \varphi\,H[g]
\]
and Lemma~\ref{l:products}.

Differentiating \eqref{e:R}, using \eqref{e:Bderivative} with $f=g\partial_\alpha z^\circ$, which gives
$\partial_\alpha B[g\partial_\alpha z^\circ]=\partial_\alpha z^\circ B[\partial_\alpha g]$, writing
$B[\partial_\alpha g]=R[\partial_\alpha g/\partial_\alpha z^\circ]-\tfrac i2H[\partial_\alpha g/\partial_\alpha z^\circ]$,
and combining the Hilbert-transform terms into a commutator, we obtain
\begin{equation}\label{e:Rderivative}
 \partial_\alpha R[g]
 =\partial_\alpha z^\circ R[\partial_\alpha g/\partial_\alpha z^\circ]
 -\tfrac i2\partial_\alpha z^\circ
 [H,1/\partial_\alpha z^\circ]\partial_\alpha g.
\end{equation}
For $g\in H^j$, the input $(\partial_\alpha g)/\partial_\alpha z^\circ$
belongs to $H^{j-1}$. The induction hypothesis for $R$ and the
commutator bound with $\varphi=1/\partial_\alpha z^\circ$, $\varphi_\infty=1$, therefore put both terms on the right in $H^j$.
Starting with $R:L^2\to H^1$, this proves $R:H^j\to H^{j+1}$
for $j\le k-2$.

The real kernel of $P$ is given by \eqref{e:Pkernel}.
By symmetry of $q$ and the preceding bounds for $K$, this kernel
is square-integrable, so $P$ is Hilbert--Schmidt and hence compact.
Finally,
\[
 \partial_\alpha z^\circ B[f]
 =\partial_\alpha z^\circ R[f/\partial_\alpha z^\circ]
 -\tfrac i2\partial_\alpha z^\circ[H,1/\partial_\alpha z^\circ]f
 -\tfrac i2H[f].
\]
The last term has zero real part, proving the gain of one derivative
for $P$. Transposing its real kernel gives \eqref{e:adjoint}, and
$P^\dagger[g]=2\operatorname{Re}R[g]$ for real $g$, which proves
its regularizing bounds.
\end{proof}

\subsection{Saffman--Taylor inversion}\label{section8.3}

We prove Proposition~\ref{p:inverse} by the classical harmonic-energy
argument; cf.\ \cite[Section~4]{CCG11}.

\begin{proof}[Proof of Proposition~\ref{p:inverse}]
The bounds for $Q_0,Q_0^*$ and $P$ follow from
Proposition~\ref{p:sobolev} and \eqref{e:operators}.
By \eqref{e:constant},
\[
 B[1]=B[1-\partial_\alpha z^\circ]\in H^{k-1},
\]
which also gives $Q_0[1],P[1]\in H^{k-1}$.

Since $P$ is compact, $I-AP$ is Fredholm of index zero on real $L^2$. Hence, for $A\ne0$,
it is invertible if and only if $I-AP^\dagger$ is injective, that is, if $P^\dagger g=\mu g$ with
$\mu=1/A$ has no solution $g\ne0$. Note that $|\mu|>1$, since $|A|<1$.
Let $g$ be such a solution. Since $g=AP^\dagger g$ and $P^\dagger$ gains one derivative,
we get $g\in H^{k-1}$. Define
\[
 \Phi(x)=\operatorname{Re}B[g\partial_\alpha z^\circ](x),\qquad x\notin z^\circ(\mathbb R),
\]
which is harmonic in each domain and bounded by Lemma~\ref{l:offcurve}.
On the curve, \eqref{e:adjoint} gives
$\operatorname{Re}B[g\partial_\alpha z^\circ](\alpha)=\tfrac12P^\dagger[g](\alpha)=\tfrac\mu2g(\alpha)$.
Hence \eqref{e:plemelj} with $f=g\partial_\alpha z^\circ$ yields the boundary traces
\begin{equation}\label{e:eigentraces}
 \Phi(x)\longrightarrow
 \begin{cases}
 \dfrac{\mu-1}{2}g(\alpha),
 &x\to z^\circ(\alpha),\quad x\in\Omega_{\mathrm{left}},\\[4pt]
 \dfrac{\mu+1}{2}g(\alpha),
 &x\to z^\circ(\alpha),\quad x\in\Omega_{\mathrm{right}}.
 \end{cases}
\end{equation}
By \eqref{e:complexderivative} with $f=g\partial_\alpha z^\circ$, the complex derivative of
$B[g\partial_\alpha z^\circ]$ is $B[\partial_\alpha g]$. By \eqref{e:plemelj}, its left trace minus
its right trace is $-\partial_\alpha g/\partial_\alpha z^\circ$.
Thus the jump of $\nabla\Phi\cdot(\partial_\alpha z^\circ)^\perp$
is $-\operatorname{Re}(i\partial_\alpha g)=0$.
This common normal trace belongs to $L^2$ by
\eqref{e:plemelj} and Proposition~\ref{p:sobolev}.
Green's formula gives
\begin{equation}\label{e:energies}
\begin{aligned}
 \int_{\Omega_{\mathrm{left}}}|\nabla\Phi|^2
 &=-\frac{\mu-1}{2}\int_{\mathbb R}
 g\,\nabla\Phi\cdot(\partial_\alpha z^\circ)^\perp\,d\alpha,\\
 \int_{\Omega_{\mathrm{right}}}|\nabla\Phi|^2
 &=\frac{\mu+1}{2}\int_{\mathbb R}
 g\,\nabla\Phi\cdot(\partial_\alpha z^\circ)^\perp\,d\alpha.
\end{aligned}
\end{equation}

For completeness, on these unbounded domains one first tests with
$\chi_R^2\Phi$, where $\chi_R=1$ on $B_R$, vanishes outside $B_{2R}$,
and $|\nabla\chi_R|\le C/R$.
The boundary product is integrable, and boundedness of $\Phi$ gives
$\int|\nabla\chi_R|^2\Phi^2\le C\|\Phi\|_\infty^2R^{-2}|B_{2R}|\le C$.
Young's inequality first yields finite Dirichlet energy. The remaining
cutoff term then vanishes by Cauchy--Schwarz and the vanishing energy
in $B_{2R}\setminus B_R$. This justifies \eqref{e:energies}.

For $|\mu|>1$, the coefficients of the common boundary integral in \eqref{e:energies} have
opposite signs. Since both energies are nonnegative, the common integral vanishes, and so do
both energies. Thus $\Phi$ is constant in each domain, and \eqref{e:eigentraces} shows that $g$
is constant. Since $g\in L^2(\mathbb R)$, we get $g=0$.
Fredholm therefore gives a bounded inverse on $L^2$. The case $A=0$
is immediate. Finally, if $f\in H^j$ and $g=(I-AP)^{-1}f$, the identity
$g=APg+f$ and the regularization of $P$ give successively
\[
 \|g\|_{H^j}\le C\|g\|_{H^{j-1}}+\|f\|_{H^j},
 \qquad 1\le j\le k-1.
\]
This proves the higher Sobolev inverse bounds.
\end{proof}

\subsection{Estimates for the pressure operator}
\label{sec:movingpressure}

We now compare pressure traces as the boundaries approach each other.
The use of the argument principle and transverse integration is related
to \cite[Lemmas~4.1--4.3]{CFM22}.

Let
\[
    z_\lambda(t,\alpha)
    =
    z(t,\alpha)+\frac{\lambda+A}{1-A^2}
    tc(\alpha)\tau^\perp(\alpha),
    \qquad -1\le\lambda\le1,
\]
with $z(0)=z^\circ$, as in Section~\ref{section4}.
Assume the geometric bounds of Lemma~\ref{lem:mixing-geometry}
uniformly for $0<t\le T$, and
\[
    z-(\alpha,0)\in C^2([0,T];H^3),
    \qquad
    c\tau^\perp-ic_\infty\in H^3,
    \qquad c_\infty>0,
    \qquad \inf c>0.
\]
For $b=\pm1$, write
\begin{equation}\label{e:Qmoving}
    Q_{\lambda,b}^*[f](t,\alpha)
    =
    \frac1{2\pi}\operatorname{pv}\int_{\mathbb R}
    \frac{f(t,\beta)}
    {z_\lambda(t,\alpha)-z_b(t,\beta)}\,d\beta.
\end{equation}
This is precisely $iB[f,z_b(t)]$ evaluated at
$z_\lambda(t,\alpha)$, using the principal-value average
when $\lambda=b$.

\begin{lemma}
\label{l:uniformpressure}
Uniformly for $0<t\le T$ and $-1\le\lambda\le1$, if $h\in H^1$,
\begin{equation}\label{e:uniformpressure}
    \|Q_{\lambda,b}^*[h\partial_\alpha z_b](t)\|_\infty
    \le C\|h\|_{H^1}.
\end{equation}
If $f-f_\infty\in C^1([0,T];H^2)$
for a time-independent constant $f_\infty$, then
\begin{equation}\label{e:pressurecomparison}
    \sup_{\lambda\in[-1,1]}
    \left\|
        Q_{\lambda,b}^*[f](t)-Q_0^*[f(0)]
        +\frac{i\operatorname{sgn}(\lambda-b)f(0)}
        {2\partial_\alpha z^\circ}
    \right\|_\infty
    \le Ct,
\end{equation}
where $\operatorname{sgn}(0)=0$.
If instead $f-f_\infty\in C([0,T];H^1)$, the left-hand side of \eqref{e:pressurecomparison} still tends to zero as $t\to0$, but without rate.
\end{lemma}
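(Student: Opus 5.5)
The plan is to reduce everything to the fixed-curve estimates of Lemma~\ref{l:offcurve}, applied to the moving boundary $z_b(t)$, together with a transverse integration of the holomorphic Cauchy integral across the thin strip. The strip has width $O(t)$.

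\emph{Uniform bound.} Note that $Q^*_{\lambda,b}[h\partial_\alpha z_b](t,\alpha)=iB[h\partial_\alpha z_b,z_b(t)](z_\lambda(t,\alpha))$. Write $z_b(t)=\tilde z_b(t)+\ell_b(t)$ with $\ell_b(t)=\frac{b\,i c_\infty t}{1-bA}$. The standing assumptions give $\|\tilde z_b(t)-(\alpha,0)\|_{H^3}\le M$ uniformly in $t\in(0,T]$, and \eqref{eq:mixing-chord-arc} gives a uniform chord--arc constant. Then:
\begin{itemize}
\item for $\lambda\neq b$, the point $z_\lambda(t,\alpha)$ lies off $z_b(t)(\mathbb R)$, so the off-curve bound \eqref{e:offbound} applies;
\item for $\lambda=b$, the principal-value bound of Lemma~\ref{l:offcurve} applies.
\end{itemize}
By the translated-family clause of that lemma, the constant depends only on $(c_0,M)$. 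This proves \eqref{e:uniformpressure}.

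\emph{Comparison estimate.} Split $f=(f/\partial_\alpha z_b)\,\partial_\alpha z_b=f_\infty\partial_\alpha z_b+h\,\partial_\alpha z_b$, where $h:=f/\partial_\alpha z_b-f_\infty$. By Lemma~\ref{l:products} and the curve bounds, $h\in C^1([0,T];H^2)$.

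For the constant part, \eqref{e:constant} applied to $z_b(t)$ evaluates $Q^*_{\lambda,b}[f_\infty\partial_\alpha z_b]$ exactly: it equals $\pm\frac i2 f_\infty$ or $0$, according to the side of $z_b$ on which $z_\lambda$ lies. I will check via \eqref{eq:mixing-boundaries} and the orientation that this side is encoded by $\operatorname{sgn}(\lambda-b)$. This part then matches the corresponding $Q_0^*$ and $\operatorname{sgn}$ contributions at $t=0$, up to $O(t)$ coming from Lipschitz dependence of $f$, $\partial_\alpha z_b$ on $t$.

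For the $h$-part I compare in three steps.
\begin{enumerate}
\item Replace $h(t)$ by $h(0)$. The error is controlled by \eqref{e:uniformpressure} and $\|h(t)-h(0)\|_{H^1}\le Ct$.
\item Move from $z_\lambda(t,\alpha)$ to the boundary point $z_b(t,\alpha)$ along the straight segment in direction $\tau^\perp$, of length $\le Ct$. Since $F(x)=B[h\partial_\alpha z_b,z_b](x)$ is holomorphic off $z_b$, \eqref{e:complexderivative} gives $F'(x)=B[\partial_\alpha h\,\partial_\alpha z_b,z_b](x)$. This is bounded by $C\|\partial_\alpha h\|_{H^1}$ via \eqref{e:offbound}, using $h\in H^2$. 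Integrating along the segment, which stays on one side of $z_b$ by \eqref{eq:mixing-chord-arc}, gives the one-sided trace plus $O(t)$. By \eqref{e:plemelj}, that trace is the principal value plus $\operatorname{sgn}(\lambda-b)\,h/2$ times the appropriate factor; this is exactly the source of the $\operatorname{sgn}$ correction.
\item Compare the principal values on $z_b(t)$ and on $z^\circ$ at the same parameter $\alpha$. I would differentiate in $s$ along the family $z_b(s)$, using the analogue of \eqref{eq:pressure-operator-time-derivative} for $\lambda=b$. The derivative is a sum of terms $\partial_s z_b\,B[\partial_\alpha h\,\partial_\alpha z_b]$ and $B[\partial_\alpha(\partial_s z_b\,h)]$. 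The constant (translational) part of $\partial_s z_b$ cancels between these two terms, and the rest is bounded in $L^\infty$ by Lemma~\ref{l:offcurve}, since $\partial_s z_b-\text{const}\in H^3$ and $h\in H^2$. Integration in $s$ then gives $O(t)$.
\end{enumerate}

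\emph{Weak version.} If only $f-f_\infty\in C([0,T];H^1)$, approximate $h$ uniformly in time in $H^1$ by $C^1([0,T];H^2)$ functions. Apply the rate estimate to the approximants and control the remainder uniformly by \eqref{e:uniformpressure} and the fixed-curve bound; this yields convergence to zero without a rate.

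\emph{Main obstacle.} I expect the hardest part to be step 2: keeping the derivative bound uniform on the transverse segment as $\lambda\to b$ and $t\to0$, and identifying the side correctly. Here the point approaches the curve at distance $\sim t|\lambda-b|$, so I rely on \eqref{e:offbound} being uniform in the distance, which its proof provides. I also need the segment in direction $\tau^\perp$ not to cross $z_b$, which should follow from \eqref{eq:mixing-transversality}--\eqref{eq:mixing-chord-arc}.
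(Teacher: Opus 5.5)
Your proposal is correct and is essentially the paper's proof. The ingredients match: the uniform bound via the translated-family clause of Lemma~\ref{l:offcurve}; integration along $\mu\mapsto z_\mu(t,\alpha)$ starting from the one-sided Plemelj trace (the paper's identity \eqref{e:transversedifferentiation}); time differentiation of the self-interaction via \eqref{eq:pressure-operator-time-derivative}; and density for the rate-free case. Your freezing of $h(t)\to h(0)$ and exact evaluation of the $f_\infty\partial_\alpha z_b$ part by \eqref{e:constant} (exact, not just up to $O(t)$) simply make the paper's separate jump-coefficient step automatic.

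There are two bookkeeping slips, neither affecting the bounds. First, by \eqref{e:complexderivative} the complex derivative of $B[h\partial_\alpha z_b,z_b]$ is $B[\partial_\alpha h,z_b]$, not $B[\partial_\alpha h\,\partial_\alpha z_b,z_b]$. Second, in step~3 the net derivative of the self-interaction with density $h(0)\partial_\alpha z_b(s)$ is $i\bigl(\partial_s z_b\,B[\partial_\alpha h(0)]-B[\partial_s z_b\,\partial_\alpha h(0)]\bigr)$. This is the form in which your cancellation of the constant part of $\partial_s z_b$ actually holds, although that cancellation is not needed.
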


\begin{proof}
By \eqref{e:Qmoving} and \eqref{e:Boff} (or \eqref{e:Bon}),
\[
Q^*_{\lambda,b}[h\partial_\alpha z_b](t,\alpha)=iB[h\partial_\alpha z_b,z_b(t)](z_\lambda(t,\alpha)),
\]
evaluated off the curve $z_b(t)$ if $\lambda\ne b$ and as a principal value if $\lambda=b$.
The curves $z_b(t)$ have uniform chord--arc bounds, and
\[
 z_b(t)-(\alpha,0)-\frac{bic_\infty t}{1-bA}
 =z(t)-(\alpha,0)+\frac{bt}{1-bA}\bigl(c\tau^\perp-ic_\infty\bigr)
\]
is bounded in $H^3$ uniformly in $t\in(0,T]$. Hence Lemma~\ref{l:offcurve} for translated families applies to the source curve $z_b(t)$ with a constant independent of $t$,
which proves \eqref{e:uniformpressure}.

 In order to take advantage of \eqref{e:uniformpressure}, for a general density $f$, write $f=h\partial_\alpha z_b$, that is,
$h=f/\partial_\alpha z_b$. If $f\in H^1$, Lemma~\ref{l:products} gives $h\in H^1$.
If $f-f_\infty\in H^1$,
\[
    h-f_\infty
    =\frac{f-f_\infty}{\partial_\alpha z_b}
    +f_\infty\,\frac{1-\partial_\alpha z_b}{\partial_\alpha z_b},
\]
then $h-f_\infty\in H^1$ by Lemma~\ref{l:products} for the first term in the right hand side and \eqref{e:constant} for the constant part.

Now, we prove \eqref{e:pressurecomparison} in three steps. First, we compare $Q^*_{\lambda,b}[f](t)$ with its boundary value on $z_b(t)$ from the side containing $z_\lambda(t)$. Then, the self-interaction $Q^*_{b,b}[f](t)$ with $Q^*_{b,b}[f](0)=Q_0^*[f(0)]$, and lastly jump coefficients at times $t$ and $0$.

The boundary limit on the source curve from the side containing $z_\lambda$ is
\[
    Q_{\lambda,b}^*[f]
    \longrightarrow
    Q_{b,b}^*[f]
    -\frac{i\operatorname{sgn}(\lambda-b)f}
    {2\partial_\alpha z_b}.
\]

Differentiation in $\lambda$ and integration
by parts give
\begin{equation}\label{e:transversedifferentiation}
    \partial_\lambda Q_{\lambda,b}^*[f]
    =
    \frac{tc\tau^\perp}{1-A^2}
    Q_{\lambda,b}^*\left[
        \partial_\alpha\left(
            \frac f{\partial_\alpha z_b}
        \right)
    \right].
\end{equation}
The density on the right is uniformly in $H^1$,
so \eqref{e:uniformpressure} and Lemma~\ref{l:products}
bound the derivative by $Ct$.
Integrating this bound in $\lambda$, starting from the boundary limit above, we obtain
\[
    \left\|
        Q_{\lambda,b}^*[f]
        -Q_{b,b}^*[f]
        +\frac{i\operatorname{sgn}(\lambda-b)f}{2\partial_\alpha z_b}
    \right\|_\infty
    \le Ct,
\]
uniformly in $\lambda\in[-1,1]$.

For the self-interaction we use the temporal differentiation identity \eqref{eq:pressure-operator-time-derivative} with $a=b$,
\begin{equation}\label{e:timedifferentiation}
\partial_tQ_{b,b}^*[f]
    =
    Q_{b,b}^*[\partial_tf]
    +\partial_tz_b\,Q_{b,b}^*\left[
        \partial_\alpha\left(
            \frac f{\partial_\alpha z_b}
        \right)
    \right]
    -Q_{b,b}^*\left[
        \partial_\alpha\left(
            \frac{\partial_tz_b\,f}{\partial_\alpha z_b}
        \right)
    \right].
\end{equation}
Its right-hand side is uniformly bounded in $L^\infty$ by \eqref{e:uniformpressure}. Integrating in time gives
\[
    \|Q_{b,b}^*[f](t)-Q_0^*[f(0)]\|_\infty\le Ct.
\]
Finally, for the jump coefficients we write
\[
    \frac{f(t)}{\partial_\alpha z_b(t)}-\frac{f(0)}{\partial_\alpha z^\circ}
    =\frac{f(t)-f(0)}{\partial_\alpha z_b(t)}
    +f(0)\,\frac{\partial_\alpha z^\circ-\partial_\alpha z_b(t)}
    {\partial_\alpha z_b(t)\,\partial_\alpha z^\circ}.
\]
Both terms are $O(t)$ in $L^\infty$ by the assumed regularity of $f$ and $z$
and the chord--arc condition. Combining the three estimates proves \eqref{e:pressurecomparison}.
The differentiation identities hold first for smooth data
and then by approximation with uniform bounds.

The rate in \eqref{e:pressurecomparison} comes from the differentiation
identities \eqref{e:transversedifferentiation} and \eqref{e:timedifferentiation},
which involve $\partial_\alpha(f/\partial_\alpha z_b)$ and $\partial_tf$ and hence
require one more derivative of $f$ in space and in time. For $H^1$ densities only
the qualitative statement survives, and we obtain it by density.

Let $f-f_\infty\in C([0,T];H^1)$ and $\varepsilon>0$. Choose $g$ with
$g-f_\infty\in H^2$ and $\|f(0)-g\|_{H^1}\le\varepsilon$, and regard $g$ as a
time-independent density. By \eqref{e:uniformpressure}, Lemma~\ref{l:products},
the corresponding bound for $Q_0^*$ at $t=0$, and the embedding
$H^1\subset L^\infty$, the expression in \eqref{e:pressurecomparison} changes by at most
$C\bigl(\|f(t)-f(0)\|_{H^1}+\|f(0)-g\|_{H^1}\bigr)$ when $f$ is replaced by $g$.
Applying \eqref{e:pressurecomparison} to $g$, its value is bounded by
\[
    C_g\,t+C\|f(t)-f(0)\|_{H^1}+C\varepsilon .
\]
Letting $t\to0$ and then $\varepsilon\to0$ proves the claim.
\end{proof}

\begin{lemma}
\label{l:initialderivative}
Under the assumptions of this subsection, let $a,b=\pm1$.
For a density $f$ with $f-f_\infty\in C([0,T];H^1)$ and $f_\infty$ a
time-independent constant, extend $Q_{a,b}^*[f]$ to $t=0$ by
\begin{equation}\label{e:extension}
    Q_{a,b}^*[f](0)
    =Q_0^*[f(0)]-\frac{i\operatorname{sgn}(a-b)\,f(0)}{2\partial_\alpha z^\circ},
\end{equation}
which is its limit as $t\to0^+$ by Lemma~\ref{l:uniformpressure}.
Moreover, if $f-f_\infty\in C^2([0,T];H^2)$, then
$Q_{a,b}^*[f]\in C^1([0,T];L^\infty(\mathbb R))$, and
\eqref{eq:pressure-operator-time-derivative} also holds at $t=0$,
with each term on the right-hand side extended as in \eqref{e:extension}.
\end{lemma}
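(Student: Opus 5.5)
The plan is to prove the two assertions separately: first the continuity of the extension \eqref{e:extension} at $t=0$, then the $C^1$ regularity and the validity of \eqref{eq:pressure-operator-time-derivative} up to $t=0$.

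\textbf{The extension.} This is essentially Lemma~\ref{l:uniformpressure} in its qualitative form. Take $\lambda=a$ in \eqref{e:pressurecomparison}; for $a=\pm1$ one has $\operatorname{sgn}(a-b)\in\{0,\pm2\}\cdot\frac12$ up to the stated convention. So $Q_{a,b}^*[f](t)$ converges in $L^\infty$ to the right-hand side of \eqref{e:extension} whenever $f-f_\infty\in C([0,T];H^1)$. For $t>0$, continuity in $t$ with values in $L^\infty$ follows from two facts: the curves $z_a,z_b$ depend smoothly on $t$, and the change of variables $f=h\partial_\alpha z_b$ together with the uniform bound \eqref{e:uniformpressure} makes $Q^*_{a,b}$ depend continuously on the density and on the curves. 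Hence $Q_{a,b}^*[f]\in C([0,T];L^\infty)$.

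\textbf{The derivative for $t>0$.} Now assume $f-f_\infty\in C^2([0,T];H^2)$. For $t>0$, identity \eqref{eq:pressure-operator-time-derivative} holds by differentiating under the integral and integrating by parts in $\beta$. This is done first for smooth data and then by approximation, as in the proof of Lemma~\ref{l:uniformpressure}. Each term on the right is of the form $\phi(t)\,Q^*_{a,b}[f_j]$, where:
\begin{itemize}
\item $\phi\in\{1,\partial_tz_a\}$;
\item $f_j$ is one of $\partial_tf$, $\partial_\alpha(f/\partial_\alpha z_b)$, $\partial_\alpha(\partial_tz_bf/\partial_\alpha z_b)$.
\end{itemize}
By the hypotheses and Lemma~\ref{l:products}, each $f_j$ minus a time-independent constant lies in $C([0,T];H^1)$. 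The derivative terms have vanishing constant part; for $\partial_tf$ the constant is $0$ since $f_\infty$ is constant.

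\textbf{Passing to $t=0$.} By the first part, each term $\phi\,Q^*_{a,b}[f_j]$ extends continuously to $[0,T]$ in $L^\infty$, with limit given by \eqref{e:extension} applied to $f_j(0)$. Thus $\partial_tQ^*_{a,b}[f]$ has a continuous extension $F\in C([0,T];L^\infty)$. Writing
\[
Q^*_{a,b}[f](t)-Q^*_{a,b}[f](s)=\int_s^tF\,dr,\qquad 0<s<t,
\]
and letting $s\to0$ using the continuity of the extension gives the same identity with $s=0$. This shows that the right derivative at $0$ exists and equals $F(0)$, so $Q^*_{a,b}[f]\in C^1([0,T];L^\infty)$ and \eqref{eq:pressure-operator-time-derivative} holds at $t=0$ with the extended terms.

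\textbf{Main obstacle.} The delicate point is the justification of \eqref{eq:pressure-operator-time-derivative} for $t>0$ with only Sobolev data, with bounds uniform in $t$. Near the singularity, and when $a\ne b$ with the curves at distance $\sim t$, I would use the $h$-substitution and Lemma~\ref{l:offcurve} for translated families to obtain $t$-independent constants. I would then pass to the limit from smooth approximants, using the uniform estimate \eqref{e:uniformpressure}.
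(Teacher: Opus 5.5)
Your proposal is correct and is essentially the paper's argument: the three densities on the right of \eqref{eq:pressure-operator-time-derivative} lie in $C([0,T];H^1)$ with zero constant part, the qualitative part of Lemma~\ref{l:uniformpressure} (with $\lambda=a$) gives the $t\to0^+$ limit of each term, and integrating the identity in time identifies this limit as the right derivative of the extension at $t=0$. The one loose step is your claim that \eqref{e:uniformpressure} gives continuity with respect to the curves for $t>0$, since it only gives continuity in the density; this is repaired by the same approximation-by-$H^2$-densities argument used at the end of Lemma~\ref{l:uniformpressure}, and the paper leaves this step implicit as well.
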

\begin{proof}
The densities
\[
    \partial_tf,
    \qquad
    \partial_\alpha\left(\frac f{\partial_\alpha z_b}\right),
    \qquad
    \partial_\alpha\left(\frac{\partial_tz_b\,f}{\partial_\alpha z_b}\right)
\]
belong to $C^1([0,T];H^1)$ by Lemma~\ref{l:products}, with zero constant part.
The last assertion of Lemma~\ref{l:uniformpressure}
gives the limits as $t\to0^+$ of each term on the right-hand side of
\eqref{eq:pressure-operator-time-derivative},
and $\partial_tz_a(t)\to\partial_tz_a(0)$ in $L^\infty$.
Hence $\partial_tQ_{a,b}^*[f](t)$ converges
in $L^\infty$ to the
right-hand side of \eqref{eq:pressure-operator-time-derivative} at $t=0$.
Integrating \eqref{eq:pressure-operator-time-derivative} over $[0,t]$ identifies this limit as the right derivative of the extension at $t=0$.
\end{proof}

\begin{lemma}
\label{l:temporalprimitive}
Let $f_b$ be real densities with
$f_b-f_{b,\infty}\in C^2([0,T];H^2)$,
where $f_{b,\infty}$ is independent of time.
Define
\begin{equation}\label{e:correctedtraces}
    \widetilde Q_a^*
    =
    \sum_{b=\pm1}Q_{a,b}^*[f_b]
    +\frac{aif_{-a}}{2\partial_\alpha z_a},
    \qquad
    T_a=\widetilde Q_a\cdot(\partial_\alpha z_a)^\perp.
\end{equation}
Then, for $0<t\le T$,
\begin{equation}\label{e:temporalprimitive}
    \sup_{\alpha\in\mathbb R}
    \left|
        \int_0^\alpha
        \bigl(
            \partial_tT_a(t,\beta)-\partial_tT_a(0,\beta)
        \bigr)\,d\beta
    \right|
    \le Ct.
\end{equation}
The value at zero is the right derivative from
Lemma~\ref{l:initialderivative}.
The constant depends on the stated curve and density norms,
their constant parts and the uniform geometric bounds.
\end{lemma}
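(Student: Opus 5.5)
We need to bound spatial primitives of $\partial_tT_a(t)-\partial_tT_a(0)$ uniformly in $\alpha$ by $Ct$. A pointwise $L^\infty$ bound is not enough, since integrating over $[0,\alpha]$ with $\alpha$ unbounded loses control. The plan is therefore to exhibit $\partial_tT_a(t)-\partial_tT_a(0)$ as a sum of two kinds of terms. The first kind consists of $L^1$ functions of size $O(t)$, handled by Cauchy--Schwarz on products of two $H^1$ factors. The second kind consists of terms of the form $P$-type operators applied to $L^2$ densities of size $O(t)$, handled by the phase-primitive argument of Lemma~\ref{l:initialprimitive}.

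\textbf{Step 1: expand $\partial_tT_a$.} Write $T_a=\operatorname{Re}\bigl(i\,\widetilde Q_a^*\,\partial_\alpha z_a\bigr)$ up to sign conventions, so that
\[
\partial_tT_a=\operatorname{Re}\bigl(i\,\partial_t\widetilde Q_a^*\,\partial_\alpha z_a\bigr)+\operatorname{Re}\bigl(i\,\widetilde Q_a^*\,\partial_\alpha\partial_t z_a\bigr).
\]
Use \eqref{eq:pressure-operator-time-derivative} (valid at $t=0$ by Lemma~\ref{l:initialderivative}) for $\partial_tQ_{a,b}^*[f_b]$. This expresses $\partial_t\widetilde Q_a^*$ as operators $Q_{a,b}^*$ applied to the densities $\partial_tf_b$, $\partial_\alpha(f_b/\partial_\alpha z_b)$ and $\partial_\alpha(\partial_tz_bf_b/\partial_\alpha z_b)$, multiplied by $\partial_tz_a$, plus the time derivative of the local correction. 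The key structural observation is that each operator term, after multiplication by $\partial_\alpha z_a$, has the form $\partial_\alpha z_a\,B[h\,\partial_\alpha z_b,z_b]$ evaluated at $z_a$, with $h$ a density (possibly with a constant part).

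\textbf{Step 2: separate constant parts.} Constant parts at infinity are time independent and, by \eqref{e:constant} for curves of the family, produce explicit local terms. These should cancel against the local correction $aif_{-a}/(2\partial_\alpha z_a)$ and its derivatives, exactly as in \eqref{eq:pressure-operator-initial-limit}. After this reduction, what remains is:
\begin{itemize}
\item[(i)] local products, such as $\partial_t(f_{-a}/\partial_\alpha z_a)\cdot\partial_\alpha z_a^\perp$ and $\widetilde Q_a\cdot\partial_\alpha\partial_tz_a^\perp$ minus their values at $t=0$;
\item[(ii)] nonlocal terms $\operatorname{Re}\bigl(i\partial_\alpha z_a\,Q_{a,b}^*[\varphi](t)\bigr)-\operatorname{Re}\bigl(i\partial_\alpha z^\circ\,Q_0^*[\varphi(0)]\bigr)$ with $\varphi\in C^1([0,T];H^1)$ having zero constant part.
\end{itemize}

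\textbf{Step 3: local terms.} Every local term that carries a derivative hitting a decaying quantity (for instance $\partial_\alpha\partial_tz_a$, or $\partial_\alpha(f-f_\infty)$) is a product of an $L^2$ function with an $L^2$ function, or of an $L^1$ function with a bounded coefficient. Its difference between times $t$ and $0$ is therefore $O(t)$ in $L^1$ by the $C^2$ time regularity. This follows the same pattern as the geometric term in the proof of Proposition~\ref{p:transverse}.

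\textbf{Step 4: nonlocal terms (main obstacle).} The nonlocal terms of type (ii) are only $O(t)$ in $L^\infty$, not in $L^1$, so here we need the primitive structure. First, decompose the time-$t$ term into three pieces:
\begin{itemize}
\item[(a)] the same expression on the initial curve with density $\varphi(t)$;
\item[(b)] the self-interaction difference, moving the target from $z_a$ to $z_b$;
\item[(c)] the difference between the curve $z_b(t)$ and $z^\circ$.
\end{itemize}
For (a), Lemma~\ref{l:initialprimitive} applies to $\varphi(t)-\varphi(0)$, whose $L^2$ norm is $O(t)$, since the real part of $i\partial_\alpha z^\circ Q_0^*$ is $\pm\frac12 P$. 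The imaginary companion, if it appears, is handled using the adjoint formula \eqref{e:adjoint}.

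For (b) and (c), generalise the phase argument. The kernel $\operatorname{Re}\bigl(\partial_\alpha z_\lambda(\alpha)/(z_\lambda(\alpha)-z_b(\beta))\bigr)$ differs from the flat kernel by an exact $\alpha$-derivative of $\log$ of a difference quotient. Hence the spatial primitive equals $\int\varphi(\beta)\bigl(L_{t,\lambda}(\alpha,\beta)-L_{t,\lambda}(0,\beta)\bigr)\,d\beta$, where $L$ is the imaginary part of a continuous logarithm of $(z_\lambda(t,\alpha)-z_b(t,\beta))/(\alpha-\beta)$. This is where the argument principle enters, as in \cite{CFM22}, to define this logarithm continuously across $\lambda$ and $t$. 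I expect the main work to be showing that $L$, $\partial_tL$ and $\partial_\lambda L$ satisfy the bounds \eqref{e:phasebounds} uniformly in $t$ and $\lambda$, namely $\sup_\alpha\|\cdot\|_{L^2_\beta}\le C$. Since the chord--arc bound of Lemma~\ref{lem:mixing-geometry} degenerates only transversally, at scale $t$, I would control the transverse contribution by integrating \eqref{e:transversedifferentiation} in $\lambda$, which yields the needed factor $t$. Combining (a)--(c) with Cauchy--Schwarz then gives \eqref{e:temporalprimitive}.
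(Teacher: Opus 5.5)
There is a genuine gap. Your decomposition separates terms whose cancellation is the heart of the matter, and several of the isolated pieces do not have bounded primitives.

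First, since $T_a=-\operatorname{Im}(\partial_\alpha z_a\widetilde Q_a^*)$, the local correction contributes exactly $-\tfrac a2\bigl(\partial_tf_{-a}(t)-\partial_tf_{-a}(0)\bigr)$. This is $O(t)$ in $H^1$ but not in $L^1$, and it is not a product of two $L^2$ factors, so Step~3 cannot bound its primitive. It is harmless only when kept together with the cross term $-2\operatorname{Im}(\partial_\alpha z_aQ^*_{a,-a}[\partial_tf_{-a}])$, whose Plemelj part is $+a\,\partial_tf_{-a}$ (at $t=0$ this term equals $P[\partial_tf_{-a}(0)]+a\,\partial_tf_{-a}(0)$).

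Second, the product-rule term $\operatorname{Im}(\partial_t\partial_\alpha z_a\,\widetilde Q_a^*)$ is only $O(t)$ in $L^2$ with the $L^\infty$ estimates available. The terms $\partial_tz_a\,\partial_\alpha z_a\,Q^*_{a,b}[\partial_\alpha(f_b/\partial_\alpha z_b)]$ carry the nonconstant complex factor $\partial_tz_a(\alpha)$, so they are not of your form (ii). They involve the real, Hilbert-transform part of the Cauchy integral; the adjoint formula \eqref{e:adjoint} does not help, since there the tangent sits inside the integral. Integrating these terms by parts just regenerates the product-rule term.

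Third, in Step~4(b), for $\lambda\ne b$ the quotient $(z_\lambda(t,\alpha)-z_b(t,\beta))/(\alpha-\beta)$ blows up on the diagonal and its argument jumps by $\pi$ there. So the continuous logarithm does not exist, and the jump produces $\pm\int_0^\alpha\varphi$, which is unbounded and is exactly what the local correction must cancel. Moreover, the uniform bounds you hope for fail. Near the diagonal, $\partial_\lambda L\approx t\kappa s/(s^2+t^2(\lambda-b)^2\kappa^2)$ up to sign, with $s\approx|\partial_\alpha z^\circ|(\alpha-\beta)$ and $\kappa=c/(1-A^2)$. Hence $\|\partial_\lambda L(\alpha,\cdot)\|_{L^2}\sim(t/|\lambda-b|)^{1/2}$, and Cauchy--Schwarz plus integration in $\lambda$ gives only $O(\sqrt t)$. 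For the self-interaction in your Step~4(c), the phase bounds on $\partial_tL$ do hold and that part would work.

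The missing idea is the exact-derivative structure behind the paper's proof. By \eqref{e:spatialdifferentiation}, $\partial_\alpha z_a\,Q^*_{a,b}[\partial_\alpha(g/\partial_\alpha z_b)]=\partial_\alpha Q^*_{a,b}[g]$. So in $\partial_t(\partial_\alpha z_a\,Q^*_{a,b}[f])$, everything except $\partial_\alpha z_a\,Q^*_{a,b}[\partial_tf]$ collapses into $\partial_\alpha\bigl(\partial_tz_a\,Q^*_{a,b}[f]-Q^*_{a,b}[(\partial_tz_b)f]\bigr)$.

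The paper builds the local correction into $P_t[h]=-2\operatorname{Im}(\partial_\alpha z_a\,Q^*_{a,b}[h])-\delta_{b,-a}ah$, so that $T_a=\frac12\sum_bP_{a,b,t}[f_b]$ and $P_0=P$. It then gets $\partial_t(P_t[f])=P_t[\partial_tf]+\partial_\alpha C_t[f]$, with $\|C_t[h]\|_\infty\le C\|h\|_{H^1}$ by \eqref{e:uniformpressure}. The rate $t$ comes from the $H^1$ regularity of the density through this $L^\infty$ bound, not from $L^2$ kernel bounds.

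Freezing $h$ gives $\sup_\alpha|\int_0^\alpha(P_t-P)[h]|\le Ct\|h\|_{H^1}$. The difference then splits into three pieces:
$P[\partial_tf(t)-\partial_tf(0)]$, handled by Lemma~\ref{l:initialprimitive} (your Step~4(a));
$(P_t-P)[\partial_tf(t)]$, handled by the frozen-density bound;
and the boundary values $\bigl[C_t[f(t)]-C_0[f(0)]\bigr]_0^\alpha$, bounded by \eqref{e:pressurecomparison}.

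The same mechanism in the transverse variable, $\partial_\lambda(\partial_\alpha z_\lambda\,Q^*_{\lambda,b}[f])=t\,\partial_\alpha(\nu\,Q^*_{\lambda,b}[f])$ with $\nu=c\tau^\perp/(1-A^2)$, is what would repair your Step~4(b). Once you use it, the phase function for moving curves becomes unnecessary.
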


\begin{proof}
We write $T_a$ in terms of a moving version $P_t$ of the operator $P$
in \eqref{e:operators}, with $P_0=P$. 

Fix $a,b\in\{\pm1\}$. For a real density $h$, set
\begin{equation}\label{e:NE}
 P_t[h]=-2\operatorname{Im}(\partial_\alpha z_a\,Q_{a,b}^*[h]) -\delta_{b,-a}a h,
\end{equation}
where $\delta_{b,-a}$ is the Kronecker delta. When the pair matters we
write $P_{a,b,t}$.
At $t=0$, $Q_{a,b}^*[h]$ is extended as in \eqref{e:extension}.
If $b=-a$, the Plemelj jump in \eqref{e:extension} contributes $ah$ to the first term of $P_0[h]$, and the local correction cancels it. Hence $P_0=P$ by
\eqref{e:operators}.
Moreover, $Q\cdot(\partial_\alpha z_a)^\perp
=-\operatorname{Im}(\partial_\alpha z_a\,Q^*)$, so
\eqref{e:correctedtraces} gives
\begin{equation}\label{e:Tsplit}
 T_a(t)=\frac12\sum_{b=\pm1}P_{a,b,t}[f_b(t)].
\end{equation}
Therefore it suffices to prove \eqref{e:temporalprimitive} with $T_a$
replaced by $P_t[f_b(t)]$, for each fixed pair $(a,b)$. Since $P$ has a bounded primitive by Lemma~\ref{l:initialprimitive}, the key point is the identity \eqref{e:conservation} below. The quantity $\partial_t(P_t[f(t)])$ equals $P_t[\partial_tf(t)]$ plus the $\alpha$-derivative of a bounded function $C_t[f(t)]$, so after integration in $\alpha$ this last term only contributes boundary values.

Let $0<t\le T$. Differentiating \eqref{e:Qmoving} and integrating by parts gives
\begin{equation}\label{e:spatialdifferentiation}
 \partial_\alpha Q_{a,b}^*[f]
 =\partial_\alpha z_a\,
 Q_{a,b}^*\left[\partial_\alpha\left(\frac f{\partial_\alpha z_b}\right)\right],
\end{equation}
in the principal-value sense when $a=b$. Using the product rule, we get 
\[
 \partial_t\bigl(\partial_\alpha z_a\,Q_{a,b}^*[f]\bigr)
 = (\partial_t \partial_\alpha z_a) Q^*_{a,b}[f]+\partial_\alpha z_a \partial_t Q^*_{a,b}[f].
\]
Introducing \eqref{eq:pressure-operator-time-derivative}, using \eqref{e:spatialdifferentiation} in its last two terms, and grouping the terms $(\partial_t\partial_\alpha z_a)\,Q^*_{a,b}[f]$ and $\partial_tz_a\,\partial_\alpha Q^*_{a,b}[f]$, we get
\[
 \partial_t\bigl(\partial_\alpha z_a\,Q_{a,b}^*[f]\bigr)
 =\partial_\alpha z_a\,Q_{a,b}^*[\partial_tf]
 +\partial_\alpha\bigl(\partial_tz_a\,Q_{a,b}^*[f]
 -Q_{a,b}^*[(\partial_tz_b)f]\bigr).
\]
Taking $-2\operatorname{Im}$ and subtracting $\delta_{b,-a}a\,\partial_tf$
from both sides gives
\begin{equation}\label{e:conservation}
 \partial_t(P_t[f(t)])
 =P_t[\partial_tf(t)]+\partial_\alpha C_t[f(t)],
 \qquad 0<t\le T,
\end{equation}
where $C_t[f]=C_{a,b,t}[f]=-2\operatorname{Im}\bigl( \partial_tz_a\,Q_{a,b}^*[f]-Q_{a,b}^*[(\partial_tz_b)f]\bigr)$. These identities hold first for smooth data, and then by approximation
with the uniform bounds of Lemma~\ref{l:uniformpressure}.

Let $h\in H^1$ be real and independent of time. Then \eqref{e:conservation}
reduces to $\partial_sP_s[h]=\partial_\alpha C_s[h]$ for $0<s\le t$.
Integrating over $[0,\alpha]\times[\varepsilon,t]$ gives
\[
 \int_0^\alpha(P_t[h]-P_\varepsilon[h])(\beta)\,d\beta
 =\left[\int_\varepsilon^t C_s[h]\,ds\right]_0^\alpha.
\]
As $\varepsilon\to0$, $P_\varepsilon[h]\to P_0[h]=P[h]$ in $L^\infty$ by the last assertion of Lemma~\ref{l:uniformpressure}. Hence the
left-hand side converges for each fixed $\alpha$.
On the right, $\partial_tz_b$ is bounded in $W^{1,\infty}$, so \eqref{e:uniformpressure} and
Lemma~\ref{l:products} give $\|C_s[h]\|_\infty\le C\|h\|_{H^1}$,
uniformly in $s$. Therefore
\begin{equation}\label{e:Nprimitive}
 \int_0^\alpha(P_t[h]-P[h])(\beta)\,d\beta
 =\left[\int_0^t C_s[h]\,ds\right]_0^\alpha,
 \qquad
 \sup_{\alpha\in\mathbb R}
 \left|\int_0^\alpha(P_t[h]-P[h])\,d\beta\right|
 \le Ct\|h\|_{H^1}.
\end{equation}

Fix $b$. For $0<t\le T$, integrating \eqref{e:conservation} over
$[0,\alpha]$ gives
\begin{equation}\label{e:integratedconservation}
 \int_0^\alpha\partial_t(P_t[f_b(t)])\,d\beta
 =\int_0^\alpha P_t[\partial_tf_b(t)]\,d\beta
 +\bigl[C_t[f_b(t)]\bigr]_0^\alpha.
\end{equation}
We let $t\to0$ with $\alpha$ fixed. By Lemma~\ref{l:initialderivative},
the integrand on the left converges in $L^\infty$ to the right derivative
at $t=0$. Since $\partial_tf_b(t)-\partial_tf_{b}(0)\to0$ in $H^1$, we have
$P_t[\partial_tf_b(t)]\to P[\partial_tf_b(0)]$ in $L^\infty$ by
\eqref{e:uniformpressure} for the change of density and the last
assertion of Lemma~\ref{l:uniformpressure} for the change of curves. Finally,
$C_t[f_b(t)]\to C_0[f_b(0)]$ by \eqref{e:pressurecomparison}.
Hence \eqref{e:integratedconservation} also holds at $t=0$:
\[
\int_0^\alpha\left.\partial_s(P_s[f_b(s)])\right|_{s=0}\,d\beta
 =\int_0^\alpha P[\partial_tf_b(0)]\,d\beta
 +\bigl[C_0[f_b(0)]\bigr]_0^\alpha.
\]
Subtracting this from \eqref{e:integratedconservation}, and splitting
\[
 P_t[\partial_tf_b(t)]-P[\partial_tf_b(0)]
 =P[\partial_tf_b(t)-\partial_tf_b(0)]
 +(P_t-P)[\partial_tf_b(t)],
\]
we obtain
\begin{equation}\label{e:lastbound}
\begin{aligned}
 \int_0^\alpha\left(
 \partial_t(P_t[f_b(t)])-\left.\partial_s(P_s[f_b(s)])\right|_{s=0}\right)\,&d\beta
 =\int_0^\alpha P[\partial_tf_b(t)-\partial_tf_b(0)]\,d\beta\\
 &+\int_0^\alpha(P_t-P)[\partial_tf_b(t)]\,d\beta
 +\bigl[C_t[f_b(t)]-C_0[f_b(0)]\bigr]_0^\alpha.
\end{aligned}
\end{equation}
We bound the three terms on the right uniformly in $\alpha$. By Lemma~\ref{l:initialprimitive}, the first term is bounded by
$C\|\partial_tf_b(t)-\partial_tf_b(0)\|_2\le Ct$, since
$f_b-f_{b,\infty}\in C_t^2H^2$. 

By \eqref{e:Nprimitive} with $h=\partial_tf_b(t)$, regarded as a time-independent density, the second term is at most $Ct\|\partial_tf_b(t)\|_{H^1}\le Ct$.

The third term is bounded by
$2\|C_t[f_b(t)]-C_0[f_b(0)]\|_\infty$. Note that
\[
 (\partial_tz_b)f_b-\frac{bic_\infty f_{b,\infty}}{1-bA}\in C_t^1H^2,
\]
with a time-independent constant part. Applying \eqref{e:pressurecomparison} to $f_b$ and to $(\partial_tz_b)f_b$, we get
$\|Q_{a,b}^*[f_b](t)-Q_{a,b}^*[f_b](0)\|_\infty\le Ct$ and
$\|Q_{a,b}^*[(\partial_tz_b)f_b](t)-Q_{a,b}^*[(\partial_tz_b)f_b](0)\|_\infty\le Ct$.
These bounds, which also give $\|Q_{a,b}^*[f_b](t)\|_\infty\le C$, together with
   $\|\partial_tz_a(t)-\partial_tz_a(0)\|_\infty\le Ct$, give
   $\|C_t[f_b(t)]-C_0[f_b(0)]\|_\infty\le Ct$.

Hence the left-hand side of \eqref{e:lastbound} is bounded by $Ct$
uniformly in $\alpha$. Summing over $b$ and using \eqref{e:Tsplit} proves \eqref{e:temporalprimitive}.
\end{proof}

\subsection{Estimates for finitely many layers}
\label{sec:layerestimates}

We record only the additions needed in Section~\ref{sec:multilayer}.
All constants below may depend on the fixed number and spacing
of the layers.

\begin{lemma}
\label{l:layerpressure}
Let $h_1<\cdots<h_{2N}$ be fixed, and set
\[
    Z(t,\alpha,\lambda)
    =
    z(t,\alpha)+tc(\alpha)\lambda\tau^\perp(\alpha),
    \qquad
    z_j=Z(t,\cdot,h_j).
\]
Assume $z-(\alpha,0)\in C_t^2H^3$,
$c\tau^\perp-ic_\infty\in H^3$,
$\inf c>0$, and the geometric bounds of
Lemma~\ref{lem:mixing-geometry}.
For real densities $f_j-f_{j,\infty}\in C_t^2H^2$
with constant $f_{j,\infty}$,
let $Q_j^*=\sum_{\ell=1}^{2N}Q_{j,\ell}^*[f_\ell]$, where $Q_{j,\ell}^*$ is defined as in \eqref{e:Qmoving} with $z_\lambda,z_b$ replaced by $z_j,z_\ell$, and define
\begin{equation}\label{e:layercorrected}
    \widetilde Q_j
    =
    Q_j-\frac{i}{2(\partial_\alpha z_j)^*}
    \sum_{\ell=1}^{2N}\operatorname{sgn}(j-\ell)f_\ell,
    \qquad
    T_j=\widetilde Q_j\cdot(\partial_\alpha z_j)^\perp.
\end{equation}
The pressure comparisons and initial temporal derivative estimates
of Lemmas~\ref{l:uniformpressure}--\ref{l:initialderivative}
hold for each pair of curves, with jump sign
$\operatorname{sgn}(\lambda-h_\ell)$.
Moreover,
$\widetilde Q_j(0)=Q_0[\sum_\ell f_\ell(0)]$ and
\begin{equation}\label{e:layerprimitive}
    \sup_\alpha
    \left|
        \int_0^\alpha
        \bigl(
            \partial_tT_j(t,\beta)-\partial_tT_j(0,\beta)
        \bigr)\,d\beta
    \right|
    \le Ct.
\end{equation}
\end{lemma}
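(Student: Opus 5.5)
The plan is to reduce Lemma~\ref{l:layerpressure} to the two-curve statements already proved, applying them pair by pair. The new ingredient is the bookkeeping of the Plemelj jumps, which now come from several source curves lying on both sides of each target curve.

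\textbf{Pairwise comparisons.} Fix a pair $(j,\ell)$. The family $\lambda\mapsto Z(t,\cdot,\lambda)$, $\lambda\in[h_1,h_{2N}]$, differs from the one in Section~\ref{sec:movingpressure} only through an affine reparametrization of the transverse variable. The source curve $z_\ell(t)$ obeys
\[
z_\ell(t)-(\alpha,0)-ih_\ell c_\infty t=z(t)-(\alpha,0)+th_\ell\bigl(c\tau^\perp-ic_\infty\bigr),
\]
which is bounded in $H^3$ uniformly in $t$, and the geometric bounds are assumed. Hence Lemma~\ref{l:offcurve} for translated families yields the uniform estimate \eqref{e:uniformpressure} for $Q^*_{j,\ell}$. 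Next I would repeat the proof of Lemma~\ref{l:uniformpressure}:
\begin{itemize}
\item the transverse differentiation \eqref{e:transversedifferentiation} now has the factor $tc\tau^\perp$ in place of $tc\tau^\perp/(1-A^2)$;
\item integration in $\lambda$ starts from the boundary value on $z_\ell$ taken from the side of $z_j$;
\item for the self-interaction one uses \eqref{e:timedifferentiation}.
\end{itemize}
This gives \eqref{e:pressurecomparison} with jump sign $\operatorname{sgn}(\lambda-h_\ell)$. Since the $h_j$ are ordered, $\operatorname{sgn}(h_j-h_\ell)=\operatorname{sgn}(j-\ell)$. Lemma~\ref{l:initialderivative} carries over verbatim for each pair.

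\textbf{Initial value.} Sum the pairwise limits at $t=0$. For each $\ell$,
\[
Q^*_{j,\ell}[f_\ell](0)=Q_0^*[f_\ell(0)]-\frac{i\operatorname{sgn}(j-\ell)f_\ell(0)}{2\partial_\alpha z^\circ}.
\]
Take complex conjugates, using $(i/w)^*=-i/w^*$. The jump terms add up to $+\frac{i}{2(\partial_\alpha z^\circ)^*}\sum_\ell\operatorname{sgn}(j-\ell)f_\ell(0)$, and the correction in \eqref{e:layercorrected} cancels this exactly. Therefore $\widetilde Q_j(0)=Q_0[\sum_\ell f_\ell(0)]$.

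\textbf{Primitive estimate.} This is the main step. I would follow Lemma~\ref{l:temporalprimitive}. For each pair define
\[
P_{j,\ell,t}[h]=-2\operatorname{Im}\bigl(\partial_\alpha z_j\,Q^*_{j,\ell}[h]\bigr)-\operatorname{sgn}(j-\ell)h.
\]
The real part of the local term is chosen so that $P_{j,\ell,0}=P$. Indeed, the Plemelj jump contributes $\operatorname{sgn}(j-\ell)h$ through $-2\operatorname{Im}$ of $-i\operatorname{sgn}(j-\ell)h/2$, and the correction removes it.

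Then $T_j=\frac12\sum_\ell P_{j,\ell,t}[f_\ell]$, and the correction terms match those in \eqref{e:layercorrected}. The conservation identity \eqref{e:conservation} is derived exactly as before, since it only uses \eqref{eq:pressure-operator-time-derivative} and \eqref{e:spatialdifferentiation} for the pair $(z_j,z_\ell)$. The local term is constant in time apart from $\partial_t f$, so it is absorbed into $P_t[\partial_t f]$.

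The three-term bound \eqref{e:lastbound} then follows. The first term uses Lemma~\ref{l:initialprimitive}. The second uses \eqref{e:Nprimitive}. The third uses the pairwise comparison applied to $f_\ell$ and to $(\partial_t z_\ell)f_\ell$. Summing over the finitely many $\ell$ gives \eqref{e:layerprimitive}.

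The delicate point I expect is the case $j=\ell$, together with pairs of curves that coincide at $t=0$. There one must check that the sign convention $\operatorname{sgn}(0)=0$ and the principal-value interpretation are consistent in \eqref{e:conservation} at $t=0$. I would handle this, as in the paper, by first treating smooth data and then passing to the limit using the uniform bounds.
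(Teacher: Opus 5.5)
Your proposal is correct and follows the paper's proof. Like the paper, you apply Lemmas~\ref{l:uniformpressure}--\ref{l:temporalprimitive} pair by pair, with three changes: the transverse factor becomes $tc\tau^\perp$, the Plemelj sign becomes $\operatorname{sgn}(j-\ell)$, and the local correction in $P_t$ becomes $-\operatorname{sgn}(j-\ell)h$. The one hypothesis the paper states explicitly and you leave implicit is $(\partial_t z_\ell)f_\ell-ih_\ell c_\infty f_{\ell,\infty}\in C^1_tH^2$ with a time-independent constant part. It follows from your formula for $z_\ell$, and it is what allows \eqref{e:pressurecomparison} to be applied to the second density in $C_t$.
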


\begin{proof}
Apply the proofs of Lemmas~\ref{l:uniformpressure}--\ref{l:temporalprimitive}
to each pair $(j,\ell)$. Transverse differentiation has the factor
$tc\tau^\perp$, Plemelj has the sign $\operatorname{sgn}(j-\ell)$,
and the local correction in $P_t$ is $-\operatorname{sgn}(j-\ell)h$.
The identities \eqref{eq:pressure-operator-time-derivative}, \eqref{e:conservation}
and \eqref{e:Nprimitive} are unchanged. Since $\partial_t z_\ell=\partial_t z+h_\ell(c\tau^\perp-ic_\infty)+ih_\ell c_\infty$, we have $(\partial_t z_\ell)f_\ell-ih_\ell c_\infty f_{\ell,\infty}\in C^1_tH^2$, which replaces $(\partial_tz_b)f_b-\frac{bic_\infty f_{b,\infty}}{1-bA}\in C^1_tH^2$ in the proof of Lemma~\ref{l:temporalprimitive}; thus \eqref{e:pressurecomparison} applies to both integrals defining $C_t$. Summing the pairwise estimates proves the assertions.
\end{proof}

\begin{lemma}
\label{l:layercompatibility}
For the phases and speeds in
\eqref{e:layerlevels}--\eqref{e:layercurves},
with $c$ given by the formula in \eqref{eq:opening-speed}
and $0<\bar c<1-1/(2N)$,
and with $z=z^\circ+tz^{(1)}+\frac{t^2}{2}z^{(2)}$,
$z^{(1)}=-Q_0[\sigma^{(0)}]$,
there exist a normal $z^{(2)}\in H^{k-2}$ and potentials $g_j$
for which \eqref{e:layerdensities}--\eqref{e:layerfields}
satisfy the linear system in distributions and the expansions
\eqref{e:layerasymptotics}.
\end{lemma}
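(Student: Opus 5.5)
The plan is to repeat the single-strip construction of Sections~\ref{section4}--\ref{section7} layer by layer, with Lemma~\ref{l:layerpressure} replacing Lemmas~\ref{l:uniformpressure}--\ref{l:temporalprimitive}.

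\emph{Step 1: the potentials and the linear system.} Between $z_j$ and $z_{j+1}$ set $G_j=g_j\circ Z$, with $Z$ parametrized by $h\in[h_j,h_{j+1}]$. Writing the phase-conservation jump across each interface $z_j$ with the momentum \eqref{e:layerfields}, and assuming continuity of the normal velocity, gives the analogue of \eqref{eq:potential-boundary-condition}. It prescribes the jump of the tangential derivative across $z_j$:
\[
F_A(\theta_j)\partial_\alpha G_j-F_A(\theta_{j-1})\partial_\alpha G_{j-1}.
\]
This jump is expressed through $\partial_t z_j$, the velocity traces, and $\tfrac12(A\bar u+i)$. The outermost potentials are absent, since $F_A(\pm1)=0$.

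The densities \eqref{e:layerdensities} are the analogue of \eqref{eq:boundary-densities}. Eliminating the potential jumps with Darcy's law \eqref{eq:velocity} in each strip gives normal-velocity continuity, exactly as in Section~\ref{sec:pressure}. The velocity traces on both sides of $z_j$ involve $\widetilde Q_j$ from \eqref{e:layercorrected} plus local terms. Hence every $\partial_\alpha G_j$ restricted to an interface is determined recursively from the pressure alone.

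Inside each strip, $G_j$ is defined by linear interpolation in $h$ between the traces on its two bounding interfaces, after normalizing at $\alpha=0$. As in Section~\ref{section4}, this gives the linear system in distributions.

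\emph{Step 2: first-order compatibility.} At $t=0$, Lemma~\ref{l:layerpressure} gives $\widetilde Q_j(0)=Q_0[\sigma^{(0)}]$ with $\sigma^{(0)}=\sum_\ell\sigma_\ell^{(0)}$. This total density satisfies the same equation \eqref{eq:first-order-density}, because $\sum_\ell(\theta_\ell-\theta_{\ell-1})=2$ and $\sum_\ell(\theta_\ell-\theta_{\ell-1})h_\ell=0$ make the total density \eqref{eq:sigma} unchanged at order zero. With $z^{(1)}=-Q_0[\sigma^{(0)}]$ the normal traces agree with $z^{(1)}\cdot\partial_\alpha z^{\circ\perp}$.

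The two traces of each strip potential then have equal tangential derivative at $t=0$ provided the speeds are Rankine--Hugoniot. This is precisely the role of the choice \eqref{e:layercurves}. Concretely, $F_A(\theta_j)\partial_\alpha G^{(0)}_j$ should be affine in $F_A(\theta_j)$, with the prefactor $N c/(2N-1)$ and the term $\tfrac14$ of $\sigma^{(0)}$ rescaled; I would verify this by direct computation.

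\emph{Step 3: second-order compatibility.} This is the main obstacle. For each of the $2N-1$ strips we need the first time derivative of the difference of its two boundary traces to vanish, which gives $2N-1$ conditions. The only unknowns are the single normal field $z^{(2)}$ and $\sigma^{(1)}$.

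My plan is to show that, modulo remainders already known at first order, each condition has the form
\[
\kappa_j\Bigl(z^{(2)}\cdot\partial_\alpha z^{\circ\perp}+\tfrac12P[\sigma^{(1)}]\Bigr)+r_j=0,
\]
with the same combination $z^{(2)}\cdot\partial_\alpha z^{\circ\perp}+\tfrac12P[\sigma^{(1)}]$ for every $j$. If this structure holds, the conditions cannot all be imposed with one $z^{(2)}$. I would then impose only the total condition, as in Lemma~\ref{lem:second-order-compatibility}, and absorb the individual mismatches $r_j$. I would try to do this by modifying the interpolation inside each strip: replace the linear interpolation by one whose $\partial_\lambda G_j$ equals $t^2$ times a fixed $H^{k-2}$ primitive. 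That is still $O(t^2)$ once the primitive is shown bounded via Lemma~\ref{l:initialprimitive}.

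The total condition together with \eqref{eq:total-density-derivative} yields $(I-AP)\sigma^{(1)}=\text{rhs}\in H^{k-2}$, which Proposition~\ref{p:inverse} inverts; this gives $z^{(2)}\in H^{k-2}$ as in Proposition~\ref{p:second}.

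\emph{Step 4: expansions.} The $O(t)$ and $O(t^2)$ bounds follow as in Propositions~\ref{p:tangent}--\ref{p:transverse}, with \eqref{e:layerprimitive} in place of Lemma~\ref{l:temporalprimitive}. The analogue of Proposition~\ref{p:nondegenerate} follows from the pressure comparison with signs $\operatorname{sgn}(\lambda-h_\ell)$. In it, the $c$-terms cancel exactly as before, because the Rankine--Hugoniot relation makes $\nabla g_j\cdot\tau$ match the pressure jump. This yields \eqref{e:layerasymptotics}.
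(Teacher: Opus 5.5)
Your proposal has a genuine gap. Steps~1 and~3 miscount the transmission conditions, and the remedy you propose in Step~3 cannot work. At an interior interface $z_j$ both neighbouring regions are mixing strips with their own potentials. Once $\sigma_j$ is fixed by \eqref{e:layerdensities}, normal-velocity continuity and the phase-conservation jump are dependent: they reduce to a single affine relation between $\partial_\alpha G_{j-1}|_{z_j}$ and $\partial_\alpha G_j|_{z_j}$. For $A\neq0$ the velocity traces also contain $AF_A(\theta)\partial_\alpha G/(1+AC(\theta))$ by \eqref{eq:velocity}, so they are not pure-pressure quantities either. Hence the individual traces are not determined, and there are not $2N-1$ per-strip conditions. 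With one transverse-independent tangential derivative per strip you have $2N-1$ unknowns against $2N$ interface relations, i.e.\ exactly one global compatibility condition, matching the single unknown $z^{(2)}$. Conversely, suppose both traces of $G_j$ really were prescribed and their $\alpha$-derivatives differed by $tr_j+O(t^2)$. Then for any interpolation, $\int_{h_j}^{h_{j+1}}\partial_\lambda G_j\,d\lambda$ equals the difference of the traces, of size $t\,|\int_0^\alpha r_j|$. So $\partial_\lambda G_j=O(t^2)$ is impossible, the physical normal derivative $t^{-1}\partial_\lambda G_j$ is $O(1)$, and changing the interpolation cannot absorb the mismatch.

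The missing idea is to use that freedom explicitly. The paper introduces auxiliary potentials $\phi_j$ with $\bar u=A\nabla^\perp\phi_j-(\nabla p+\theta_j i)/(1+A\theta_j)$ and $\bar m=\theta_j\bar u-(1+A\theta_j)\nabla^\perp\phi_j$. In these variables both conditions at $z_j$ become the pure jump condition $\partial_\alpha(\phi_j|_{z_j}-\phi_{j-1}|_{z_j})=R_j$, where $R_j$ depends only on the geometry and the pressure trace $Q_j$; this is the decoupling your Step~1 was aiming at. The paper then sets $\phi_j\circ Z=S_j-\frac{\lambda-h_1}{h_{2N}-h_1}S_{2N}$ with $S_j=\int_0^\alpha\sum_{\ell\le j}R_\ell$. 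This puts the individual residuals into the interior jumps, and the only transverse derivative, common to all strips, is a multiple of $S_{2N}$. One then needs only $\sup_\alpha|\int_0^\alpha\sum_jR_j|\le Ct^2$. This follows from three facts:
\begin{itemize}
\item $R(0)=0$, using $z^{(1)}$ and $\sum_j(\theta_j-\theta_{j-1})h_j=0$;
\item a normal $z^{(2)}$ chosen so that $\partial_tR(0)=r-\frac{2}{1-A^2}(I-AP)[z^{(2)}\cdot(\partial_\alpha z^\circ)^\perp]$ vanishes, where $r\in H^{k-2}$ is the value for $z^{(2)}=0$ (Proposition~\ref{p:inverse});
\item the primitive estimates of Lemma~\ref{l:layerpressure}.
\end{itemize}
The Rankine--Hugoniot speeds do not enter the compatibility. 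What they do is make $\sum_{\ell\le j}R_\ell(0)$ proportional to $F_A(\theta_j)/(1+A\theta_j)$, which yields \eqref{e:layerasymptotics}.

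It is $\phi_j$, not $g_j$, that must be nearly constant across the strip. The paper recovers $g_j$ afterwards from $\nabla^\perp g_j=\frac{1+A\theta_j}{F_A(\theta_j)}\nabla^\perp\phi_j-\frac12(A\bar u+i)$, and it generally has an $O(1)$ normal derivative. If instead $\nabla g_j\cdot\tau^\perp=O(t)$, as in your construction, then \eqref{e:layerfields} gives $(\bar m-\theta_j\bar u)\cdot\tau=-\frac12F_A(\theta_j)(A\bar u+i)\cdot\tau+O(t)$. This is generically nonzero; for $A=0$ it equals $-\frac12F_A(\theta_j)\tau_2+O(t)$. So the second expansion in \eqref{e:layerasymptotics} would fail even after the compatibility issue is repaired.
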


\begin{proof}
We first construct auxiliary potentials $\phi_j$ and the fields
\begin{equation}\label{e:layerauxiliary}
    \bar u
    =
    A\nabla^\perp\phi_j
    -\frac{\nabla p+\theta_j i}{1+A\theta_j},
    \qquad
    \bar m
    =
    \theta_j\bar u-(1+A\theta_j)\nabla^\perp\phi_j.
\end{equation}
Set $\phi_0=\phi_{2N}=0$ in the pure phases.
This parametrization makes the transmission conditions explicit;
we recover $g_j$ at the end.
Write $\delta_j=\theta_j-\theta_{j-1}$ and
\[
    \omega_j
    =
    \frac{\delta_j}
    {(1+A\theta_{j-1})(1+A\theta_j)},
    \qquad
    \sum_{j=1}^{2N}\omega_j=\frac{2}{1-A^2}.
\]
The required tangential jump of the potentials is
\begin{equation}\label{e:layerresidual}
\begin{aligned}
    \partial_\alpha
    (\phi_j|_{z_j}-\phi_{j-1}|_{z_j})
    &=R_j,
    \\
    R_j
    &=
    -\omega_j
    \Bigl(
        \Bigl(1+\frac A2(\theta_{j-1}+\theta_j)\Bigr)\partial_tz_j+Q_j
        +\frac12(\theta_{j-1}+\theta_j)\, i
    \Bigr)\cdot(\partial_\alpha z_j)^\perp.
\end{aligned}
\end{equation}
Indeed, set

\[
 Y_-=\frac{((\nabla p)_{j,-}+\theta_{j-1} i)
 \cdot(\partial_\alpha z_j)^\perp}{1+A\theta_{j-1}},
 \qquad
 Y_+=\frac{((\nabla p)_{j,+}+\theta_j i)
 \cdot(\partial_\alpha z_j)^\perp}{1+A\theta_j},
\]
where $(\nabla p)_{j,-}$ and $(\nabla p)_{j,+}$ are the traces on $z_j$
from the $\theta_{j-1}$ and $\theta_j$ sides respectively.
The pressure jump
$(\nabla p)_{j,+}-(\nabla p)_{j,-}
=\sigma_j(\partial_\alpha z_j)^\perp/|\partial_\alpha z_j|^2$
and the density prescription give
\[
 Y_+-Y_-=AR_j,\qquad
 (1+A\theta_j)R_j=-\delta_j
 \bigl(\partial_tz_j\cdot(\partial_\alpha z_j)^\perp+Y_-\bigr).
\]
Since the normal components of the fields are
$A\partial_\alpha(\phi_{j-1}|_{z_j})-Y_-$ and
$-\partial_\alpha(\phi_{j-1}|_{z_j})-\theta_{j-1}Y_-$
on the $\theta_{j-1}$ side, and the same expressions with
$\phi_j,\theta_j,Y_+$ on the $\theta_j$ side, their jumps are
$0$ and $\delta_j\partial_tz_j\cdot(\partial_\alpha z_j)^\perp$.
These give both transmission conditions, including when $A=0$;
the regional equations follow from harmonicity of $p$ and Darcy's law.

Set $R=\sum_jR_j$ and use \eqref{e:layercorrected}
with $f_j=\sigma_j$.
This gives
\[
    Q_j\cdot(\partial_\alpha z_j)^\perp
    =
    T_j+\frac12\sum_{\ell=1}^{2N}\operatorname{sgn}(j-\ell)\sigma_\ell.
\]
Substitution of the densities and finite summation give
\begin{equation}\label{e:layerRidentity}
    R
    =
    -\frac{2}{1-A^2}
    \partial_tz\cdot(\partial_\alpha z)^\perp
    -\sum_j\omega_jT_j+tE,
\end{equation}
where $E$ is independent of time.
To see the cancellation, the coefficient of
$\partial_tz_j\cdot(\partial_\alpha z_j)^\perp$ in $-R$
is $\delta_j/(1-A^2)$, while the coefficient of
$i\cdot(\partial_\alpha z_j)^\perp$ is
\[
    \frac{\omega_j}{1-A^2}
    \bigl(
        \theta_{j-1}+\theta_j
        +A(1+\theta_{j-1}\theta_j)
    \bigr).
\]
These latter coefficients sum to zero.
Expanding $z_j=z+tc h_j\tau^\perp$
and using $\sum_j\delta_j h_j=0$ leaves only
time-independent multiples of $t$ besides the displayed terms.

At zero the total density is $\sigma^{(0)}$ and
\[
    T_j(0)
    =
    Q_0[\sigma^{(0)}]\cdot(\partial_\alpha z^\circ)^\perp.
\]
Since $\partial_tz(0)=z^{(1)}=-Q_0[\sigma^{(0)}]$,
\eqref{e:layerRidentity} and
$\sum_j\omega_j=2/(1-A^2)$ give $R(0)=0$.
Compute $r=\partial_tR(0)$ first with $z^{(2)}=0$.
The product estimates and the initial trace differentiation
formula give $r\in H^{k-2}$, as in
Proposition~\ref{p:second}.
Introducing the acceleration changes the derivative of the
total density by $-2A\,z^{(2)}\cdot(\partial_\alpha z^\circ)^\perp$
and that of each corrected trace $T_j$
by $-AP[z^{(2)}\cdot(\partial_\alpha z^\circ)^\perp]$.
Hence

\begin{equation}\label{e:layeracceleration}
    \partial_tR(0)
    =
    r-\frac{2}{1-A^2}(I-AP)\bigl[z^{(2)}\cdot(\partial_\alpha z^\circ)^\perp\bigr].
\end{equation}
Prescribe $z^{(2)}$ as a curve normal to $z^\circ$ such that
\[
    z^{(2)}\cdot(\partial_\alpha z^\circ)^\perp
    =
    \frac{1-A^2}{2}(I-AP)^{-1}r,
    \qquad
    z^{(2)}
    =
    \frac{z^{(2)}\cdot(\partial_\alpha z^\circ)^\perp}
    {|\partial_\alpha z^\circ|^2}
    (\partial_\alpha z^\circ)^\perp.
\]
Proposition~\ref{p:inverse} gives its regularity and
$\partial_tR(0)=0$.
Differentiating \eqref{e:layerRidentity} and subtracting
its initial value, Lemma~\ref{l:layerpressure} handles the
pressure terms and the $L^1$ calculation in
Proposition~\ref{p:transverse} handles the geometric term.
Integrating in time gives
\begin{equation}\label{e:layerRprimitive}
    \sup_\alpha
    \left|\int_0^\alpha R(t,\beta)\,d\beta\right|
    \le Ct^2.
\end{equation}
Here the hypotheses of Lemma~\ref{l:layerpressure}
follow from
\[
    \sigma_j+\delta_j(1+Ac_\infty h_j)
    \in C_t^\infty H^{k-3};
\]
the constant parts are independent of time.

For
\[
    S_j(t,\alpha)
    =
    \int_0^\alpha
    \sum_{\ell\le j}R_\ell(t,\beta)\,d\beta,
\]
define
\begin{equation}\label{e:layerpotentials}
    \phi_j(t,Z(t,\alpha,\lambda))
    =
    S_j(t,\alpha)
    -\frac{\lambda-h_1}{h_{2N}-h_1}
    S_{2N}(t,\alpha),
    \qquad h_j<\lambda<h_{j+1}.
\end{equation}
These potentials satisfy \eqref{e:layerresidual},
including both exterior boundaries.
The only transverse derivative is a constant multiple of
$S_{2N}=O(t^2)$ by \eqref{e:layerRprimitive}.
Since $F_A(\theta_0)=F_A(-1)=0$, the speed definition
\eqref{e:layercurves} telescopes to
\[
    \sum_{\ell\le j}\delta_\ell h_\ell
    =
    -\frac{N}{2N-1}F_A(\theta_j).
\]
Together with \eqref{eq:opening-speed} and the initial
pressure traces, this gives
\[
    \sum_{\ell\le j}R_\ell(0)
    =
    -\frac{
        N\bar c\,\sigma^{(0)}F_A(\theta_j)
    }{(2N-1)(1+A\theta_j)},
\]
and hence
\[
    \nabla^\perp\phi_j
    =
    -\frac{
        N\bar c\,\sigma^{(0)}F_A(\theta_j)
    }{
        (2N-1)|\partial_\alpha z^\circ|(1+A\theta_j)
    }\tau^\perp+O(t).
\]
In the $j$th strip the same Plemelj calculation yields
\[
    \nabla p
    =
    Q_0[\sigma^{(0)}]
    +\frac{\sigma^{(0)}}{2|\partial_\alpha z^\circ|}
    \left(
        \theta_j-\frac{2AN\bar c}{2N-1}F_A(\theta_j)
    \right)\tau^\perp+O(t).
\]
Substituting these expansions into \eqref{e:layerauxiliary},
and using
\[
    (i-AQ_0[\sigma^{(0)}])\cdot\tau^\perp
    =
    -\frac{\sigma^{(0)}}{2|\partial_\alpha z^\circ|},
\]
proves \eqref{e:layerasymptotics} for these fields.

Finally, on each simply connected intermediate strip the field
\[
 \frac{1+A\theta_j}{F_A(\theta_j)}\nabla^\perp\phi_j
 -\frac12(A\bar u+i)
\]
is divergence-free, since $\nabla\cdot\bar u=0$.
It is therefore $\nabla^\perp g_j$ for a scalar potential:
\begin{equation}\label{e:layergconversion}
 \nabla^\perp g_j=
 \frac{1+A\theta_j}{F_A(\theta_j)}\nabla^\perp\phi_j
 -\frac12(A\bar u+i).
\end{equation}
This recovers \eqref{e:layerfields} with the same fields and transmission
conditions.
Indeed, equation \eqref{e:layergconversion} gives
$F_A(\theta_j)\bigl(\nabla^\perp g_j+\frac12(A\bar u+i)\bigr)
=(1+A\theta_j)\nabla^\perp\phi_j$, so the momentum in
\eqref{e:layerfields} is the one in \eqref{e:layerauxiliary};
substituting it into \eqref{eq:velocity} with $\bar\theta=\theta_j$
and $C(\theta_j)=\theta_j-\frac A2F_A(\theta_j)$ gives
the velocity in \eqref{e:layerauxiliary}.
A normalization at a fixed point in strip coordinates
makes $g_j$ continuous in time.
\end{proof}

\appendix
\section{Relaxation of IPM} \label{AppendixA}
In this appendix, we prove Theorems~\ref{thm:relaxation}
and~\ref{thm:bounded-relaxation}. The identities involving the
$\Lambda$-lamination hulls were established in \cite{Mengual22}.
We identify the corresponding $\Lambda$-convex hulls and weak
relaxations by means of a family of $\Lambda$-affine functions
suggested by ChatGPT 6 and the div--curl lemma.
The converse approximation by exact solutions follows from
the $h$-principle in \cite[Theorem~2.1]{Mengual22}.

For $z=(\theta,u,m)$, define the quadratic function
\[
Q(z):=u\cdot(u+Am+\theta e_2).
\]
This function is $\Lambda$-affine. Indeed, if
$\bar z=(\bar\theta,\bar u,\bar m)\in\Lambda$, then
$\bar u$ is orthogonal to the corresponding nonzero
spatial frequency $\xi$, whereas
$\bar u+A\bar m+\bar\theta e_2$ is parallel to $\xi$.
Hence $Q(\bar z)=0$, and the quadratic nature of $Q$
implies that $s\mapsto Q(z+s\bar z)$ is affine
for every $z$.

We also define
\[
g(z):=(1-A\theta)|m-\theta u|^2
+(1-\theta^2)(m-\theta u)\cdot(Au+e_2).
\]
This function characterizes $\overline{\mathcal O}$ through
\begin{equation}\label{eq:g<=0}
z\in\overline{\mathcal O}
\quad\Longleftrightarrow\quad
|\theta|\le1 \quad\text{and}\quad g(z)\le0,
\end{equation}
see \cite[Proposition~2.3]{Szekelyhidi12} for $A=0$ and
\cite[Lemmas~4.1--4.2]{Mengual22} for $|A|<1$.

Following the considerations in \cite[Remark~4.5]{Mengual22},
the second author conjectured that $g(z)/(1-\theta^2)$ is a
$\Lambda$-convex function characterizing the relaxed constraint
for $|\theta|<1$. Motivated by recent advances in AI mathematical
capabilities, we posed this question to ChatGPT~6 out of
curiosity. It suggested expressing this function as the
supremum of a family of $\Lambda$-affine functions $H_w$, an approach we develop and verify below.
For each $w=(w_+,w_-)\in\mathbb R^2\times\mathbb R^2$,
define
\[
H_w(z):=
\sum_{a=\pm1}(1+aA)
\left(u+am-\frac{1+a\theta}{2}w_a\right)\cdot w_a
+m_2-Q(z).
\]
Since all remaining terms are affine, each $H_w$ is
also $\Lambda$-affine. We will show that these functions
are nonpositive on $K$ and characterize
$\overline{\mathcal O}$ through the inequalities
$H_w\le0$.

\begin{lemma}\label{lem:H-on-K}
For $\theta=\pm1$,
\begin{equation}\label{eq:lem:H-on-K}
H_w(\theta,u,\theta u)
=-(1+\theta A)|w_\theta-u|^2.
\end{equation}
Therefore, $H_w\le0$ on $K$.
\end{lemma}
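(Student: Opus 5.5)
Fix $\theta\in\{\pm1\}$ and set $m=\theta u$. The lemma should follow from a direct substitution into the definition of $H_w$ and a completion of squares. Nothing from the rest of the paper is needed beyond the definitions of $Q$ and $H_w$.

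\textbf{Step 1: the quadratic part.} Note that $\theta^2=1$. Then
\[
Q(\theta,u,\theta u)=u\cdot\bigl((1+A\theta)u+\theta e_2\bigr)=(1+A\theta)|u|^2+\theta u_2,
\]
and the affine term is $m_2=\theta u_2$. The $\theta u_2$ contributions therefore cancel in $m_2-Q(z)$, leaving $-(1+\theta A)|u|^2$.

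\textbf{Step 2: the sum over $a$.} For each $a=\pm1$ we have $u+am=(1+a\theta)u$. The summand is then
\[
(1+aA)(1+a\theta)\Bigl(u\cdot w_a-\tfrac12|w_a|^2\Bigr).
\]
The factor $1+a\theta$ equals $2$ when $a=\theta$ and $0$ when $a=-\theta$. So only the term $a=\theta$ survives, and it equals
\[
(1+\theta A)\bigl(2u\cdot w_\theta-|w_\theta|^2\bigr).
\]

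\textbf{Step 3: completing the square.} Adding the two contributions gives
\[
(1+\theta A)\bigl(2u\cdot w_\theta-|w_\theta|^2-|u|^2\bigr)=-(1+\theta A)|w_\theta-u|^2,
\]
which is \eqref{eq:lem:H-on-K}. Since $|A|<1$, we have $1+\theta A>0$, so $H_w\le0$ at every point of $K$, for every $w$.

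The only point that needs care is the bookkeeping of signs in Step 2, namely that the index $a=-\theta$ drops out because $1+a\theta=0$. There is no genuine obstacle.
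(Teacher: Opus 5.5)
Your proposal is correct and follows the paper's proof essentially line by line: substitute $m=\theta u$, note that the $a=-\theta$ summand vanishes because $1+a\theta=0$, and combine the surviving $a=\theta$ term with $m_2-Q(z)=-(1+\theta A)|u|^2$ to complete the square. The sign conclusion then follows from $1+\theta A>0$ for $|A|<1$.
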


\begin{proof}
Let $\theta=\pm1$ and $m=\theta u$. Then
\[
Q(\theta,u,\theta u)
=(1+\theta A)|u|^2+\theta u_2.
\]
Moreover,
\[
u+am-\frac{1+a\theta}{2}w_a
=(1+a\theta)\left(u-\frac12w_a\right).
\]
Thus, the summand in $H_w$ vanishes for $a=-\theta$,
while for $a=\theta$ it equals
$(1+\theta A)(2u-w_\theta)\cdot w_\theta$.
Since $m_2=\theta u_2$, we obtain
\eqref{eq:lem:H-on-K}.
The conclusion follows from $1+\theta A>0$.
\end{proof}

\begin{lemma}\label{lem:sup-H}
For $|\theta|<1$,
\[
\sup_{w\in\mathbb R^2\times\mathbb R^2}H_w(z)
=\frac{g(z)}{1-\theta^2}.
\]
\end{lemma}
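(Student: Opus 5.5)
The plan is to compute the supremum explicitly. For fixed $z$ with $|\theta|<1$, the function $w\mapsto H_w(z)$ is a quadratic polynomial in $w=(w_+,w_-)$. The two components decouple, and for each $a=\pm1$ the quadratic part in $w_a$ is
\[
-(1+aA)\frac{1+a\theta}{2}|w_a|^2 .
\]
Since $|A|<1$ and $|\theta|<1$, both coefficients $(1+aA)(1+a\theta)/2$ are strictly positive. Hence $H_w(z)$ is strictly concave and coercive in $w$, so the supremum is attained at the unique critical point.

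Differentiating in $w_a$ gives
\[
(1+aA)\bigl(u+am-(1+a\theta)w_a\bigr)=0,
\qquad\text{so}\qquad
w_a=\frac{u+am}{1+a\theta}.
\]
Substituting this back, each summand becomes $(1+aA)\,|u+am|^2/\bigl(2(1+a\theta)\bigr)$. Therefore
\[
\sup_w H_w(z)=\sum_{a=\pm1}\frac{1+aA}{2(1+a\theta)}|u+am|^2+m_2-Q(z).
\]

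It remains to identify this with $g(z)/(1-\theta^2)$, which is an algebraic identity. I would substitute $m=\theta u+v$, so that $v=m-\theta u$ is the variable in which $g$ is written. Then $u+am=(1+a\theta)u+av$, and
\[
\frac{|u+am|^2}{1+a\theta}=(1+a\theta)|u|^2+2a\,u\cdot v+\frac{|v|^2}{1+a\theta}.
\]
Weighting by $(1+aA)/2$ and summing over $a$ gives three contributions:
\begin{itemize}
\item the $|u|^2$ terms sum to $(1+A\theta)|u|^2$;
\item the cross terms sum to $2A\,u\cdot v$;
\item the $|v|^2$ terms sum to $\dfrac{1-A\theta}{1-\theta^2}|v|^2$.
\end{itemize}
On the other hand, using $Q(z)=u\cdot(u+Am+\theta e_2)$,
\[
m_2-Q(z)=v_2-(1+A\theta)|u|^2-A\,u\cdot v .
\]
Adding everything, the $|u|^2$ terms cancel and we are left with
\[
A\,u\cdot v+v_2+\frac{1-A\theta}{1-\theta^2}|v|^2
=\frac{(1-A\theta)|v|^2+(1-\theta^2)\,v\cdot(Au+e_2)}{1-\theta^2}
=\frac{g(z)}{1-\theta^2}.
\]

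The only real point is the strict concavity, which guarantees that the critical point is a maximum and that the supremum is finite. This is exactly where $|\theta|<1$ and $|A|<1$ enter. The rest is bookkeeping, and the step most prone to error is the $|v|^2$ coefficient, $\sum_a (1+aA)/(2(1+a\theta))=(1-A\theta)/(1-\theta^2)$, which I would double-check directly.
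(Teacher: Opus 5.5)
Your proof is correct and is essentially the paper's argument. The paper also maximizes each decoupled quadratic in $w_a$, though it completes the square in the auxiliary function $p(t,x,y)=(x-\frac{t}{2}y)\cdot y$ instead of locating the critical point. It reaches the same intermediate formula and then does the same algebra with $v=m-\theta u$, using the identities $\sum_{a}(1+aA)/(1+a\theta)=2(1-\theta A)/(1-\theta^2)$ and $\sum_{a}(1+aA)(1+a\theta)=2(1+\theta A)$, both of which you computed correctly.
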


\begin{proof}
Consider the auxiliary function
\[
p(t,x,y)=\left(x-\frac t2y\right)\cdot y
\]
defined for $t\in\mathbb R$ and
$x,y\in\mathbb R^2$.
For $t>0$, completing the square gives
\[
p(t,x,y)
=
\frac{|x|^2}{2t}
-\frac t2\left|y-\frac xt\right|^2.
\]
Hence, this inverted paraboloid is maximized for
$t>0$ at $y=x/t$, with
\[
\sup_{y\in\mathbb R^2}p(t,x,y)
=\frac{|x|^2}{2t}.
\]
For $t=0$, the supremum is zero if $x=0$ and infinite
otherwise; for $t<0$, it is infinite.

Applying this identity to each term in
\[
H_w(z)
=
\sum_{a=\pm1}(1+aA)
p(1+a\theta,u+am,w_a)
+m_2-Q(z),
\]
we deduce that
\begin{equation}\label{eq:supH:1}
\sup_{w\in\mathbb R^2\times\mathbb R^2}H_w(z)
=
\frac12\sum_{a=\pm1}
\frac{1+aA}{1+a\theta}|u+am|^2
+m_2-Q(z).
\end{equation}
On the one hand, by splitting
\[
m+au=(m-\theta u)+a(1+a\theta)u,
\]
and using the identities
\begin{align*}
\frac{1+A}{1+\theta}
+\frac{1-A}{1-\theta}
&=2\frac{1-\theta A}{1-\theta^2},
&
(1+A)(1+\theta)+(1-A)(1-\theta)
&=2(1+\theta A),
\end{align*}
we get
\[
\frac12\sum_{a=\pm1}
\frac{1+aA}{1+a\theta}|u+am|^2
=
\frac{1-\theta A}{1-\theta^2}|m-\theta u|^2
+2Au\cdot(m-\theta u)
+(1+\theta A)|u|^2.
\]
On the other hand, we decompose
\[
m_2-Q(z)
=
(m-\theta u)\cdot(e_2-Au)
-(1+\theta A)|u|^2.
\]
Combining these identities in \eqref{eq:supH:1},
we obtain
\[
\sup_{w\in\mathbb R^2\times\mathbb R^2}H_w(z)
=
\frac{1-\theta A}{1-\theta^2}|m-\theta u|^2
+(m-\theta u)\cdot(Au+e_2)
=
\frac{g(z)}{1-\theta^2}.
\]
\end{proof}

\begin{corollary}\label{cor:separators}
It holds that
\[
\overline{\mathcal O}
=
\{|\theta|\le1\}
\bigcap_{w\in\mathbb R^2\times\mathbb R^2}
\{H_w(z)\le0\}.
\]
\end{corollary}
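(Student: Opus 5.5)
We have to show
\[
\overline{\mathcal O}=\{|\theta|\le1\}\cap\bigcap_{w}\{H_w\le0\}.
\]
The plan is to reduce everything to the characterization \eqref{eq:g<=0}, namely $z\in\overline{\mathcal O}$ iff $|\theta|\le1$ and $g(z)\le0$. We then compare $\sup_w H_w$ with $g$, separating interior values $|\theta|<1$ from the endpoints $\theta=\pm1$.

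\emph{Interior case $|\theta|<1$.} Here Lemma~\ref{lem:sup-H} gives $\sup_w H_w(z)=g(z)/(1-\theta^2)$ with $1-\theta^2>0$. Hence $H_w(z)\le0$ for all $w$ iff $g(z)\le0$, and \eqref{eq:g<=0} gives the claim for these $z$.

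\emph{Endpoint case $\theta=\pm1$.} First determine $\overline{\mathcal O}\cap\{\theta=\pm1\}$ from \eqref{eq:g<=0}. At $\theta=\pm1$ we have $1-\theta^2=0$ and $1-A\theta>0$, so $g(z)=(1\mp A)|m-\theta u|^2$. Thus $g\le0$ forces $m=\theta u$, and $\overline{\mathcal O}\cap\{\theta=\pm1\}=K$.

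Next compute $\sup_w H_w$ at such $z$ using the function $p$ from the proof of Lemma~\ref{lem:sup-H}. Take $\theta=1$ for definiteness. The $a=+1$ term is $(1+A)p(2,u+m,w_+)$, whose supremum $\frac{1+A}{4}|u+m|^2$ is finite. The $a=-1$ term is $(1-A)p(0,u-m,w_-)=(1-A)(u-m)\cdot w_-$. Its supremum is $+\infty$ unless $m=u$, and $0$ when $m=u$.

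So if $m\neq\theta u$, some $H_w(z)>0$, consistent with $z\notin\overline{\mathcal O}$. If $m=\theta u$, then $z\in K$ and Lemma~\ref{lem:H-on-K} gives $H_w(z)\le0$ for all $w$. The case $\theta=-1$ is symmetric, with the roles of $a=\pm1$ swapped.

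Combining the two cases shows that, for $|\theta|\le1$, the condition $H_w(z)\le0$ for all $w$ is equivalent to $z\in\overline{\mathcal O}$, which proves the corollary. The only delicate point is the endpoint case, where Lemma~\ref{lem:sup-H} does not apply. It is handled by the degenerate branch $t=0$ of the supremum of $p$ already noted in that proof, together with Lemma~\ref{lem:H-on-K}.
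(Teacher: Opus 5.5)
Your proof is correct and essentially the paper's argument: for $|\theta|<1$ you combine Lemma~\ref{lem:sup-H} with \eqref{eq:g<=0}. For $\theta=\pm1$ you use that the $w_{-\theta}$-term is linear, then Lemma~\ref{lem:H-on-K} and $g=(1-\theta A)|m-\theta u|^2$. The only cosmetic difference is that the paper makes the explicit choice $w_\theta=0$, $w_{-\theta}=t(u-\theta m)$ rather than appealing to the degenerate $t=0$ branch of $p$.
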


\begin{proof}
For $|\theta|<1$, Lemma~\ref{lem:sup-H} and \eqref{eq:g<=0} gives
\[
H_w(z)\le0\ \text{for every }w
\quad\Longleftrightarrow\quad
g(z)\le0
\quad\Longleftrightarrow\quad
z\in\overline{\mathcal O}.
\]

For $\theta=\pm1$, the quadratic term in $w_{-\theta}$ vanishes,
so that
$$
H_w(z)
=(1-\theta A)(u-\theta m)\cdot w_{-\theta}
+(1+\theta A)(u+\theta m-w_\theta)\cdot w_\theta
+m_2-Q(z).
$$
Fixing $w_\theta=0$, we see that unless $u-\theta m=0$,
choosing $w_{-\theta}=t(u-\theta m)$ makes
$H_w(z)\to+\infty$ as $t\to+\infty$.
Thus, $H_w(z)\le0$ for every $w$ requires $m=\theta u$.
Conversely, for these states, Lemma~\ref{lem:H-on-K} gives
$H_w(z)\le0$.
Finally, when $\theta=\pm1$,
$g(z)=(1-\theta A)|m-\theta u|^2,$
so $g(z)\le0$ is equivalent to $m=\theta u$.
Hence these are precisely the states of $\overline{\mathcal O}$
with $|\theta|=1$, completing the proof.
\end{proof}

\subsection{Proof of Theorem~\ref{thm:relaxation}}\label{sec:thm:relaxation}

\subsubsection{The $\Lambda$-convex hull}

The identity $K^{\mathrm{lc},\Lambda}=\overline{\mathcal O}$
from \cite{Mengual22} gives
$\overline{\mathcal O}\subset K^\Lambda$.
Conversely, the affine functions $\theta-1$ and
$-\theta-1$, together with the $\Lambda$-affine
functions $H_w$, are nonpositive on $K$.
Therefore, every $z\in K^\Lambda$ satisfies
\[
|\theta|\le1,\qquad H_w(z)\le0
\quad\text{for every }w\in\mathbb R^2\times\mathbb R^2.
\]
Corollary~\ref{cor:separators} then yields
$z\in\overline{\mathcal O}$, proving the claim.

\subsubsection{The weak relaxation}

We first establish the weak stability of the relaxed constraint.

\begin{lemma}\label{lem:weak-stability}
Let $T>0$ and let $B\subset\mathbb R^2$ be a bounded open ball.
Suppose that $z_j\to z$ in $C([0,T];L^2_w(B;\mathbb R^5))$,
and that each $z_j$ solves the linear system \eqref{mIPM} and takes
values in $\overline{\mathcal O}$ almost everywhere in
$(0,T)\times B$. Then $z$ inherits the same properties.
\end{lemma}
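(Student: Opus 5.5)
The linear system \eqref{mIPM} passes to the limit directly. Each equation is linear with constant coefficients, and $z_j\to z$ in $C([0,T];L^2_w(B))$ gives convergence of $z_j$ to $z$ in the sense of distributions on $(0,T)\times B$. Hence $z$ satisfies \eqref{mIPM} distributionally. The real content is the pointwise constraint $z\in\overline{\mathcal O}$ almost everywhere. This is not preserved by weak limits in general, since $\overline{\mathcal O}$ is not convex. I would use the description of Corollary~\ref{cor:separators}:
\[
\overline{\mathcal O}=\{|\theta|\le1\}\cap\bigcap_{w}\{H_w\le0\}.
\]
The constraint $|\theta|\le1$ is convex, so it is preserved under weak convergence: test against nonnegative functions and use $\pm\theta_j-1\le0$.

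For $H_w$, the affine part converges weakly, so everything hinges on the quadratic term
\[
Q(z)=u\cdot(u+Am+\theta e_2).
\]
Set $v_j:=u_j+Am_j+\theta_je_2$. By \eqref{mIPMc}, $v_j=-\nabla p_j$ is curl-free, and by \eqref{mIPMb}, $\operatorname{div}u_j=0$. The div--curl lemma (Murat--Tartar) then gives $u_j\cdot v_j\to u\cdot v$ in distributions on $(0,T)\times B$. Two points need care in applying it.

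First, the lemma needs the spatial div/curl structure on space-time domains. I would apply it slice by slice in time, or in the space-time form with $\operatorname{div}_x$ and $\operatorname{curl}_x$. Since no time derivatives appear, a spatial compensated-compactness argument works: write $v_j=-\nabla p_j$ locally in $x$, with $p_j$ bounded in $L^2_tH^1_x$ on a slightly smaller ball, and use $u_j\cdot\nabla p_j=\operatorname{div}(p_ju_j)$. Strong compactness of $p_j$ in $L^2_{t,x}$ is still needed. Spatial compactness comes from the $H^1_x$ bound. Temporal compactness is the step I expect to be the main obstacle. I would handle it through the formulation in $C([0,T];L^2_w)$: for each fixed $t$, apply the div--curl lemma on $B$ to get $u_j(t)\cdot v_j(t)\to u(t)\cdot v(t)$ in $\mathcal D'(B)$. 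The hypothesis provides weak convergence at every time, with uniform $L^2$ bounds via uniform boundedness. Then integrate in $t$ by dominated convergence, using the uniform bound on $|\int\varphi\,u_j\cdot v_j|$.

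Second, the constraint $z_j\in\overline{\mathcal O}$ does not itself bound $z_j$, but weak convergence in $C([0,T];L^2_w)$ gives $\sup_{t,j}\|z_j(t)\|_{L^2(B)}<\infty$. For local products $u\cdot\nabla p$, $L^2$ bounds suffice in the div--curl lemma.

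Consequently $H_w(z_j)\to H_w(z)$ in $\mathcal D'$ for each fixed $w$. Since $H_w(z_j)\le0$ almost everywhere, testing against nonnegative $\varphi$ yields $H_w(z)\le0$ almost everywhere. Taking a countable dense set of $w$ and using the continuity of $H_w$ in $w$ gives the inequality for all $w$ simultaneously outside a null set. Corollary~\ref{cor:separators} then gives $z\in\overline{\mathcal O}$ almost everywhere, completing the lemma.
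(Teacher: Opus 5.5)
Your proposal is correct and follows the paper's proof. You pass the linear equations to the limit, apply the div--curl lemma on each time slice so that the $\Lambda$-affine quadratic $Q$ converges, deduce that $H_w\le0$ and $|\theta|\le1$ survive the limit, and conclude with Corollary~\ref{cor:separators}; your extra details (the uniform $L^2$ bound via uniform boundedness, dominated convergence in time, and a countable dense set of $w$) fill in steps the paper leaves implicit.
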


\begin{proof}
Since the linear equations pass to the limit in distributions, it remains to check that $z$ takes
values in $\overline{\mathcal O}$ almost everywhere in
$(0,T)\times B$.
Since $\operatorname{div}_x u_j=0$ and
$\operatorname{curl}_x(u_j+Am_j+\theta_j e_2)=0$,
the div--curl lemma gives, for almost every $t$,
\[
Q(z_j(t,\cdot))\rightarrow Q(z(t,\cdot))
\quad\text{in }\mathcal D'(B).
\]
All remaining terms in $H_w$ are affine, so the inequalities
$H_w\le0$ pass to the limit, as does $|\theta|\le1$.
Corollary~\ref{cor:separators} completes the proof.
\end{proof}

Since $K\subset\overline{\mathcal O}$,
Lemma~\ref{lem:weak-stability} gives
$\mathcal R\subset\overline{\mathcal O}$.
Conversely, approximate $z\in\overline{\mathcal O}$ by constant
states $z_n\in\mathcal O$, which belong to a common bounded
perturbation set $\mathcal O_M$ by \cite[Lemma~4.6]{Mengual22},
with $M$ chosen as in \cite[Lemma~4.7]{Mengual22}.
The approximation property of the $h$-principle
\cite[Theorem~2.1]{Mengual22} then yields $K_M$-valued solutions
converging to $z$ in $C([0,T];L^2_w(B;\mathbb R^5))$.
Thus $z\in\mathcal R$, proving the reverse inclusion.

\subsection{Proof of Theorem~\ref{thm:bounded-relaxation}}
\label{sec:thm:bounded-relaxation}

As observed in
\cite{Szekelyhidi12,Mengual22}, the additional constraints \eqref{eq:OM-constraints} are
sublevel sets of a $\Lambda$-affine function in
\eqref{eq:OM-quadratic} and convex functions in
\eqref{eq:OM-minus}--\eqref{eq:OM-plus}. Their non-strict versions hold on $K_M$.
Together with Corollary~\ref{cor:separators} and the identity
$K_M^{\mathrm{lc},\Lambda}=\overline{\mathcal O_M}$
from \cite{Mengual22}, this gives
$K_M^\Lambda=\overline{\mathcal O_M}$ as in the unbounded case.

For the weak relaxation, Lemma~\ref{lem:weak-stability}
preserves the constraint $\overline{\mathcal O}$.
The non-strict version of \eqref{eq:OM-quadratic} passes
to the limit by div--curl, since its left-hand side is
$4Q(z)+4Au_2$; those of
\eqref{eq:OM-minus}--\eqref{eq:OM-plus} pass to the limit
by convexity. Hence $\mathcal R_M\subset\overline{\mathcal O_M}$.

The reverse inclusion follows from the same approximation
argument as above. Indeed, for $z_n\in\mathcal O_M$ converging
to $z\in\overline{\mathcal O_M}$, strictness of the constraints
allows us to choose $1<M_n<M$ with $z_n\in\mathcal O_{M_n}$,
avoiding the exceptional parameter in \cite[Lemma~4.7]{Mengual22}.
Applying the bounded $h$-principle gives approximating
$K_{M_n}$-valued solutions, which are uniformly bounded
and take values in $K_M$. Thus $z\in\mathcal R_M$.\\

\noindent\textbf{Acknowledgements.}
The authors thank \'Angel Castro, Daniel Faraco, Francisco Gancedo and Eduardo Garc\'ia-Ju\'arez for their valuable comments during the preparation of this work.

PLV acknowledges support from the Spanish Government through the FPI grant PRE2022-105212; from the AEI project PID2022-140494NA-I00 (Spain); and from IMAG, funded by MICINN through the María de Maeztu Excellence Grant CEX2020-001105-M/AEI/10.13039/501100011033.

FM acknowledges support from grant RYC2023-045748-I; from grants PID2024-158418NB-I00 and PID2024-158664NB-C2, funded by MCIN/AEI/10.13039/501100011033; and from the IMUS María de Maeztu Unit of Excellence grant CEX2024-001517-M (Apoyo a Unidades de Excelencia María de Maeztu), funded by MICIU/AEI/10.13039/501100011033.\\

\noindent\textbf{Declaration of AI assistance.}
This project was initiated, and the main mathematical arguments
were completed, without the use of AI tools. 
AI assistance was used specifically in the
relaxation of IPM result presented in Appendix~\ref{AppendixA}, which is independent
of the main theorem and is included for completeness. 
AI tools also assisted with the review,
presentation, and typesetting of the manuscript.
The authors take full responsibility for the mathematical content
and the final text.



\bibliographystyle{acm}
\bibliography{references}

\begin{flushleft}
	\quad\\
	Paula Luna-Velasco\\
	\textsc{Universidad de Granada
        \& Universidad de Sevilla
        \\
		18071 Granada, 41012 Sevilla, Spain}\\
	\emph{E-mail address: pluna@us.es} 
\end{flushleft}

\begin{flushleft}
	\quad\\
	Francisco Mengual\\
	\textsc{Universidad de Sevilla, IMUS\\
		41012 Sevilla, Spain}\\
	\emph{E-mail address:} fmengual@us.es
\end{flushleft}

\end{document}